\crefname{equation}{}{}
\pgfplotsset{compat = newest}
\tikzset{>={Latex[width=3mm,length=3mm]}}
\tikzstyle{blue0} = [rectangle, rounded corners, minimum width=2.2cm, minimum height=1.3cm,text centered, draw=black, fill=MidnightBlue!3, text width=4.2cm]
\tikzstyle{blue0b} = [rectangle, rounded corners, minimum width=2.2cm, minimum height=1.2cm,text centered, draw=black, fill=MidnightBlue!3, text width=7.8cm]
\tikzstyle{blue0bb} = [rectangle, rounded corners, minimum width=2.2cm, minimum height=1.6cm,text centered, draw=black, fill=MidnightBlue!3, text width=12cm]
\tikzstyle{blue0d} = [rectangle, rounded corners, minimum width=2.2cm, minimum height=1.2cm,text centered, draw=black, fill=MidnightBlue!3, text width=2cm]
\tikzstyle{blue00} = [rectangle, rounded corners, minimum width=2.2cm, minimum height=1.8cm,text centered, draw=black, fill=MidnightBlue!3, text width=7cm]
\tikzstyle{blue00c} = [rectangle, rounded corners, minimum width=2.2cm, minimum height=1.2cm,text centered, draw=black, fill=MidnightBlue!3, text width=7cm]
\tikzstyle{blue000} = [rectangle, rounded corners, minimum width=2.2cm, minimum height=1.8cm,text centered, draw=black, fill=MidnightBlue!3, text width=16.5cm]
\tikzstyle{blue} = [rectangle, text centered, rounded corners, minimum width=0.6cm, minimum height=0.4cm, draw=white, fill=white, text width=10cm]
\tikzstyle{bluea} = [rectangle, rounded corners, minimum width=2cm, minimum height=1.2cm,text centered, draw=black, fill=MidnightBlue!3, text width=2.5cm]
\tikzstyle{blueb} = [rectangle, rounded corners, minimum width=3.5cm, minimum height=1.2cm,text centered, draw=black, fill=MidnightBlue!3, text width=4cm]
\tikzstyle{bluec} = [rectangle, rounded corners, minimum width=5.6cm, minimum height=1.2cm,text centered, draw=black, fill=MidnightBlue!3, text width=5.8cm]
\tikzstyle{arrow} = [thick,->]
\numberwithin{equation}{section}
\theoremstyle{plain}
\newtheorem{thrm}{Theorem}[section]
\newtheorem{lmm}[thrm]{Lemma}
\newtheorem{ssmptn}[thrm]{Assumption}
\theoremstyle{definition}
\newtheorem{xmpl}[thrm]{Example}
\newtheorem{rmrk}[thrm]{Remark}
\newtheorem{prblm}[thrm]{Problem}
\theoremstyle{plain}
\newcommand{\xvec}[1]{\bm{#1}}
\newcommand{\xsym}[1]{\bm{#1}}
\newcommand{\xdop}[1]{\bm{\mathrm{#1}}}
\def\xnab{\xdop{\nabla}}
\newcommand{\xwcurl}[1]{\xdop{\nabla}\wedge{#1}}
\newcommand{\xdiv}[1]{\xdop{\nabla}\cdot{#1}}
\newcommand{\xmcal}[1]{\bm{\mathcal{#1}}}
\newcommand{\xdx}[1]{{{\rm d}#1}}
\def\xdrv#1#2{\frac{{\rm d}#1}{{\rm d}#2}}
\def\cf{\emph{cf.\/}}\def\ie{\emph{i.e.\/}}\def\eg{\emph{e.g.\/}}
\def\@Rref#1{\hbox{\rm \ref{#1}}}
\def\Rref#1{\@Rref{#1}}
\def\xCzero{{\rm C}^{0}}
\def\xCone{{\rm C}^{1}} 
\def\xCinfty{{\rm C}^{\infty}} 
\def\xCn#1{{\rm C}^#1}
\def\xWn#1{{\rm W}^#1}
\def\xLtwo{{\rm L}^{2}}
\def\xLinfty{{\rm L}^{\infty}} 
\def\xLn#1{{\rm L}^#1}
\def\xU{\mathrm{U}}
\def\xX{{\rm X}}
\def\xY{{\rm Y}}
\def\xZ{{\rm Z}}
\def\xB{{\rm B}}
\def\xD{\mathrm{D}}
\def\cD{\mathcal{D}}
\def\xS{{\rm S}}
\newcommand*{\toccontents}{\@starttoc{toc}}
\begin{document}\linespread{1.05}\selectfont
	\date{}
	
	\author{Manuel~Rissel\,\protect\footnote{NYU-ECNU Institute of Mathematical Sciences at NYU Shanghai, 3663 Zhongshan Road North, Shanghai, 200062, China, e-mail: \href{mailto:manuel.rissel@nyu.edu}{Manuel.Rissel@nyu.edu}}}
	
	\title{Exact controllability of incompressible ideal magnetohydrodynamics in $2$D}
	
	\maketitle

	\begin{abstract}
		This work examines the controllability of planar incompressible ideal magnetohydrodynamics (MHD). Interior controls are obtained for problems posed in doubly-connected regions; simply-connected configurations are driven by boundary controls.  Up to now, only straight channels regulated at opposing walls have been studied. Hence, the present program adds to the literature an exploration of interior controllability, extends the known boundary controllability results, and contributes ideas for treating general domains. To transship obstacles stemming from the MHD coupling and the magnetic field topology, a divide-and-control strategy is proposed. This leads to a family of nonlinear velocity-controlled sub-problems which are solved using J.-M. Coron's return method. The latter is here developed based on a reference trajectory in the domain's first cohomology space. 
	\end{abstract}

	\noindent\textbf{Keywords:} exact controllability, ideal MHD, incompressible inviscid fluid, magnetic field, return method, divide-and-control
	
	\noindent{\bf MSC2020:}  35Q35, 76B75, 76W05, 93B05, 93B18, 93C10

	\setcounter{tocdepth}{2}
	\toccontents 
	\normalsize
	\newgeometry{margin=1.5545in}
	
	\section{Introduction}\label{section:introduction}
	We investigate the global exact controllability of incompressible ideal magnetohydrodynamics (MHD) in $2$D. The model combines Euler's system for perfect fluids and Maxwell's description of electromagnetism (\cf~\cite{Schmidt,Secchi1993}); regulating such highly conductive media concerns applications along with theory. Motivations also originate from controllability problems raised by J.-L. Lions during the 1990s (\cf~\cite{LionsJL1991,Lions1997}), including inviscid flows, as treated in  \cite{Coron1996EulerEq,Glass2000}, and the conjectured global approximate controllability of the Navier--Stokes system (\cf~\cite{CoronMarbachSueurZhang2019,CoronMarbachSueur2020}). Compared with incompressible Euler, incompressible ideal MHD introduces additional difficulties such as crossing characteristics and structural requirements on the controls, and the present article continues the works \cite{RisselWang2021, KukavicaNovackVicol2022} which have studied these issues exclusively in straight channel domains.

	The present objectives are outlined using interior controls. Let~$\cD\subset\mathbb{R}^2$ be a container with perfectly conducting impermeable walls and outward unit normal~$\xvec{n}_{\cD}$. Moreover, $\mathbb{I}_{\omegaup}$ denotes the indicator function of a nonempty relatively open set~$\omegaup \subset \overline{\cD}$. A plasma spreading over~$\cD$, having velocity~$\xvec{u}$, interacting with the magnetic field~$\xvec{B}$, and exerting a total pressure~$p$, is assumed to obey the incompressible ideal MHD system
	\begin{equation}\label{equation:1}
		\begin{gathered}
				\partial_t \xvec{u} + (\xvec{u} \cdot \xsym{\nabla}) \xvec{u} - (\xvec{B} \cdot 	\xsym{\nabla})\xvec{B} + \xsym{\nabla} p = \mathbb{I}_{\omegaup}\xsym{\xi},\\
				\partial_t \xvec{B} + (\xvec{u} \cdot \xsym{\nabla}) \xvec{B} - (\xvec{B} \cdot 	\xsym{\nabla}) \xvec{u} = \mathbb{I}_{\omegaup}\xsym{\eta},\\
				\xdiv{\xvec{u}} = \xdiv{\xvec{B}} = 0,\\
			\xvec{u}_{|_{\partial\cD}} \cdot \xvec{n}_{\cD} = 0, \quad  \xvec{B}_{|_{\partial\cD}} \cdot \xvec{n}_{\cD} = 0,
		\end{gathered}
	\end{equation}
	where $\xsym{\xi}$ and $\xsym{\eta}$ are the controls we wish to exploit to steer \eqref{equation:1}'s solutions towards prescribed target states. In other words, given $T > 0$ and $\xvec{u}_0$, $\xvec{u}_T$, $\xvec{B}_0$, $\xvec{B}_T$, under which assumptions are there controls~$\xsym{\xi}$ and $\xsym{\eta}$ so that the solution to \eqref{equation:1} transitions from the initial state $(\xvec{u}_0, \xvec{B}_0)$ to the target state $(\xvec{u}_T, \xvec{B}_T)$ in time~$T$?

	So far, the controllability of ideal MHD is only known for straight $2$D channels forced through the boundary conditions at two opposed walls (\cf~\cite{RisselWang2021,KukavicaNovackVicol2022}). This sparsity of available results parallels several difficulties linked to the nonlinear coupling effects in ideal MHD, \eg, regularity loss issues and the consequential reliance on symmetrizations that are not directly amenable to techniques designed for pure fluids. Besides, as seen by taking in the second line of \eqref{equation:1} the divergence and normal trace, the localized magnetic field control has to satisfy
	\begin{equation}\label{equation:intrprctrl}
		\xdiv{(\mathbb{I}_{\omegaup}\xsym{\eta})} = 0, \quad (\mathbb{I}_{\omegaup}\xsym{\eta})_{|_{\partial\cD}} \cdot \xvec{n}_{\cD} = 0.
	\end{equation}

	The current approach features a divide-and-conquer paradigm that produces nonlinear sub-problems solved with the velocity being close to a curl-free flushing profile. This agenda involves trajectory splitting arguments  and a MHD adaptation of J.-M. Coron's return method (\cf~\cite[Part 2, Chapter 6]{Coron2007}). Ideas come also from \cite{RisselWang2021,KukavicaNovackVicol2022,CoronMarbachSueur2020,RisselWang2022}.

	Despite that the global regularity properties of solutions to ideal MHD remain in general unknown (\cf~\cite{Wu2018}), here, like in \cite{RisselWang2021,KukavicaNovackVicol2022}, any risk of blow-up is ruled out by the controls. Such observations are not uncommon; see \cite{Glass2000,Nersisyan2011} regarding three-dimensional Euler problems.

	\paragraph{Notations.} 	The $\xLtwo(\cD;\mathbb{R}^n)$-inner product with $n \in \mathbb{N}$ is abbreviated by $\langle\cdot,\cdot\rangle_{\xLtwo(\cD;\mathbb{R}^n)}$. Given $\xS \subset \partial \cD$ and $\alpha \in (0,1)$, the divergence-free vector fields tangential at $\partial\cD\setminus\xS$ having H\"older continuous derivatives up to the order $l \in \mathbb{N}$ are collected in
	\[
		\xCn{{l,\alpha}}_{*}(\overline{\cD}, \xS; \mathbb{R}^2) \coloneq \left\{ \xvec{f} \in \xCn{{l,\alpha}}(\overline{\cD};\mathbb{R}^2) \, \left| \right. \, \xdiv{\xvec{f}} = 0 \mbox{ in } \cD, \, \xvec{f} \cdot \xvec{n}_{\cD} = 0 \mbox{ on } \partial \cD \setminus \xS \right\},
	\]
	where $\xCn{{l,\alpha}}(\overline{\cD};\mathbb{R}^n)$ is equipped with the norm 
	\[
		\|\xvec{f}\|_{l, \alpha,\cD} \coloneq \sum\limits_{0\leq|\xsym{\xsym{\beta}}|\leq l} \sup\limits_{\xvec{a} \in \cD}|\partial^{\xsym{\beta}}\xvec{f}(\xvec{a})| + \sum\limits_{|\xsym{\beta}| = l} \sup\limits_{\substack{\xvec{a}, \xvec{b} \in \cD \\ \xvec{a}\neq \xvec{b}}}\frac{|\partial^{\xsym{\beta}}\xvec{f}(\xvec{a})-\partial^{\xsym{\beta}}\xvec{f}(\xvec{b})|}{|\xvec{a}-\xvec{b}|^{\alpha}}.
	\]
	In this article, the set $\cD$ is either simply-connected and $\xS \neq \emptyset$, or $\cD$ is doubly-connected and $\xS = \emptyset$. Especially, any $\xvec{f} = [f_1, f_2] \in \xCn{{l,\alpha}}_{*}(\overline{\cD},\xS; \mathbb{R}^2)$ admits a stream function representation 
	\[
		\xvec{f} = \xnab^{\perp} \phi = \begin{bmatrix}
			\partial_2 \phi \\ -\partial_1 \phi
		\end{bmatrix}.
	\]
	Owing to the Gau\ss-Green theorem, such a $\phi$ can be unambiguously defined by a path integral (\cf~\cite{MarchioroPulvirenti1994})
	\[
		 \phi(\xvec{x}) \coloneq - \int_{C_{\xvec{o}\xvec{x}}} \xvec{f}^{\perp} \cdot \xdx{l} + \operatorname{constant},
	\]
	where $\xvec{f}^{\perp} \coloneq [f_2, -f_1]$ and $C_{\xvec{o}\xvec{x}}$ is a curve connecting the reference point $\xvec{o}$ with $\xvec{x}$. Moreover, the potential $\phi$ solves Poisson's equation
	\[
		-\Delta \phi = \xwcurl{\xvec{f}} \coloneq \partial_1 f_2 - \partial_2 f_1, \quad \phi_{|_{\partial\cD\setminus\xS}} = \mbox{ piecewise constant.}
	\]

	\subsection{Interior controllability}\label{subsection:introinterior}
	Let $\mathcal{E} \subset \mathbb{R}^2$ represent the region enclosed by two nested non-intersecting smooth Jordan curves $\Gamma^0$ and $\Gamma^1$, writing $\xvec{n}_{\mathcal{E}}$ for the outward unit normal at $\partial \mathcal{E}$ (\cf~\Cref{Figure:annulusexmpl}). The control zone~$\omegaup \subset \overline{\mathcal{E}}$ is relatively open and some $\Lambda \subset \omegaup$ must render $\overline{\mathcal{E}}\setminus \Lambda$ simply-connected.
	
	It is observed in \Cref{subsection:setup} that solutions to MHD problems like \eqref{equation:1}, driven by our controls, satisfy ${\rm d}/{\rm d}t \langle\xvec{B}(\cdot,t), \xvec{Q}\rangle_{\xLtwo(\mathcal{E};\mathbb{R}^2)} = 0$
	for all $\xvec{Q} \in \xLtwo(\mathcal{E}; \mathbb{R}^2)$ obeying 
	\begin{equation}\label{equation:divcurl0}
		\xwcurl{\xvec{Q}} = 0 \mbox{ in } \mathcal{E}, \quad \xdiv{\xvec{Q}} = 0 \mbox{ in } \mathcal{E}, \quad \xvec{Q} \cdot \xsym{n}_{\mathcal{E}} = 0 \mbox{ on } \partial \mathcal{E}.
	\end{equation}
	Here, similar to \cite{KukavicaNovackVicol2022}, only magnetic fields with vanishing first cohomology projection are considered:
	\begin{equation}\label{equation:conditionq}
		\langle \xvec{B}_0, \xvec{Q}\rangle_{\xLtwo(\mathcal{E};\mathbb{R}^2)} = \langle \xvec{B}_T, \xvec{Q}\rangle_{\xLtwo(\mathcal{E};\mathbb{R}^2)} = 0 \, \mbox{ for all } \xvec{Q} \in \xLtwo(\mathcal{E}; \mathbb{R}^2) \mbox { with \eqref{equation:divcurl0}}.
	\end{equation}
	This is a mild restriction, recalling that the solutions to \eqref{equation:divcurl0} span an one-dimensional subspace (\cf~\cite[Chapter IX]{DautrayLions1990}).
	
	\begin{xmpl}
		Let $0 < r_1 < r_2$ and consider the annulus $\mathcal{E} \coloneq \{ r_1 < |\xvec{x} | < r_2 \}$. If one takes $\xvec{g} \coloneq \xnab^{\perp} \ln |\cdot| \in\xLtwo(\mathcal{E};\mathbb{R}^2)$, then $\xvec{Q} \coloneq \xvec{g}$ solves \eqref{equation:divcurl0} and it holds
		\[
			\langle\xvec{g}, \xvec{Q}\rangle_{\xLtwo(\mathcal{E};\mathbb{R}^2)} = \langle\xvec{g}, \xvec{g}\rangle_{\xLtwo(\mathcal{E};\mathbb{R}^2)} \neq 0.
		\]
	\end{xmpl}
	
	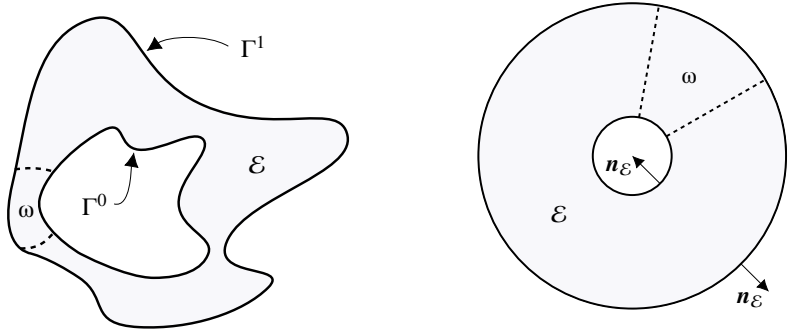
\begin{figure}[ht!]
		\centering
		\resizebox{0.83\textwidth}{!}{
			\begin{subfigure}[b]{0.5\textwidth}
				\centering
				\resizebox{1\textwidth}{!}{
					\begin{tikzpicture}
						\clip(0.4,-1.1) rectangle (7.5,5.3);

						\draw [line width=0.5mm, color=black, fill=MidnightBlue!3] plot[smooth, tension=1] coordinates {(1,0.5) (0.65,1.9) (2.1,5.2) (4.5,3.5) (7.4,2.88) (5,0.8) (5.8,-0.2) (3.2,-1) (2,0) (1,0.5)};
						
						\draw [dashed, line width=0.5mm, color=black] plot[smooth, tension=1] coordinates {(0.81,0.6) (1.25,0.7) (1.6,1.1)};
						\draw [dashed, line width=0.5mm, color=black] plot[smooth, tension=1] coordinates {(0.7,2.2) (1.3,2.2) (1.85,2)};

						\draw [line width=0.5mm, color=black, fill=white]  plot[smooth, tension=0.8] coordinates {(2,0.5) (1.2,1.6) (2.5,3) (3.2,2.6) (4.5,2.8) (4,1.5) (4.6,0.5) (3.5,0)  (2,0.5)};
						
						\coordinate[label=left:\Large{$\mathcal{E}$}] (A) at (5.9,2.3);
						\coordinate[label=left:\Large{$\omegaup$}] (B) at (1.25,1.35);
						
						\coordinate[label=left:\Large{$\Gamma^0$}] (C) at (2.7,1.5);
						\coordinate[label=left:\Large{$\Gamma^1$}] (D) at (5.9,4.7);
						
						\draw[line width=0.2mm, ->] (2.7,1.5)  to [out=-15,in=-100,looseness=0.8] (3.1, 2.55);
						\draw[line width=0.2mm, ->] (5,4.7)  to [out=150,in=35,looseness=0.8] (3.35, 4.6);
					\end{tikzpicture}
				}
				\label{Figure:e1}
			\end{subfigure}
			\quad \quad \quad \quad 
			\begin{subfigure}[b]{0.5\textwidth}
				\centering
				\resizebox{1\textwidth}{!}{
					\begin{tikzpicture}
						\clip(-4.5,-4.5) rectangle (4.5,4.5);
						\draw[line width=0.1mm, color=black] (80:1) coordinate (A) arc (80:30:1) coordinate (C)  (30:3.9) coordinate (D) arc (30:80:3.9) coordinate (B);
						\draw[line width=0.1mm, color=black] (80:1) arc (80:395:1) (2:3.9) arc (2:395:3.9);
						
						\draw [line width=0.5mm, color=black, fill=MidnightBlue!3] (0,0) circle (3.9);
						\draw [line width=0.5mm, color=black, fill=white] (0,0) circle (1);
						
						\draw[dashed,line width=0.5mm, color=black] (A) -- (B);
						\draw[dashed,line width=0.5mm, color=black] (C) -- (D);
						
						\draw[->,line width=0.2mm, color=black] (3.9*0.707,-3.9*0.707) -- (4.9*0.707,-4.9*0.707);
						\draw[->,line width=0.2mm, color=black] (0.707,-0.707) -- (0,0);
						
						\coordinate[label=left:\LARGE{$\mathcal{E}$}] (A) at (-1.5,-1.5);
						\coordinate[label=left:\LARGE{$\omegaup$}] (B) at (1.8,2);
						\coordinate[label=left:\LARGE{$\xvec{n}_{\mathcal{E}}$}] (C) at (4.9*0.707,-5.2*0.707);
						\coordinate[label=left:\LARGE{$\xvec{n}_{\mathcal{E}}$}] (D) at (0.15,-0.3);
						
					\end{tikzpicture}
				}
				\label{Figure:e2}
		\end{subfigure}}
		\caption{Two examples of a doubly-connected domain $\mathcal{E}$ and an admissible control region $\omegaup \subset \overline{\mathcal{E}}$.}
		\label{Figure:annulusexmpl}
	\end{figure}

	As stated next, we achieve the global (large data) exact interior controllability of incompressible ideal MHD. Throughout, the parameters $m \geq 2$ and $\alpha \in (0,1)$ are fixed.
	\begin{thrm}[Main result I]\label{theorem:maininterior}
		Given $T > 0$ and $\xvec{u}_0, \xvec{B}_0, \xvec{u}_T, \xvec{B}_T \in \xCn{{m,\alpha}}_{*}(\overline{\mathcal{E}}, \emptyset; \mathbb{R}^2)$ with~\eqref{equation:conditionq}, there exist controls
		\begin{gather}
			\xsym{\xi}, \xsym{\eta} \in \xLinfty((0,T);\xCn{{m-1,\alpha}}(\overline{\mathcal{E}}; \mathbb{R}^2))\label{equation:controlsregularity}
		\end{gather}
		such that the incompressible ideal MHD problem
		\begin{equation}\label{equation:MHD00interior}
			\begin{cases}
				\partial_t \xvec{u} + (\xvec{u} \cdot \xsym{\nabla}) \xvec{u} - (\xvec{B} \cdot \xsym{\nabla})\xvec{B} + \xsym{\nabla} p = \mathbb{I}_{\omegaup}\xsym{\xi} & \mbox{ in } \mathcal{E} \times (0,T),\\
				\partial_t \xvec{B} + (\xvec{u} \cdot \xsym{\nabla}) \xvec{B} - (\xvec{B} \cdot \xsym{\nabla}) \xvec{u} = \mathbb{I}_{\omegaup}\xsym{\eta} & \mbox{ in } \mathcal{E} \times (0,T),\\
				\xdiv{\xvec{u}} = \xdiv{\xvec{B}} = 0  & \mbox{ in } \mathcal{E} \times (0,T),\\
				\xvec{u} \cdot \xvec{n}_{\mathcal{E}} = \xvec{B} \cdot \xvec{n}_{\mathcal{E}} = 0  & \mbox{ on } \partial \mathcal{E} \times(0,T),\\
				\xvec{u}(\cdot, 0)  =  \xvec{u}_0,\, \xvec{B}(\cdot, 0)  =  \xvec{B}_0  & \mbox{ in } \mathcal{E}
			\end{cases}
		\end{equation}
		admits a unique solution
		\begin{equation*}
			\begin{gathered}
				\xvec{u}, \xvec{B} \in  \xCn{0}([0,T];\xCn{{m-1,\alpha}}(\overline{\mathcal{E}}; \mathbb{R}^2)) \cap \xLinfty((0,T);\xCn{{m,\alpha}}(\overline{\mathcal{E}}; \mathbb{R}^2)), \\
				p \in \xCzero([0,T];\xCn{{m-1,\alpha}}(\overline{\mathcal{E}};\mathbb{R})) \cap \xLinfty((0,T);\xCn{{m,\alpha}}(\overline{\mathcal{E}};\mathbb{R}))
			\end{gathered}
		\end{equation*}
		satisfying the target constraints
		\begin{equation*}\label{equation:MHD-Endconditionint}
			\begin{aligned}
			\xvec{u}(\cdot, T)  =  \xvec{u}_T, \quad \xvec{B}(\cdot, T)  =  \xvec{B}_T.
			\end{aligned}
		\end{equation*}
	\end{thrm}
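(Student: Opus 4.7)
The plan is to combine a divide-and-control decomposition with a stream-function reformulation and J.-M.~Coron's return method. Because $\xvec{u}$ and $\xvec{B}$ are both divergence-free and tangential to $\partial\mathcal{E}$, I would write $\xvec{u} = \xnab^{\perp}\psi$, $\xvec{B} = \xnab^{\perp}\varphi$ and reduce~\eqref{equation:MHD00interior} to a scalar problem in $(\psi,\varphi)$. The induction equation becomes a pure transport equation for $\varphi$ along $\xvec{u}$, and the structural constraint~\eqref{equation:intrprctrl} on $\xsym{\eta}$ translates into the freedom of choosing a scalar forcing supported near $\omegaup$. This makes the splitting transparent: the velocity transports fluid particles, while the magnetic content is merely advected and can be reset inside the control zone.

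My first concrete step is to regard~\eqref{equation:MHD00interior} as a family of velocity-controlled sub-problems: rather than designing $\xsym{\xi}$ and $\xsym{\eta}$ directly, I would prescribe a desired velocity $\xvec{u}$ on $[0,T]$ and then read off the required $\xsym{\xi}$ and $\xsym{\eta}$ from the two evolution equations, projecting the resulting right-hand sides onto localized admissible fields. The reference velocity $\overline{\xvec{u}}$ around which I linearize is furnished by the return method: it vanishes at $t=0$ and $t=T$, remains tangential to $\partial\mathcal{E}$, and has a flow map that pushes every point of $\overline{\mathcal{E}}$ through a subset $\Lambda \subset \omegaup$ with $\overline{\mathcal{E}} \setminus \Lambda$ simply-connected. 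Because $\mathcal{E}$ is doubly-connected, no curl-free $\overline{\xvec{u}}$ can achieve this without violating tangency on $\Gamma^{0}$ and $\Gamma^{1}$ simultaneously; I therefore build $\overline{\xvec{u}}$ from a nontrivial element of the one-dimensional cohomology space identified by~\eqref{equation:divcurl0}, and only localize it via cut-offs near $\partial\mathcal{E} \cup \Lambda$ so that in $\overline{\mathcal{E}} \setminus \omegaup$ it remains essentially harmonic.

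Next I would split $[0,T]$ into three stages. On an initial sub-interval, the flushing property of $\overline{\xvec{u}}$ ensures that characteristics carry the initial stream function $\varphi_0$ into $\Lambda$, where a scalar forcing annihilates $\varphi$; the cohomology orthogonality~\eqref{equation:conditionq} is exactly what makes this reset compatible with the conservation law $\xdrv{}{t}\langle\xvec{B}(\cdot,t),\xvec{Q}\rangle_{\xLtwo(\mathcal{E};\mathbb{R}^2)} = 0$ announced in \Cref{subsection:introinterior}. A symmetric backward construction on a terminal sub-interval prepares a trajectory ending at $(\xvec{u}_T,\xvec{B}_T)$, and in the middle stage the velocity is steered between the two intermediate states using Euler-type controllability arguments in the spirit of~\cite{Coron1996EulerEq,Glass2000}, after absorbing $(\xvec{B}\cdot\xsym{\nabla})\xvec{B}$ into the total pressure. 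The nonlinear coupling is then closed by a Schauder fixed-point argument in $\xCzero([0,T];\xCn{{m-1,\alpha}}(\overline{\mathcal{E}};\mathbb{R}^2))$, with H\"older regularity propagated by standard transport estimates because $\overline{\xvec{u}}$ is smooth and nearly curl-free off $\omegaup$.

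The main obstacle is the joint design of the reference trajectory and the flushing phase. Unlike the straight channels of~\cite{RisselWang2021,KukavicaNovackVicol2022}, where a uniform cross-flow trivially crosses the domain, here one must (i) produce a cohomology-valued $\overline{\xvec{u}}$ whose flow map pushes all particles through $\omegaup$ while respecting tangency on both $\Gamma^{0}$ and $\Gamma^{1}$, (ii) prevent the added harmonic component from generating uncontrolled Lorentz feedback $(\xvec{B}\cdot\xsym{\nabla})\overline{\xvec{u}}$ that would break the fixed-point iteration, and (iii) ensure that the zero cohomology class of $\xvec{B}(t)$ dictated by~\eqref{equation:conditionq} is preserved throughout and not reintroduced by the localized control $\xsym{\eta}$. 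Surmounting (i)--(iii) is the technical core of the argument and justifies the phrase \emph{reference trajectory in the domain's first cohomology space} advertised in the abstract.
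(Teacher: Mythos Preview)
Your high-level plan captures several correct ingredients (cohomology-valued return-method profile, flushing via characteristics, forward/backward gluing), but there are genuine gaps that would prevent the argument from closing.

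First, you work directly on $[0,T]$ with arbitrary data, whereas the paper's proof hinges on a \emph{local-implies-global} reduction (Section~\ref{section:hysc}): one rescales $(\xvec{u}_0,\xvec{B}_0,\xvec{u}_T,\xvec{B}_T)$ by a small $\epsilon$ so that the data become $\delta$-small, applies a local exact \emph{null} controllability result (Theorem~\ref{theorem:local}) forward from the initial state and backward from the target, and glues through zero in the middle. Without this reduction you would have to run the return-method fixed point with large data, and the required closeness to $\xvec{y}^*$ simply fails.

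Second, your statement that ``no curl-free $\overline{\xvec{u}}$ can achieve this without violating tangency on $\Gamma^0$ and $\Gamma^1$ simultaneously'' is incorrect: on a doubly-connected domain the space $\xZ(\mathcal{E})$ in \eqref{equation:Z} is one-dimensional and furnishes precisely a nonvanishing curl-free, divergence-free, tangential field whose integral curves are closed and encircle the cavity. The paper uses this field \emph{as is}, multiplied only by a time profile $\lambda(t)$ (Lemma~\ref{lemma:flushing}); no spatial cut-off of $\xvec{y}^*$ is performed, and its being everywhere curl-free is exactly what makes the magnetic-field problem well-posed even inside $\omegaup$.

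Third, a single flushing stage does not suffice, and a Schauder fixed point in the original variables would lose regularity. The paper overcomes both: (a) it symmetrizes via Elsasser variables $\xvec{z}^\pm = \xvec{V}\pm\xvec{H}$ and runs a Banach contraction on them (Section~\ref{subsection:constr}), which is what avoids the derivative loss inherent in the MHD coupling; (b) it does \emph{not} annihilate $\xvec{B}$ in one pass but iterates $K$ times (Theorem~\ref{theorem:tkloc} and Section~\ref{subsubsection:piecingtogether}), at each step solving an \emph{uncontrolled} induction equation near $\xvec{y}^*$, then splitting the resulting $\xvec{H}$ via a partition of unity and killing only the piece supported near $\Lambda$ by multiplying its stream function by a time cut-off $\beta(t)$. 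This produces an $\xsym{\eta}$ automatically of the form $\xnab^{\perp}\phi$ with $\phi=0$ on $\partial\mathcal{E}$, hence satisfying \eqref{equation:intrprctrl}; the vanishing set then propagates downstream by the frozen-in-flux property, and after $K$ iterations covers $\overline{\mathcal{E}}$. Your ``scalar forcing annihilates $\varphi$'' does not explain how to manufacture such an $\xsym{\eta}$ compatible with the constraints, nor why one pass through $\omegaup$ would be enough when the transported zero-set can be overwritten by nonzero field circulating around the cavity (cf.\ Figure~\ref{Figure:flushing0}).
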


	\begin{rmrk}\label{remark:uniqueness}
		After $\xsym{\xi}$ and $\xsym{\eta}$ are found (and fixed), the uniqueness of solutions to~\eqref{equation:MHD00interior} follows via energy estimates (\cf~\cite[Remark 1.4]{RisselWang2021}). Existence, uniqueness, and continuous dependence results for \eqref{equation:MHD00interior} are also provided by \cite{Schmidt,Secchi1993}.
	\end{rmrk}

	\begin{prblm}[See also \cite{KukavicaNovackVicol2022}]\label{problem:i}
		Does \Cref{theorem:maininterior} hold when $\langle \xvec{B}_0 - \xvec{B}_T, \xvec{Q}\rangle_{\xLtwo(\mathcal{E};\mathbb{R}^2)} = 0$ but $\langle \xvec{B}_0, \xvec{Q}\rangle_{\xLtwo(\mathcal{E};\mathbb{R}^2)} \neq 0$ for all $\xvec{Q}\neq\xsym{0}$ with \eqref{equation:divcurl0}?
	\end{prblm}

	\begin{prblm}\label{problem:p1}
		What are the optimal assumptions on $\omegaup \subset \mathcal{E}$? 
	\end{prblm}

	\begin{rmrk}\label{remark:rr}
		For several reasons, the current approach relies on the geometric constraints imposed on $\omegaup$.
		\begin{itemize}
			\item The integral curves of the return method profile (as selected in \Cref{subsection:flushingprofile}) form a homotopy between $\Gamma^0$ and $\Gamma^1$.
			\item The velocity control acts on the first cohomology part of the momentum equation (\cf~\Cref{subsubsection:sfp}).
			\item In view of Kelvin's law, it is necessary that $\omegaup \cap \Gamma^0 \neq \emptyset$ and $\omegaup \cap \Gamma^1 \neq \emptyset$ (\cf~\cite{Coron1996EulerEq}).
		\end{itemize}
	\end{rmrk}

	\subsection{Boundary controllability}\label{subsection:boundarycontrol}
	Let $\Omega \subset \mathbb{R}^2$ be the simply-connected region enclosed by a Jordan curve $\Gamma$ with outward unit normal $\xsym{n}_{\Omega} = [n_1, n_2]$. 
	The controls act on a portion $\Gamma_{\operatorname{c}}\subset\Gamma$ that has nonempty interior. 
	
	\subsubsection{Result}
	We rely on an extension property valid for reasonable classes of data and domains (\cf~\Cref{subsubsection:examples}). As in \Cref{subsection:introinterior}, the parameters $m \geq 2$ and $\alpha \in (0,1)$ are fixed.

	\begin{ssmptn}[Extension property]\label{assumption:extensionproperty}
		There exist a doubly-connected smoothly bounded domain $\mathcal{E} \subset \mathbb{R}^2$ and states $\widetilde{\xvec{u}}_0, \widetilde{\xvec{B}}_0, \widetilde{\xvec{u}}_T, \widetilde{\xvec{B}}_T \in \xCn{{m,\alpha}}_{*}(\overline{\mathcal{E}},\emptyset; \mathbb{R}^2)$
		satisfying 
		\begin{gather*}
			\Omega \subset \mathcal{E}, \quad \Gamma_{\operatorname{c}} \cap \mathcal{E} \neq \emptyset, \quad \Gamma\setminus\Gamma_{\operatorname{c}} \subset \partial \mathcal{E}, \\
			\widetilde{\xvec{u}}_0 |_{\Omega} = \xvec{u}_0, \quad \widetilde{\xvec{u}}_T |_{\Omega} = \xvec{u}_T, \quad \widetilde{\xvec{B}}_0 |_{\Omega} = \xvec{B}_0, \quad \widetilde{\xvec{B}}_T |_{\Omega} = \xvec{B}_T,\\
			\langle  \widetilde{\xvec{B}}_0, \xvec{Q}\rangle_{\xLtwo(\mathcal{E};\mathbb{R}^2)} = \langle  \widetilde{\xvec{B}}_T, \xvec{Q}\rangle_{\xLtwo(\mathcal{E};\mathbb{R}^2)} = 0 \, \mbox{ for all } \xvec{Q} \mbox { with \eqref{equation:divcurl0}}.
		\end{gather*}
	\end{ssmptn}

	In this framework, we set out to study the boundary-controlled incompressible ideal MHD system
	\begin{equation}\label{equation:MHD00}
		\begin{cases}
			\partial_t \xvec{u} + (\xvec{u} \cdot \xsym{\nabla}) \xvec{u} - (\xvec{B} \cdot \xsym{\nabla})\xvec{B} + \xsym{\nabla} p = \xsym{0} & \mbox{ in } \Omega \times (0,T),\\
			\partial_t \xvec{B} + (\xvec{u} \cdot \xsym{\nabla}) \xvec{B} - (\xvec{B} \cdot \xsym{\nabla}) \xvec{u} = \xsym{0} & \mbox{ in } \Omega \times (0,T),\\
			\xdiv{\xvec{u}} = \xdiv{\xvec{B}} = 0  & \mbox{ in } \Omega \times (0,T),\\
			\xvec{u} \cdot \xsym{n}_{\Omega} = \xvec{B} \cdot \xsym{n}_{\Omega} = 0  & \mbox{ on } (\Gamma \setminus \Gamma_{\operatorname{c}}) \times(0,T),\\
			\xvec{u}(\cdot, 0)  =  \xvec{u}_0,\, \xvec{B}(\cdot, 0)  =  \xvec{B}_0  & \mbox{ in } \Omega.
		\end{cases}
	\end{equation}
	As the boundary conditions in \eqref{equation:MHD00} are only specified at~$\Gamma\setminus\Gamma_{\operatorname{c}}$, many solutions might be available.
	The system \eqref{equation:MHD00} is called globally exactly boundary controllable if, for any fixed time~$T > 0$ and possibly large admissible states $\xvec{u}_0$, $\xvec{B}_0$, $\xvec{u}_T$, and $\xvec{B}_T$, there exists at least one solution~$(\xvec{u},\xvec{B}, p)$ meeting the target constraints
	\begin{equation*}\label{equation:terminal_condition}
		\xvec{u}(\cdot, T) = \xvec{u}_T, \quad \xvec{B}(\cdot, T) = \xvec{B}_T.
	\end{equation*}
	The controls acting in \eqref{equation:MHD00} are implicit, at first. After a desired controlled solution to~\eqref{equation:MHD00} is found, one may select boundary controls explicitly in terms of traces at~$\Gamma_{\operatorname{c}}$ of that solution or its derivatives (\cf~\cite{RisselWang2021,Coron1996EulerEq,Glass2000}).

	\begin{thrm}[Main result II]\label{theorem:main}
		Given $T > 0$ and $\xvec{u}_0, \xvec{B}_0, \xvec{u}_T, \xvec{B}_T \in \xCn{{m,\alpha}}_{*}(\overline{\Omega}, \Gamma_{\operatorname{c}}; \mathbb{R}^2)$ obeying \Cref{assumption:extensionproperty}, the system \eqref{equation:MHD00} admits a solution
		\begin{equation*}
			\begin{gathered}
				\xvec{u}, \xvec{B} \in \xCn{0}([0,T];\xCn{{m-1,\alpha}}(\overline{\Omega}; \mathbb{R}^2)) \cap \xLinfty((0,T);\xCn{{m,\alpha}}(\overline{\Omega}; \mathbb{R}^2)),\\
				p \in \xCzero([0,T];\xCn{{m-1,\alpha}}(\overline{\Omega};\mathbb{R})) \cap \xLinfty((0,T);\xCn{{m,\alpha}}(\overline{\Omega};\mathbb{R}))
			\end{gathered}
		\end{equation*}
		satisfying in $\Omega$ the target conditions
		\begin{equation*}\label{equation:MHD-Endconditionbc}
			\begin{aligned}
			\xvec{u}(\cdot, T)  =  \xvec{u}_T, \quad \xvec{B}(\cdot, T)  =  \xvec{B}_T.
			\end{aligned}
		\end{equation*}
	\end{thrm}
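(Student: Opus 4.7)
The plan is to obtain Theorem~\ref{theorem:main} as a direct corollary of Theorem~\ref{theorem:maininterior} by extending the initial and terminal data from $\Omega$ to the doubly-connected domain $\mathcal{E}$ supplied by Assumption~\ref{assumption:extensionproperty}, controlling the extended problem in $\mathcal{E}$, and then restricting the controlled trajectory back to $\Omega$. Assumption~\ref{assumption:extensionproperty} already provides $\widetilde{\xvec{u}}_0,\widetilde{\xvec{u}}_T,\widetilde{\xvec{B}}_0,\widetilde{\xvec{B}}_T \in \xCn{{m,\alpha}}_{*}(\overline{\mathcal{E}},\emptyset;\mathbb{R}^2)$ with vanishing first cohomology projections, so the hypotheses on the data in Theorem~\ref{theorem:maininterior} are available the moment a compatible interior control zone is fixed.

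The key geometric step is to choose $\omegaup\subset\overline{\mathcal{E}}$ satisfying simultaneously (i) $\omegaup\cap\Omega = \emptyset$, (ii) $\omegaup$ meets both boundary components $\Gamma^0$ and $\Gamma^1$ of $\mathcal{E}$, and (iii) there is some $\Lambda\subset\omegaup$ making $\overline{\mathcal{E}}\setminus\Lambda$ simply-connected. Because Assumption~\ref{assumption:extensionproperty} places $\Gamma\setminus\Gamma_{\operatorname{c}}$ on $\partial\mathcal{E}$ while $\Gamma_{\operatorname{c}}\cap\mathcal{E}\neq\emptyset$, the set $\mathcal{E}\setminus\overline{\Omega}$ forms a connected collar wrapping the hole of $\mathcal{E}$ across $\Gamma_{\operatorname{c}}$, and a relatively open $\omegaup$ inside this collar with properties (i)--(iii) is straightforward to exhibit, adjusting the construction of $\mathcal{E}$ near $\Gamma_{\operatorname{c}}$ if needed. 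Applying Theorem~\ref{theorem:maininterior} to $(\widetilde{\xvec{u}}_0,\widetilde{\xvec{B}}_0,\widetilde{\xvec{u}}_T,\widetilde{\xvec{B}}_T)$ on $(\mathcal{E},\omegaup)$ then produces controls $\xsym{\xi},\xsym{\eta}\in\xLinfty((0,T);\xCn{{m-1,\alpha}}(\overline{\mathcal{E}};\mathbb{R}^2))$ supported in $\omegaup$ together with a solution $(\widetilde{\xvec{u}},\widetilde{\xvec{B}},\widetilde{p})$ of~\eqref{equation:MHD00interior} enjoying the required regularity and attaining the extended targets at time $T$.

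Finally, define $\xvec{u}\coloneq\widetilde{\xvec{u}}|_{\Omega\times(0,T)}$, $\xvec{B}\coloneq\widetilde{\xvec{B}}|_{\Omega\times(0,T)}$, $p\coloneq\widetilde{p}|_{\Omega\times(0,T)}$. By (i), the forcings $\mathbb{I}_{\omegaup}\xsym{\xi}$ and $\mathbb{I}_{\omegaup}\xsym{\eta}$ vanish on $\Omega$, so the restricted triple obeys the unforced momentum, induction, and divergence equations of~\eqref{equation:MHD00}. The impermeability conditions on $\Gamma\setminus\Gamma_{\operatorname{c}}$ transfer from $\widetilde{\xvec{u}}\cdot\xvec{n}_{\mathcal{E}}=\widetilde{\xvec{B}}\cdot\xvec{n}_{\mathcal{E}}=0$ thanks to $\Gamma\setminus\Gamma_{\operatorname{c}}\subset\partial\mathcal{E}$; the initial and terminal conditions on $\Omega$ are inherited from the extension identities of Assumption~\ref{assumption:extensionproperty}; and the Hölder regularity in time and space descends verbatim under restriction. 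The implicit boundary controls on $\Gamma_{\operatorname{c}}$ are then recovered \emph{a posteriori} as traces of $(\xvec{u},\xvec{B})$ and their derivatives, following the same recipe as in~\cite{RisselWang2021,Coron1996EulerEq,Glass2000}.

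The main obstacle in this reduction is the geometric compatibility encoded in step two: one must be able to place $\omegaup$ disjoint from $\Omega$ while still touching both $\Gamma^0$ and $\Gamma^1$, which is demanded by the Kelvin-type constraint highlighted in Remark~\ref{remark:rr}. All genuine dynamical content --- the return-method construction, the trajectory splitting, and the control of the first cohomology component of $\xvec{B}$ --- is absorbed into Theorem~\ref{theorem:maininterior}, so once this geometric choice is arranged (and the extensions are provided by Assumption~\ref{assumption:extensionproperty}), the proof reduces to the clean restriction argument outlined above.
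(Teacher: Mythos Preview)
Your proof is correct and follows essentially the same approach as the paper: extend the data to the doubly-connected domain $\mathcal{E}$ provided by Assumption~\ref{assumption:extensionproperty}, apply the interior controllability result there with a control zone $\omegaup\subset\overline{\mathcal{E}}\setminus\Omega$, and restrict to $\Omega$. The paper's proof is terser (two sentences, invoking Theorem~\ref{theorem:local} rather than Theorem~\ref{theorem:maininterior}, though the logical diagram in the paper indicates the latter dependence), whereas you spell out the geometric requirement on $\omegaup$ and the restriction argument explicitly; both are the same reduction.
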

	\begin{proof}
		Owing to \Cref{assumption:extensionproperty}, we apply \Cref{theorem:local} to an extended problem posed in a doubly-connected domain $\mathcal{E}$ of the type introduced in \Cref{subsection:introinterior}. Eventually,  by taking restrictions, a boundary-controlled solution in $\Omega\times(0,T)$ is extracted from an interior-controlled one in $\mathcal{E}\times(0,T)$.
	\end{proof}

	\subsubsection{Examples}\label{subsubsection:examples}

	To illustrate that \Cref{assumption:extensionproperty} is natural, we~content ourselves with an exemplary class of domains where $\Gamma_{\operatorname{c}}$ comprises straight lines (\cf~\Cref{Figure:exampledomains}). Then, all \enquote{reasonable} states are admissible. One could compose $\Gamma_{\operatorname{c}}$ also of other curves; we avoid dwelling on it. 
	\begin{ssmptn}\label{assumption:funnel}
		$\Gamma_{\operatorname{c}} = \mathcal{L}_1 \cup \dots \cup \mathcal{L}_{i_{\operatorname{c}}}$ with line segments $\mathcal{L}_1, \dots, \mathcal{L}_{i_{\operatorname{c}}}$ meeting $\Gamma\setminus\Gamma_{\operatorname{c}}$ in a right angle, and there are pairwise disjoint open sets $\xU_{1}, \dots, \xU_{i_{\operatorname{c}}} \subset \mathbb{R}^2$ such that (\cf~\Cref{Figure:exampledomains})
		\begin{itemize}
			\item $\overline{\mathcal{L}_l} \subset \xU_l$ for each $l \in \{1,\dots,i_{\operatorname{c}}\}$,
			\item for any $l \in \{1,\dots,i_{\operatorname{c}}\}$, the intersection $\Omega\cap \xU_{l}$ is either a rectangle or an annulus sector.
		\end{itemize}
	\end{ssmptn}

	\begin{figure}[ht!]
		\centering
		\resizebox{0.93\textwidth}{!}{
			\begin{subfigure}[b]{0.5\textwidth}
				\centering
				\resizebox{1\textwidth}{!}{
					\begin{tikzpicture}
						\clip(-0.31,-0.3) rectangle (6.4,4.3);
						
						\draw[line width=0.6mm, color=black] plot[smooth, tension=0.6] coordinates {(0.5,3.5)  (1.5,3.5) (2,3.5)  (2.5,2)  (4,3.5) (4.5,3.5) (5.5,3.5)};
						\draw[line width=0.6mm, color=black] (0.5,0.5)--(5.5,0.5);
						
						\draw[dashed,line width=1mm, color=black] (0.5,0.5)--(0.5,3.5);
						\draw[dashed,line width=1mm, color=black] (5.5,0.5)--(5.5,3.5); 
						
						\coordinate[label=left:\Large{$\mathcal{L}_1$}] (D) at (0.595,2);
						\coordinate[label=right:\Large{$\mathcal{L}_2$}] (D) at (5.45,2);
						
						\draw [line width=0.35mm, color=MidnightBlue] (-0.3,-0.25) rectangle (0.7, 4.25);
						\draw [line width=0.35mm, color=MidnightBlue] (5,-0.25) rectangle (6.35, 4.25);
					\end{tikzpicture}
				}
				\label{Figure:example1}
			\end{subfigure}
			\hfill \quad \hfill
			\begin{subfigure}[b]{0.5\textwidth}
				\centering 
				\tikzset{>={Latex[width=2mm,length=2mm]}}
				\resizebox{0.87\textwidth}{!}{
					\begin{tikzpicture}
						\clip(-0.25, 0) rectangle (3.3,4);
						
						\draw[line width=0.35mm, color=black] plot[smooth, tension=1] coordinates {(0.3,0.9) (0.6,0.9) (1,0.9) (1.5,2.3) (1.8,1) (2.9,3) (2.5,1.3)  (3,0.2) (1,0.2) (0.6,0.2) (0.3,0.2)};

						\draw[dotted,line width=0.5mm, color=black] (0.3,0.179)--(0.3,0.93); 
						
						\coordinate[label=right:\small{$\mathcal{L}_1$}] (D) at (0.41,2);
						
						\draw[line width=0.2mm, ->] (0.5,2)  to [out=-150,in=150,looseness=0.8] (0.3, 0.55);
						
						\draw [line width=0.25mm, color=MidnightBlue] (-0.2,0.05) rectangle (0.5, 1.5);
					\end{tikzpicture}
				}
				\label{Figure:example2}
			\end{subfigure}
			\begin{subfigure}[b]{0.401\textwidth}
				\centering
				\tikzset{>={Latex[width=3mm,length=3mm]}}
				\resizebox{1\textwidth}{!}{
					\begin{tikzpicture}
						\clip(0,-0.2) rectangle (5,4.9);
						\draw[line width=0.5mm, color=black] (80:2.5) coordinate (A) arc (80:10:2.5) coordinate (C)  (10:3.9) coordinate (D) arc (10:80:3.9) coordinate (B);
						
						\draw[dashed,line width=1mm, color=black] (A) -- (B);
						\draw[dashed,line width=1mm, color=black] (C) -- (D);
						
						\draw[line width=0.2mm, color=MidnightBlue] (18:1.5) coordinate (AA) arc 	(28:5:1.5) coordinate (CC)  (28:4.9) coordinate (DD) arc (28:5:4.9) coordinate (BB);
						\draw[line width=0.2mm, color=MidnightBlue] (18:1.5) coordinate (AA) arc 	(28:5:1.5) coordinate (CC)  (28:4.9) coordinate (DD) arc (28:5:4.9) coordinate (BB);
						\draw[line width=0.2mm, color=MidnightBlue] (CC) -- (BB);
						\draw[line width=0.2mm, color=MidnightBlue] (DD) -- (AA);
						
						\draw[line width=0.2mm, color=MidnightBlue] (85:1.5) coordinate (AAA) arc 	(85:55:1.5) coordinate (CCC)  (85:4.9) coordinate (DDD) arc (85:55:4.9) coordinate (BBB);
						\draw[line width=0.2mm, color=MidnightBlue] (85:1.5) coordinate (AAA) arc 	(85:55:1.5) coordinate (CCC)  (85:4.9) coordinate (DD) arc (85:55:4.9) coordinate (BBB);
						\draw[line width=0.2mm, color=MidnightBlue] (CCC) -- (BBB);
						\draw[line width=0.2mm, color=MidnightBlue] (DDD) -- (AAA);
						\coordinate[label=right:\Large{$\mathcal{L}_1$}] (D) at (0.6,4.3);
						\coordinate[label=right:\Large{$\mathcal{L}_2$}] (D) at (3.3,2.5);
						\draw[line width=0.2mm, ->] (3.5,2.5)  to [out=180,in=90,looseness=0.8] (3.1, 0.65);
						\draw[line width=0.2mm, ->] (1.39,4.3)  to [out=0,in=-10,looseness=0.8] (0.65, 3.1);
					\end{tikzpicture}
				}
				\label{Figure:example3}
			\end{subfigure}
		}
		\caption{Several domains satisfying \Cref{assumption:funnel} are displayed. The dashed lines refer to~$\Gamma_{\operatorname{c}}$. Exemplary neighborhoods $\xU_l$ of $\overline{\mathcal{L}_l}$, with $l \in \{1,\dots, i_{\operatorname{c}}\}$, are indicated.}
		\label{Figure:exampledomains}
	\end{figure}
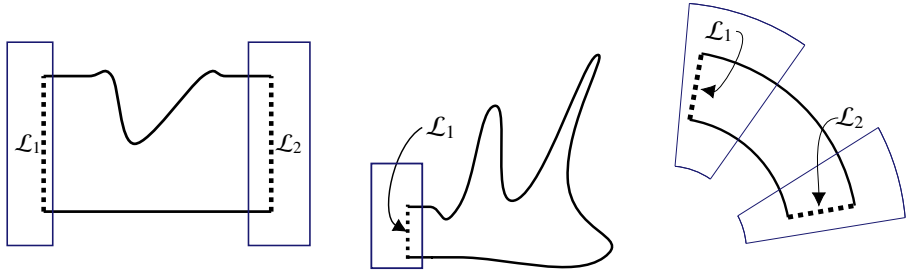

	The next lemma states that \Cref{assumption:funnel} implies \Cref{assumption:extensionproperty} under sharp constraints on the data (\cf~\Cref{remark:strcstme}). Hereto, we decompose
	\begin{equation}\label{equation:ddcomp}
	 	\Gamma\setminus\Gamma_{\operatorname{c}} = \mathscr{G}^1 \cup \mathscr{G_2}^2, \quad \mathscr{G}^1 \neq \emptyset
	\end{equation}
 	so that $\Omega$ admits a bounded doubly-connected extension (through $\Gamma_{\operatorname{c}}$) whose boundary comprises disjoint Jordan curves $\widetilde{\mathscr{G}}^1$ and~$\widetilde{\mathscr{G}}^2$ satisfying $\mathscr{G}^i \subset \widetilde{\mathscr{G}}^i$ for~$i\in\{1,2\}$.   
	
	\begin{lmm}\label{lemma:implassmp}
		Suppose that $\Gamma\setminus\Gamma_{\operatorname{c}}$ has a decomposition like \eqref{equation:ddcomp} mentioned above. Moreover, the states $\xvec{u}_0, \xvec{B}_0, \xvec{u}_T, \xvec{B}_T \in \xCn{{m,\alpha}}_{*}(\overline{\Omega}, \Gamma_{\operatorname{c}}; \mathbb{R}^2)$ are assumed to admit respective stream functions $\phi_0, \psi_0, \phi_T, \psi_T \in \xCn{{m+1,\alpha}}(\overline{\Omega}; \mathbb{R})$ such that
		\begin{equation}\label{equation:condntr}
			\begin{cases}
				\phi_0(\xvec{x}) = \psi_0(\xvec{x}) = \phi_T(\xvec{x}) = \psi_T(\xvec{x}) = 0 & \mbox{ if } \xvec{x} \in \mathscr{G}^1,\\
				\psi_0(\xvec{x}) = \psi_T(\xvec{x}) = 0, \, \phi_0(\xvec{x}) = d_1, \, \phi_T(\xvec{x}) = d_2 & \mbox{ if }  \xvec{x} \in \mathscr{G}^2,
			\end{cases}
		\end{equation}
		where $d_1,d_2\in \mathbb{R}$. Then, \Cref{assumption:funnel} yields \Cref{assumption:extensionproperty}.
	\end{lmm}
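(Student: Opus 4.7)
The plan is to prove the lemma in three stages: (i) construct a doubly-connected smooth extension $\mathcal{E}$ of $\Omega$ that fits the local reflection structure from \Cref{assumption:funnel}, (ii) extend the four stream functions across $\Gamma_{\operatorname{c}}$ while preserving the piecewise-constant traces in \eqref{equation:condntr}, and (iii) verify the cohomological condition on the extended magnetic stream functions. For (i), the decomposition hypothesis already furnishes some bounded doubly-connected extension; I would refine the choice so that, within each $\xU_l$ from \Cref{assumption:funnel}, $\mathcal{E}$ coincides with the union of $\Omega \cap \xU_l$ and its mirror image across $\mathcal{L}_l$ (Euclidean reflection in the rectangular case, angular reflection $\theta \mapsto 2\theta_l - \theta$ in the annulus-sector case). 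The right-angle condition ensures that reflected pieces of $\Gamma\setminus\Gamma_{\operatorname{c}}$ glue $\xCone$-tangentially to $\Gamma\setminus\Gamma_{\operatorname{c}}$ at the endpoints of $\mathcal{L}_l$; rounding these junctions yields a bounded doubly-connected $\xCinfty$ domain $\mathcal{E} \supset \Omega$ with $\partial\mathcal{E} = \widetilde{\mathscr{G}}^1 \cup \widetilde{\mathscr{G}}^2$, $\mathscr{G}^i \subset \widetilde{\mathscr{G}}^i$, and $\Gamma_{\operatorname{c}} \cap \mathcal{E} \neq \emptyset$.

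For (ii), I would introduce straightened coordinates $(s,t)$ near each $\mathcal{L}_l$ with $\mathcal{L}_l = \{s = 0\}$, $\Omega \cap \xU_l \subset \{s > 0\}$, and, thanks to perpendicularity, $\Gamma\setminus\Gamma_{\operatorname{c}} \cap \xU_l \subset \{t = 0\} \cup \{t = t_l\}$. A Seeley-type extension operating purely in $s$,
\[
	\widetilde{\varphi}(s,t) \coloneq \sum_{k=0}^{N} a_k\,\varphi(-\lambda_k s, t), \qquad s < 0,
\]
with $\lambda_k > 0$ and $\sum_k a_k(-\lambda_k)^j = 1$ for $j = 0, \dots, m+1$, produces a $\xCn{{m+1,\alpha}}$-continuation across $\{s = 0\}$. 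Crucially, because the reflection does not touch $t$, every summand sends $\{t = 0\}$ and $\{t = t_l\}$ to themselves, so any constant value that $\varphi \in \{\phi_0, \psi_0, \phi_T, \psi_T\}$ attains on these segments (prescribed by \eqref{equation:condntr}) is inherited intact by $\widetilde{\varphi}$. Patching the four local Seeley extensions via a partition of unity on $\bigcup_l \xU_l$ and combining with a Whitney-type extension on a collar of $\Gamma\setminus\Gamma_{\operatorname{c}}$ yields $\widetilde{\phi}_0, \widetilde{\psi}_0, \widetilde{\phi}_T, \widetilde{\psi}_T \in \xCn{{m+1,\alpha}}(\overline{\mathcal{E}}; \mathbb{R})$ that are constant on each component of $\partial\mathcal{E}$, with the constants matching \eqref{equation:condntr}; in particular $\widetilde{\psi}_0$ and $\widetilde{\psi}_T$ vanish on all of $\partial\mathcal{E}$. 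Setting $\widetilde{\xvec{u}}_\bullet \coloneq \xnab^{\perp}\widetilde{\phi}_\bullet$ and $\widetilde{\xvec{B}}_\bullet \coloneq \xnab^{\perp}\widetilde{\psi}_\bullet$ gives the required divergence-free, boundary-tangent extensions in $\xCn{{m,\alpha}}_{*}(\overline{\mathcal{E}}, \emptyset; \mathbb{R}^2)$ that restrict to the originals on $\Omega$.

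For (iii), any $\xvec{Q}$ satisfying \eqref{equation:divcurl0} admits the representation $\xvec{Q} = \xnab^{\perp}\chi$ with $\chi$ harmonic on $\mathcal{E}$ and piecewise constant on $\partial\mathcal{E}$. Since $\xnab^{\perp}f \cdot \xnab^{\perp}g = \xnab f \cdot \xnab g$, integration by parts then gives
\[
	\langle \widetilde{\xvec{B}}_\bullet, \xvec{Q}\rangle_{\xLtwo(\mathcal{E};\mathbb{R}^2)} = \int_{\mathcal{E}} \xnab\widetilde{\psi}_\bullet \cdot \xnab\chi \, \xdx{\xvec{x}} = \int_{\partial\mathcal{E}} \widetilde{\psi}_\bullet \, \frac{\partial\chi}{\partial\xvec{n}_{\mathcal{E}}} \,\xdx{S} - \int_{\mathcal{E}} \widetilde{\psi}_\bullet \,\Delta\chi \,\xdx{\xvec{x}} = 0,
\]
since $\widetilde{\psi}_\bullet \equiv 0$ on $\partial\mathcal{E}$ and $\chi$ is harmonic, which completes the verification of \Cref{assumption:extensionproperty}. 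The principal technical obstacle is step (ii): simultaneously obtaining $\xCn{{m+1,\alpha}}$ regularity and preserving the prescribed piecewise-constant traces on $\Gamma\setminus\Gamma_{\operatorname{c}}$ through the extension across $\Gamma_{\operatorname{c}}$. The right-angle condition in \Cref{assumption:funnel} is precisely what permits a Seeley reflection acting purely normal to $\mathcal{L}_l$, leaving the values on the perpendicular portions of $\Gamma\setminus\Gamma_{\operatorname{c}}$ untouched; without this orthogonality the reflection would couple normal and tangential derivatives and generally destroy the constancy of these traces.
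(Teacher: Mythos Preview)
Your overall strategy is correct and matches the paper's: build a doubly-connected $\mathcal{E}$, push the stream functions across $\Gamma_{\operatorname{c}}$ by a Seeley-type reflection exploiting the right-angle structure of \Cref{assumption:funnel}, and read off the cohomology condition from the vanishing of $\widetilde{\psi}_\bullet$ on $\partial\mathcal{E}$. Your step (iii) is exactly the computation the paper carries out elsewhere (around \eqref{equation:constarg}).

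The paper's execution differs from yours in one simplifying move. Rather than propagating the possibly nonzero constants $d_1,d_2$ all the way around $\widetilde{\mathscr{G}}^2$, it first subtracts from $\xvec{u}_0,\xvec{u}_T$ the gradients $\xnab^{\perp}\eta_i$ of harmonic functions on $\mathcal{E}$ solving $\Delta\eta_i=0$, $\eta_i|_{\widetilde{\mathscr{G}}^1}=0$, $\eta_i|_{\widetilde{\mathscr{G}}^2}=d_i$, thereby reducing to the case $d_1=d_2=0$. After that reduction all four stream functions vanish on $\Gamma\setminus\Gamma_{\operatorname{c}}$, so the Seeley reflections produce extensions that are zero on $\partial\Omega_1$ near $\overline{\Omega}$; a single cutoff $\chi_0$ equal to $1$ on $\overline{\Omega}$ and supported in a thin collar then gives globally defined stream functions that vanish identically on $\partial\mathcal{E}$. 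This sidesteps the part of your step (ii) that is least explicit, namely the ``Whitney-type extension on a collar'' which in your version must simultaneously be $\xCn{{m+1,\alpha}}$ and assume prescribed nonzero constants along the portions of $\widetilde{\mathscr{G}}^2$ far from $\Omega$. That is certainly doable, but it is more work than the harmonic-corrector trick, and as written your sentence does not quite say how the Whitney extension is made to respect those constants. Your route has the virtue of treating all cases uniformly; the paper's buys a cleaner global step at the price of one extra Dirichlet problem.
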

	\begin{proof}
		After extending $\Omega$ in a first step, the stream functions $\phi_0$, $\psi_0$, $\phi_T$, and $\psi_T$ are continued via reflections. For brevity, we only focus on the cases where either $\mathscr{G}^2=\emptyset$ or $d_1 = d_2 = 0$. Otherwise, one could begin with deciding on a doubly-connected extension $\mathcal{E}$ having two connected boundary components $\widetilde{\mathscr{G}}^1$ and $\widetilde{\mathscr{G}}^2$ so that $\mathscr{G}^j \subset \widetilde{\mathscr{G}}^j$. Then, one would solve the Dirichlet problems
		\[
			\Delta \eta_i = 0 \, \mbox{ in } \mathcal{E}, \quad \eta_i = 0 \, \mbox{ on } \mathscr{G}^1, \quad \eta_i = d_i \, \mbox{ on } \mathscr{G}^2, \quad i \in \{1,2\}
		\] 
		and apply the below arguments using the modified velocities $\xvec{u}_0 - \xnab^{\perp} \eta_1$ and $\xvec{u}_T - \xnab^{\perp} \eta_2$.
		\paragraph{Step 1. Doubly-connected domain extension.} Let the neighborhoods $\xU_1, \dots, \xU_{i_{\operatorname{c}}}$ of $\overline{\mathcal{L}}_1, \dots \overline{\mathcal{L}}_{i_{\operatorname{c}}}$ be given as in \Cref{assumption:funnel}. Hence, there exists a bounded simply-connected domain $\widetilde{\Omega} \supset \Omega$ with $(\Gamma\setminus \Gamma_{\operatorname{c}}) \subset \partial \widetilde{\Omega}$, and so that each $\xU_l\cap \widetilde{\Omega}$ represents either a rectangle or an annulus sector.
		Thus, we can take any open ball $\emptyset\neq\widetilde{\xB} \subset \widetilde{\Omega}\setminus\overline{\Omega}$ and define $\mathcal{E} \coloneq \widetilde{\Omega} \setminus \widetilde{\xB}$ (\cf~\Cref{Figure:extension0}). 
		
		\paragraph{Remark.} The idea is slightly different when $d_1 \neq 0$ or $d_2 \neq 0$. In that case, one extends $\mathscr{G}^1$ and $\mathscr{G}^2$ to smooth disjoint nested Jordan curves enclosing $\mathcal{E}$, as shown for an example in \Cref{Figure:extension0}. The subsequent steps are adapted accordingly.
		\paragraph{Step 2. Extensions of the data.} After cropping members of the family $(\xU_l)_{l\in\{1,\dots,i_{\operatorname{c}}\}}$ in a way that avoids intersecting the cavity $\widetilde{\xB}$, the previous step provides pairwise disjoint open neighborhoods $\widetilde{\xU}_1, \dots, \widetilde{\xU}_{i_{\operatorname{c}}}$ of $\overline{\mathcal{L}}_1, \dots \overline{\mathcal{L}}_{i_{\operatorname{c}}}$ such that $\widetilde{\xU}_l \cap \overline{\mathcal{E}}$ constitutes for each $l \in \{1,\dots,i_{\operatorname{c}}\}$ either a rectangle or an annulus sector (\cf~\Cref{Figure:extension0}). Depending on the respective shapes of $\widetilde{\xU}_1\cap\overline{\mathcal{E}}, \dots, \widetilde{\xU}_{i_{\operatorname{c}}}\cap\overline{\mathcal{E}}$, the stream functions of the initial and target states are continued beyond $\Gamma_{\operatorname{c}}$ to
		\[
			\Omega_1 \coloneq \Omega \cup (\mathcal{E}\cap \widetilde{\xU}_1)\cup\dots\cup (\mathcal{E}\cap \widetilde{\xU}_{i_{\operatorname{c}}})
		\]
		in the following manner.
		\begin{itemize}
			\item When $\widetilde{\xU}_l \cap \overline{\mathcal{E}}$ is a rectangle, reflections are used at $\Gamma_{\operatorname{c}}$ in Cartesian coordinates for extending $\phi_0, \psi_0, \phi_T, \psi_T$ to $\Omega \cup (\mathcal{E}\cap \widetilde{\xU}_l)$.
			\item If $\widetilde{\xU}_l \cap \overline{\mathcal{E}}$ is an annulus sector, rotational reflections at $\Gamma_{\operatorname{c}}$ are employed for continuing $\phi_0, \psi_0, \phi_T, \psi_T$ to $\Omega \cup (\mathcal{E}\cap \widetilde{\xU}_l)$. As an example that goes without loss of generality, we consider $0 < r_1 < r_2 < +\infty$ and extend functions
			\[
				(r, \theta) \mapsto \psi(r\cos(\theta),r\sin(\theta))
			\]
			from $[r_1, r_2] \times [-\pi/4,0]$ to the larger domain $[ r_1, r_2]\times[-\pi/4,\pi/4]$ by assigning
			\begin{equation*}
				(r, \theta) \mapsto \begin{cases}
					\sum_{j=1}^{m+2} a_j \psi(r\cos(-j^{-1}\theta), r\sin(-j^{-1}\theta)) & \mbox{ if } \theta \in(0,\pi/4],\\
					\psi(r\cos(\theta),r\sin(\theta)) & \mbox{ if } \theta \in [-\pi/4,0],
				\end{cases}
			\end{equation*}
			where the coefficients $(a_j)_{j\in\{1,\dots,m+2\}}$ solve a linear Vandermonde system.
		\end{itemize} 
		The stream function extensions beyond $\Gamma_{\operatorname{c}}$, as obtained in the above-described way, are denoted by $\widehat{\phi}_0, \widehat{\psi}_0, \widehat{\phi}_T, \widehat{\psi}_T \in \xCn{{m+1,\alpha}}(\overline{\Omega}_1; \mathbb{R})$. Since the zero boundary values of the original stream functions are reflected at $\Gamma_{\operatorname{c}}$, there exists~$s_0 > 0$ such that
		\[
			\widehat{\phi}_0(\xvec{x}) = \widehat{\psi}_0(\xvec{x}) = \widehat{\phi}_T(\xvec{x}) = \widehat{\psi}_T(\xvec{x}) = 0
		\]
		for all $\xvec{x} \in \partial \Omega_1$ with $\operatorname{dist}(\xvec{x}, \overline{\Omega}) < s_0$. Now, let $\chi_0\colon \mathbb{R}^2 \longrightarrow [0,1]$ be a smooth cutoff satisfying
		\[
			\chi_0(\xvec{x}) = \begin{cases}
				1 & \mbox{ if } \xvec{x} \in \overline{\Omega},\\
				0 & \mbox{ if } \operatorname{dist}(\xvec{x}, \overline{\Omega}) > s_0/2.
			\end{cases}
		\]
		Moreover, fix arbitrary continuations of $\widehat{\phi}_0, \widehat{\psi}_0, \widehat{\phi}_T, \widehat{\psi}_T$  beyond $\overline{\Omega}_1$ to $\overline{\mathcal{E}}$, for instance,
		\[
			\widetilde{\phi}_0, \widetilde{\psi}_0, \widetilde{\phi}_T, \widetilde{\psi}_T = 
			\begin{cases}
				\widehat{\phi}_0, \widehat{\psi}_0, \widehat{\phi}_T, \widehat{\psi}_T & \mbox{ in } \overline{\Omega}_1,\\
				0 & \mbox{ otherwise.}
			\end{cases}
		\]
		Ultimately, we define
		\[
			\widetilde{\xvec{u}}_0 = \xnab^{\perp} \left(\chi_0\widetilde{\phi}_0\right), \quad \widetilde{\xvec{B}}_0 = \xnab^{\perp} \left(\chi_0\widetilde{\psi}_0\right), \quad \widetilde{\xvec{u}}_T = \xnab^{\perp} \left(\chi_0\widetilde{\phi}_T\right), \quad \widetilde{\xvec{B}}_T = \xnab^{\perp} \left(\chi_0\widetilde{\psi}_T\right).
		\]
		\end{proof}
			
		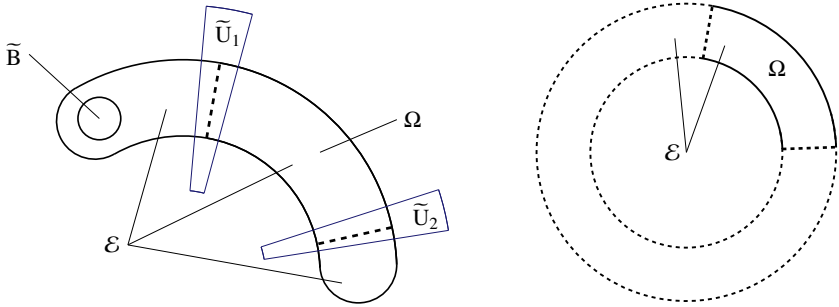
\begin{figure}[ht!]
			\centering
			\resizebox{0.93\textwidth}{!}{
				\begin{subfigure}[b]{0.5\textwidth}
					\centering
					\resizebox{1\textwidth}{!}{
						\begin{tikzpicture}
							\clip(-3.5,-0.6) rectangle (5,4.9);
							\draw[line width=0.3mm, color=black] (80:2.5) coordinate (A) arc 	(80:2:2.5) coordinate (C)  (2:3.9) coordinate (D) arc (2:80:3.9) coordinate (B);
							\draw[line width=0.3mm, color=black] (80:2.5) coordinate (A) arc 	(80:2:2.5) coordinate (C)  (2:3.9) coordinate (D) arc (2:80:3.9) coordinate (B);

							\draw[line width=0.3mm, color=black] (80:2.5) coordinate (E) arc 	(80:120:2.5) coordinate (F)  (80:3.9) coordinate (G) arc (80:120:3.9) coordinate (H);

							\draw[line width=0.3mm, color=black] (-1.95,3.38) arc[start angle=120, end 	angle=300, x radius=0.7, y radius=0.7];

							\draw[line width=0.3mm, color=black] (D) arc[start angle=360, end 	angle=180, x radius=0.7, y radius=0.7];
							
							\filldraw[color=black, fill=white, line width=0.3mm,  fill opacity=1] 	(-1.52,2.82) circle (0.39);

							\draw[line width=0.1mm, color=MidnightBlue] (18:1.5) coordinate (AA) arc 	(18:9:1.5) coordinate (CC)  (18:4.9) coordinate (DD) arc (18:9:4.9) coordinate (BB);
							\draw[line width=0.1mm, color=MidnightBlue] (18:1.5) coordinate (AA) arc 	(18:9:1.5) coordinate (CC)  (18:4.9) coordinate (DD) arc (18:9:4.9) coordinate (BB);
							\draw[line width=0.1mm, color=MidnightBlue] (CC) -- (BB);
							\draw[line width=0.1mm, color=MidnightBlue] (DD) -- (AA);
							
							\draw[line width=0.1mm, color=MidnightBlue] (85:1.5) coordinate (AAA) arc 	(85:75:1.5) coordinate (CCC)  (85:4.9) coordinate (DDD) arc (85:75:4.9) coordinate (BBB);
							\draw[line width=0.1mm, color=MidnightBlue] (85:1.5) coordinate (AAA) arc 	(85:75:1.5) coordinate (CCC)  (85:4.9) coordinate (DD) arc (85:75:4.9) coordinate (BBB);
							\draw[line width=0.1mm, color=MidnightBlue] (CCC) -- (BBB);
							\draw[line width=0.1mm, color=MidnightBlue] (DDD) -- (AAA);
							
							\draw[dashed,line width=0.5mm, color=black] (A) -- (B);
							\draw[dashed,line width=0.5mm, color=black] (2.47,0.52) -- (3.8,0.82);
							\coordinate[label=right:\large{$\Omega$}] (L) at (3.9,2.8);
							\draw[line width=0.1mm] (2.5,2.2) -- (L);
							\coordinate[label=left:\large{$\mathcal{E}$}] (H) at (-1,0.5);
							\draw[line width=0.1mm] (2,1.97) -- (H);
							\draw[line width=0.1mm] (-0.3,2.99) -- (H);
							\draw[line width=0.1mm] (2.9,-0.2) -- (H);
							\coordinate[label=left:\large{$\widetilde{\xB}$}] (I) at (-2.8,4);
							\draw[line width=0.1mm] (-1.52,2.82) -- (I);
							\coordinate[label=left:\large{$\widetilde{\xU}_1$}] (U1) at (1.199,4.39);
							\coordinate[label=left:\large{$\widetilde{\xU}_2$}] (U2) at (4.8,1.03);
						\end{tikzpicture}
					}
					\label{Figure:p1}
				\end{subfigure}
				\quad \quad
				\begin{subfigure}[b]{0.39\textwidth}
					\centering
					\resizebox{1\textwidth}{!}{
						\begin{tikzpicture}
							\clip(-4,-4) rectangle (5.5,4.9);
							\draw[line width=0.5mm, color=black] (80:2.5) coordinate (A) arc 	(80:2:2.5) coordinate (C)  (2:3.9) coordinate (D) arc (2:80:3.9) coordinate (B);

							\draw[line width=0.5mm, dashed, color=black] (80:2.5) coordinate (E) arc 	(80:375:2.5) coordinate (F)  (80:3.9) coordinate (G) arc (80:375:3.9) coordinate (H);
											
							\draw[dashed,line width=0.7mm, color=black] (A) -- 	(B);
							\draw[dashed,line width=0.7mm, color=black] (C) -- (D);
							\coordinate[label=right:\LARGE{$\Omega$}] (L) at (2,2.2);
							
							\coordinate[label=left:\LARGE{$\mathcal{E}$}] (H) at (0,0);
							\draw[line width=0.2mm] (1,2.8) -- (H);
							\draw[line width=0.2mm] (-0.3,2.99) -- (H);
						\end{tikzpicture}
					}
					\label{Figure:p2}
			\end{subfigure}}
			\caption{Illustrations of two different doubly-connected extensions $\mathcal{E}$ for an annulus sector $\Omega$ where~$\Gamma_{\operatorname{c}}$ consists of two disjoint line segments. When $d_1 = d_2 = 0$ in \Cref{lemma:implassmp}, the cavity $\widetilde{\xB}$ is located in the extended part. Here, the intersections $\widetilde{\xU}_1\cap\overline{\mathcal{E}}$ and $\widetilde{\xU}_2\cap\overline{\mathcal{E}}$ are annulus sectors. The right sketch refers to the case where $\mathscr{G}^2 \neq \emptyset$ with $d_1 \neq 0$ or $d_2 \neq 0$.}
			\label{Figure:extension0}
		\end{figure}

	\begin{rmrk}\label{remark:strcstme}
		If Assumptions~\Rref{assumption:extensionproperty} and \Rref{assumption:funnel} are both true, one can choose $\mathscr{G}^i$ and stream functions with \eqref{equation:condntr}. Indeed, as seen via integration by parts, extensions $\widetilde{\xvec{B}}_0$ and $\widetilde{\xvec{B}}_T$ obeying \Cref{assumption:extensionproperty} have stream functions that are constant on $\partial \mathcal{E}$ (\cf~\eqref{equation:constarg}).
	\end{rmrk}

	\begin{xmpl}\label{example:data}
		All $\xvec{u}_0, \xvec{B}_0, \xvec{u}_T, \xvec{B}_T \!\in\! \xCn{{m,\alpha}}_{*}(\overline{\Omega}, \emptyset; \mathbb{R}^2)$ admit stream functions with~\eqref{equation:condntr}.
	\end{xmpl}
	\begin{xmpl}
		If, for instance, $\Gamma\setminus\Gamma_{\operatorname{c}} = \mathscr{G}^1 \cup \mathscr{G}^2 \cup \mathscr{G}^3$ and $\xvec{u}_0 = \xnab^{\perp}\phi$ with constant values~$\phi = c_i$ on~$\mathscr{G}^i$, $i \in \{1,2,3\}$, and $c_1 \neq c_2 \neq c_3 \neq c_1$, then \Cref{assumption:extensionproperty} cannot hold. Indeed, as $\widetilde{\xvec{u}}_0$ must be tangential at $\partial \mathcal{E}$, its stream functions are constant on each connected component of $\partial \mathcal{E}$; hence, $\mathcal{E}$ could not be doubly-connected.
	\end{xmpl}
	\begin{xmpl}
		Assume, \eg, that the uncontrolled boundary $\Gamma\setminus \Gamma_{\operatorname{c}}$ of $\Omega$ comprises two connected components~$\mathscr{G}^1$ and $\mathscr{G}^2$, and that the stream functions of~$\xvec{B}_0$ (or that of~$\xvec{B}_T$) have different constant values on~$\mathscr{G}^1$ compared to their constant values on~$\mathscr{G}^2$. Then, if~$\mathcal{E}$ denotes any doubly-connected extension as in~\Cref{assumption:extensionproperty}, the uncontrolled boundary~$\Gamma\setminus \Gamma_{\operatorname{c}}$ must intersect all connected components of~$\partial \mathcal{E}$, as shown also in \Cref{Figure:extension0}. In particular, as the boundary conditions for the magnetic field require stream functions that are constant at each connected component of the boundary, an arbitrarily extended magnetic field~$\widetilde{\xvec{B}}_0$ of~$\xvec{B}_0$ (or~$\widetilde{\xvec{B}}_T$ of~$\xvec{B}_T$) can generally not be corrected inside~$\overline{\mathcal{E}}\setminus \overline{\Omega}$ to ensure the last condition of \Cref{assumption:extensionproperty}.
	\end{xmpl}

	\subsection{Past, present, and future}
	The controllability of perfect fluids once constituted a challenging open problem raised by J.-L. Lions, \eg, in \cite{LionsJL1991}. Among the various sources of trouble stands out the failure of a common linear test, \eg, the equation $\partial_t \xvec{u} + \xnab p = 0$
	conserves vorticity. An answer was found in the 1990s due to J.-M. Coron, who achieved the breakthrough by exploiting the nonlinear mixing term $(\xvec{u}\cdot\xnab)\xvec{u}$ to establish his return method for the first time in a PDE context. This approach involves a sophisticated time-periodic trajectory reflecting the domain's topology~(\cf~\cite{Coron1996EulerEq,Glass2000,Fernandez-CaraSantosSouza2016} and \cite[Part 2, Section 6.2]{Coron2007}). When the controls are supported in the interior, obstructions emerge in the aftermath of Kelvin's circulation theorem: there exist invariant boundary circulations. Nevertheless, the approximate controllability holds in~$\xLn{p}$ with~$p \in [1,+\infty)$, and supplementary hypotheses facilitate improvements (\cf~\cite{Glass2001,Coron1996EulerEq}). 
	Departing towards incompressible ideal MHD, topological phenomena and coupling effects are causing new difficulties such as: a) the necessity of \eqref{equation:intrprctrl}; b) our current limitation to impose \eqref{equation:conditionq}; c) regularity loss issues related to fixed point iterations. While closed estimates are available for symmetrized unknowns, the associated linearized problems exhibit inhomogeneities that would impede flushing the magnetic field as suggested by the known return method argument for incompressible Euler. So far, only \cite{RisselWang2021,KukavicaNovackVicol2022} have investigated the controllability of ideal MHD, where \cite{KukavicaNovackVicol2022} improves \cite{RisselWang2021} by removing or characterizing an undesired bulk force. However, both studies utilize special shear profiles only accessible in rectangular channels steered from two opposing walls. Further, the interior controllability has apparently not been addressed in the ideal MHD literature yet.
	
	In this article it is observed that interior controls appear as natural building blocks when devising a method that suits curved boundaries, thus we can provide a first interior controllability result for incompressible ideal MHD while not being limited to straight channels. In the present case, the controls' support intersects each connected component of the boundary. Since we cover now rather general geometries, crucial arguments used in \cite{RisselWang2021,KukavicaNovackVicol2022} are unavailable: a) in \cite{KukavicaNovackVicol2022} the incompressible ideal MHD system is solved near a spatially constant shear velocity by using cancellations that only occur because that shear profile depends merely on time; b) in \cite{KukavicaNovackVicol2022}, the magnetic field is automatically flushed by zero data imposed outside the physical domain, while in the present situation a complicated mixture of regions, those where the magnetic field vanishes and those where it is supported, would be circulated around the cavity (see also \Cref{Figure:flushing0} in \Cref{subsection:flushingprofile}).
	Another intricacy consists of devising an Ansatz for the controls, motivations being drawn from the proof of a lemma in \cite[Appendix]{CoronMarbachSueur2020}, where controls for transport problems are described by means of uncontrolled linear equations with localized initial data. In stark contrast, our controls are assembled from basic building blocks at a nonlinear level; furthermore, the constraints in~\eqref{equation:intrprctrl} must be upheld. These building blocks are derived from nonlinear velocity-controlled ideal MHD sub-problems, individually solved by resorting to a return method argument that improves the ideal MHD adaptation \cite{RisselWang2021} of \cite{Coron1996EulerEq}. In this respect, good estimates for a fixed point map rely on a time-weighted norm from \cite{RisselWang2021} and an everywhere-curl-free choice of the return method profile for the velocity; indeed, to ensure~\eqref{equation:intrprctrl} with a control zone~$\omegaup$ of arbitrarily small area, the magnetic field problem has to be properly solved even in~$\omegaup$. When looking at the controlled evolution described by a finite combination of building blocks set on consecutive time intervals, the magnetic field will be deleted in a small \enquote{sector} and erased parts spread downstream, thanks to a frozen-in flux phenomenon; after a short time, the procedure will be restarted with an initial magnetic field having reduced support. Localized disturbances, generated either by a regularity corrector or by an unwanted source term, are eventually suppressed with the help of a local flushing mechanism. A finite number of steps suffices to neutralize the entire magnetic field, and it remains to solve a controlled incompressible Euler problem.

	Several natural questions are left unanswered.  1) How to treat general domains in~$2$D? When the~magnetic field is dragged around multiple cavities, the building blocks for the controls, and the way they are combined, would be more complex as for the situation described here. 2) What versions, or obstructions, of Theorems~\Rref{theorem:maininterior}~and~\Rref{theorem:main} could be expected in $3$D? Due to possibly involved topological features of the field lines in three dimensions, new developments are required to localize the~magnetic~field control, and to characterize admissible initial and target states for which this is possible. 3) Are the requirements on~$\omegaup$ optimal (\cf~\Cref{problem:p1})? For having exact controllability, the control region needs to intersect all~connected components of the boundary, as explained for incompressible Euler in \cite{Coron1996EulerEq}; however, it still might seem restrictive to assume that~$\omegaup$ contains a set $\Lambda$ with~$\overline{\mathcal{E}}\setminus\Lambda$ being simply-connected. 4) It also remains open whether magnetic fields having equal nontrivial first cohomology projections can be connected (see~\Cref{problem:i}, \cite{KukavicaNovackVicol2022}). 
	
	\begin{figure}[ht!]\centering\resizebox{0.85\textwidth}{!}{\centering 
			\begin{tikzpicture}[node distance=0.5cm]
				\node (A) [blue0] {\Large\Cref{theorem:maininterior} (main result I)};
				\node (AB) [right = 1.2cm of A] {};
				\node (B) [blue0, right = 0.8cm of A] {\Large\Cref{theorem:main} (main result II)};
				
				\node (C) [blue0, below = 0.8cm of A] {\Large\Cref{theorem:local} \\ (local = global)};

				\node (ED) [below = 0.8cm of C] {};
				
				\node (E) [blue00c, below = 0.8cm of C] {\Large{\Cref{theorem:tkloc} and \Cref{subsubsection:piecingtogether}} \\ (divide-and-control)};
				\node (D) [blue0, left = 0.8cm of E] {\Large\Cref{theorem:ret}
					(return-method)};

				\draw [arrow,line width=0.5mm] (C) -- (A);
				\draw [arrow,line width=0.5mm] (A) -- (B);
				\draw [arrow,line width=0.5mm] (D) -- (E);
				\draw [arrow,line width=0.5mm] (D) -- (C);
				\draw [arrow,line width=0.5mm] (E) -- (C);
		\end{tikzpicture}}
		\caption{Relationships between the theorems appearing in this article.}
		\label{Figure:at}
	\end{figure}
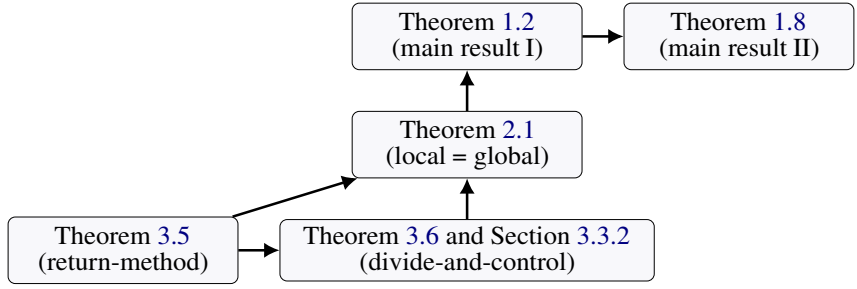
	
	\subsection{Organization}
	In \Cref{section:hysc}, scaling and time-reversibility properties allow reducing \Cref{theorem:maininterior} to the local exact null controllability furnished by \Cref{theorem:local}. \Cref{section:local} then demonstrates \Cref{theorem:local}. Hereto, the interior controls are obtained in \Cref{subsubsection:piecingtogether} through a combination of Theorems~\Rref{theorem:ret}~and~\Rref{theorem:tkloc}; see also \Cref{Figure:at}.

	\section{Local implies global}\label{section:hysc}
	We recast \Cref{theorem:maininterior} as the following local exact null controllability result with the fixed reference time $T = 2$. Its proof will be the topic of \Cref{section:local}; in particular, the argument is concluded in \Cref{subsubsection:piecingtogether}. For simplicity, $\xCn{{m,\alpha}}_{*}(\overline{\mathcal{E}}; \mathbb{R}^2) \coloneq \xCn{{m,\alpha}}_{*}(\overline{\mathcal{E}}, \emptyset; \mathbb{R}^2)$. 
	\begin{thrm}\label{theorem:local}
		There exists a (possibly small) constant $\delta > 0$ such that, for all initial states $\xvec{u}_0, \xvec{B}_0 \in \xCn{{m,\alpha}}_{*}(\overline{\mathcal{E}}; \mathbb{R}^2)$ obeying the assumptions of \Cref{theorem:maininterior} and
		satisfying
		\begin{equation}\label{equation:smallnesscond}
			\|\xvec{u}_0\|_{m,\alpha,\mathcal{E}} + \|\xvec{B}_0\|_{m,\alpha,\mathcal{E}} < \delta,
		\end{equation}
		there are controls
		\begin{gather*}
			\xsym{\xi}, \xsym{\eta} \in \xLinfty((0,2);\xCn{{m-1,\alpha}}(\overline{\mathcal{E}};\mathbb{R}^2)), \quad
			\operatorname{supp}(\xsym{\xi}) \cup \operatorname{supp}(\xsym{\eta}) \subset \omegaup\times [0,2]
		\end{gather*}
	 	ensuring that
		\begin{equation}\label{equation:MHD00scaling}
			\begin{cases}
				\partial_t \xvec{u} + (\xvec{u} \cdot \xsym{\nabla}) \xvec{u} - (\xvec{B} \cdot \xsym{\nabla})\xvec{B} + \xsym{\nabla} p = \mathbb{I}_{\omegaup}\xsym{\xi} & \mbox{ in } \mathcal{E} \times (0,2),\\
				\partial_t \xvec{B} + (\xvec{u} \cdot \xsym{\nabla}) \xvec{B} - (\xvec{B} \cdot \xsym{\nabla}) \xvec{u} = \mathbb{I}_{\omegaup}\xsym{\eta} & \mbox{ in } \mathcal{E} \times (0,2),\\
				\xdiv{\xvec{u}} = \xdiv{\xvec{B}} = 0  & \mbox{ in } \mathcal{E} \times (0,2),\\
				\xvec{u} \cdot \xvec{n}_{\mathcal{E}} = \xvec{B} \cdot \xvec{n}_{\mathcal{E}} = 0  & \mbox{ on } \partial\mathcal{E} \times(0,2),\\
				\xvec{u}(\cdot, 0)  =  \xvec{u}_0,\, \xvec{B}(\cdot, 0)  =  \xvec{B}_0  & \mbox{ in } \mathcal{E},
			\end{cases}
		\end{equation}
		admits a solution
		\begin{equation*}
			\begin{gathered}
				\xvec{u}, \xvec{B} \in \xCn{0}([0,2];\xCn{{m-1,\alpha}}(\overline{\mathcal{E}}; \mathbb{R}^2)) \cap \xLinfty((0,2);\xCn{{m,\alpha}}(\overline{\mathcal{E}}; \mathbb{R}^2)), \\
				p \in \xCn{0}([0,2];\xCn{{m-1,\alpha}}(\overline{\mathcal{E}}; \mathbb{R})) \cap \xLinfty((0,2);\xCn{{m,\alpha}}(\overline{\mathcal{E}}; \mathbb{R})),
			\end{gathered}
		\end{equation*}
		meeting the target constraints
		\begin{equation*}\label{equation:MHD-Endconditionlocal}
			\begin{aligned}
				\xvec{u}(\cdot, 2)  =  \xsym{0}, \quad \xvec{B}(\cdot, 2)  =  \xsym{0}.
			\end{aligned}
		\end{equation*}
	\end{thrm}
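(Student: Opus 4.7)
The plan is to establish Theorem \ref{theorem:local} by combining a return-method building block (Theorem \ref{theorem:ret}) with a divide-and-control decomposition (Theorem \ref{theorem:tkloc} together with the piecing-together argument of Subsection \ref{subsubsection:piecingtogether}), as already signaled in Figure \ref{Figure:at}. The smallness assumption \eqref{equation:smallnesscond} gives us room to amplify a fixed reference profile while keeping the nonlinear corrections small.

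\textbf{Step 1 (Reference profile).} I would select, as in Subsection \ref{subsection:flushingprofile}, a curl-free, divergence-free velocity $\overline{\xvec{u}}$ on $\mathcal{E}$, tangent to $\partial\mathcal{E}$, whose integral curves form a homotopy between $\Gamma^0$ and $\Gamma^1$ and pass through $\omegaup$. The fact that $\overline{\xvec{u}}$ is everywhere curl-free is crucial: it prevents the linearized MHD dynamics from generating parasitic vorticity in the complement of $\omegaup$, which would otherwise couple back into the magnetic equation and spoil any downstream cancellation. With $N\overline{\xvec{u}}$ taken as an amplified return trajectory over the fixed time window $(0,2)$, every fluid particle in $\mathcal{E}$ is swept through the control region at least once.

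\textbf{Step 2 (Nonlinear velocity-controlled sub-problem).} Theorem \ref{theorem:ret} solves a single sub-problem: given small data, produce a solution $(\xvec{u},\xvec{B},p)$ of \eqref{equation:MHD00scaling} on a short interval that stays close to $N\overline{\xvec{u}}$ and realizes a prescribed advected image of the magnetic field outside $\omegaup$. I would set up a fixed point in the time-weighted Hölder space from \cite{RisselWang2021}, which absorbs the regularity-loss issues caused by the nonlinear coupling. Closed estimates are obtained at the level of the Elsässer-type symmetrized variables $\xvec{u}\pm\xvec{B}$, transporting each symmetrized field along its own characteristics; Kelvin's circulation and the first-cohomology component of the momentum equation are driven by $\mathbb{I}_\omegaup\xsym{\xi}$ (cf. Remark \ref{remark:rr}), and the magnetic control $\mathbb{I}_\omegaup\xsym{\eta}$ is constructed so as to respect the structural constraint \eqref{equation:intrprctrl}.

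\textbf{Step 3 (Divide-and-control assembly).} Following Theorem \ref{theorem:tkloc} and Subsection \ref{subsubsection:piecingtogether}, I would split the initial magnetic field into finitely many pieces $\xvec{B}_0=\sum_k \xvec{B}_0^k$ of small support, and treat them sequentially on consecutive sub-intervals of $(0,2)$. On each sub-interval, the building block from Step 2 advects the remaining magnetic field along the curves of $\overline{\xvec{u}}$; because of the frozen-in flux property, the portion that enters $\omegaup$ is erased in place, while the complementary support shrinks by a definite fraction per cycle. A local flushing mechanism cleans up the small remainders created by regularity correctors or bulk forcing. After finitely many cycles the magnetic field is identically zero, and one is left with a purely hydrodynamic incompressible Euler null-controllability problem with small data, solvable by Coron's return method for Euler (cf. \cite{Coron1996EulerEq,Glass2000}). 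The preserved cohomology condition \eqref{equation:conditionq} is compatible with this scheme because the first-cohomology projection of $\xvec{B}$ is conserved by the sub-problem dynamics, while the analogous piece of the momentum equation is controlled directly through $\xsym{\xi}$.

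\textbf{Main obstacle.} The crux is reconciling two opposing requirements on $\mathbb{I}_\omegaup\xsym{\eta}$: it must obey the hard structural constraint \eqref{equation:intrprctrl}, yet be strong enough to genuinely delete the magnetic field \emph{inside} $\omegaup$ without depositing a persistent remainder once the advection carries matter back out. Simply truncating is not an option—the MHD sub-problem has to be properly solved on all of $\mathcal{E}$, including $\omegaup$—and this is precisely where the curl-free amplification in Step 1 earns its keep, together with the time-weighted norm of Step 2 that keeps the nonlinear correction subordinate to the return-method trajectory uniformly across the many sub-interval concatenations of Step 3.
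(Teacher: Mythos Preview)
Your high-level architecture matches the paper: a curl-free return profile $\xvec{y}^*$ (your $\overline{\xvec{u}}$), a fixed-point in Elsässer variables with a time-weighted norm, an iterative annihilation of the magnetic field, and a final Euler step. Your ``Main obstacle'' paragraph correctly identifies the crux. However, two specifics are scrambled in a way that would break the argument if written out.

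First, you misplace the magnetic control. In the paper, Theorem~\ref{theorem:ret} carries \emph{no} magnetic forcing: it produces a solution $(\xvec{V},\xvec{H},P,\xsym{\Xi})$ of \eqref{equation:MHD00unforcedlargedomain} with $\xsym{\eta}\equiv\xsym{0}$, close to $(\xvec{y}^*,\xsym{0})$, using only a velocity control $\xsym{\Xi}$ supported in $\Lambda$. The magnetic control $\widetilde{\xsym{\eta}}$ enters only in Theorem~\ref{theorem:tkloc}, as a \emph{post-processing} of that trajectory: one splits the stream function of $\xvec{H}$ by a two-element partition of unity $\{\mu_1,\mu_2\}$ localized near and away from $\Lambda$, sets $\widetilde{\xvec{B}}=\beta\xvec{H}^1+\xvec{H}^2$ (minus a regularity corrector in one version), and reads off $\widetilde{\xsym{\eta}}$ from the induction equation. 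Because everything is expressed through stream functions vanishing on $\partial\mathcal{E}$, the constraint \eqref{equation:intrprctrl} is automatic via $\widetilde{\xsym{\eta}}=\xnab^{\perp}\widetilde{\phi}$ with $\widetilde{\phi}|_{\partial\mathcal{E}}=0$. Your Step~2 folds this into the return-method sub-problem itself, which obscures how the divergence/trace constraint is actually met.

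Second, the decomposition $\xvec{B}_0=\sum_k\xvec{B}_0^k$ into many small-support pieces treated sequentially is not what the paper does and would not close as stated: the MHD system is nonlinear, so a piece $\xvec{B}_0^k$ has no well-defined descendant once the other pieces have interacted with it and with the velocity. The paper instead iterates Theorem~\ref{theorem:tkloc} $K$ times on successive intervals $[a_j,b_j]$, each time feeding in the \emph{full} end-state of the previous step. On each step the control deletes $\widetilde{\xvec{B}}$ in $\mathcal{N}_{d_\Lambda/2}(\Lambda)$ (property \eqref{equation:suppcondB}), and the local flushing mechanism \eqref{equation:suppcondB3} guarantees that previously cleared regions, pushed forward by $\xmcal{Y}^*$, remain cleared. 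The vanishing set grows by a definite amount per step until it covers $\overline{\mathcal{E}}$; no a priori decomposition of $\xvec{B}_0$ is involved. Your later sentence (``the portion that enters $\omegaup$ is erased in place, while the complementary support shrinks'') is much closer to the actual mechanism than the formula $\xvec{B}_0=\sum_k\xvec{B}_0^k$, so the gap may be expository rather than conceptual---but as written, Step~3 does not compile into a proof.
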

	
	To see that \Cref{theorem:local} already implies \Cref{theorem:maininterior} for any control time $T > 0$, assume that $\xvec{u}$, $\xvec{B}$, and $p$ fulfill for some $0 < r < s < +\infty$ the relations
	\begin{equation}\label{equation:mhdwoini}
		\begin{cases}
			\partial_t \xvec{u} + (\xvec{u} \cdot \xsym{\nabla}) \xvec{u} - (\xvec{B} \cdot \xsym{\nabla})\xvec{B} + \xsym{\nabla} p = \mathbb{I}_{\omegaup}\xsym{\xi} & \mbox{ in } \mathcal{E} \times (r,s),\\
			\partial_t \xvec{B} + (\xvec{u} \cdot \xsym{\nabla}) \xvec{B} - (\xvec{B} \cdot \xsym{\nabla}) \xvec{u} = \mathbb{I}_{\omegaup}\xsym{\eta} & \mbox{ in } \mathcal{E} \times (r,s).
		\end{cases}
	\end{equation}
	If $t \in (r,s)$ and $\overline{t}\in(r/\epsilon,s/\epsilon)$ for some $\epsilon > 0$, the system \eqref{equation:mhdwoini} is likewise satisfied by the time-reversed and scaled profiles
	\begin{gather*}\label{equation:IdealControl:reverse}
			\xvec{u}^-(\xvec{x},t) \coloneq - \xvec{u}(\xvec{x}, r+s-t), \quad
			\xvec{u}^{\epsilon}(\xvec{x},\overline{t}) \coloneq  \epsilon \xvec{u}(\xvec{x}, \epsilon \overline{t}), \\
			\xvec{B}^-(\xvec{x},t) \coloneq - \xvec{B}(\xvec{x}, r+s-t)	, \quad \xvec{B}^{\epsilon}(\xvec{x},\overline{t}) \coloneq  \epsilon \xvec{B}(\xvec{x}, \epsilon \overline{t}), \\
			p^-(\xvec{x},t) \coloneq  p(\xvec{x}, r+s-t), \quad p^{\epsilon}(\xvec{x},\overline{t}) \coloneq \epsilon^2 p(\xvec{x}, \epsilon \overline{t}),\\
			\xsym{\xi}^-(\xvec{x},t) \coloneq  \xsym{\xi}(\xvec{x}, r+s-t), \quad
			\xsym{\xi}^{\epsilon}(\xvec{x},\overline{t}) \coloneq \epsilon^2 \xsym{\xi}(\xvec{x}, \epsilon \overline{t}),\\
			\xsym{\eta}^-(\xvec{x},t) \coloneq  \xsym{\eta}(\xvec{x}, r+s-t), \quad \xsym{\eta}^{\epsilon}(\xvec{x},\overline{t}) \coloneq \epsilon^2 \xsym{\eta}(\xvec{x}, \epsilon \overline{t}).
	\end{gather*}
	Anticipating that \Cref{theorem:local} holds with some $\delta > 0$, we take any sufficiently small parameter $\epsilon = \epsilon(\delta) \in (0,T/4)$ and define for the data $\xvec{u}_0, \xvec{B}_0, \xvec{u}_T, \xvec{B}_T \in \xCn{{m,\alpha}}_{*}(\overline{\mathcal{E}}; \mathbb{R}^2)$ from \Cref{theorem:maininterior} the scaled versions
	\begin{equation*}
		\begin{gathered}
			\widetilde{\xvec{u}}_0^{\epsilon} \coloneq \epsilon\xvec{u}_0, \quad \widetilde{\xvec{B}}_0^{\epsilon} \coloneq \epsilon\xvec{B}_0, \quad \widehat{\xvec{u}}_0^{\epsilon} \coloneq -\epsilon\xvec{u}_T, \quad \widehat{\xvec{B}}_0^{\epsilon} \coloneq -\epsilon\xvec{B}_T, \\
			\|\widetilde{\xvec{u}}^{\epsilon}_0\|_{m, \alpha, \mathcal{E}} + \|\widetilde{\xvec{B}}^{\epsilon}_0\|_{m, \alpha, \mathcal{E}} + \|\widehat{\xvec{u}}^{\epsilon}_0\|_{m, \alpha, \mathcal{E}} + \|\widehat{\xvec{B}}^{\epsilon}_0\|_{m, \alpha, \mathcal{E}} < \delta.
		\end{gathered}
	\end{equation*}
	Accordingly, there are controls $\widetilde{\xsym{\xi}}^{\epsilon}, \widehat{\xsym{\xi}}^{\epsilon}, \widetilde{\xsym{\eta}}^{\epsilon}, \widehat{\xsym{\eta}}^{\epsilon} \in \xLinfty((0,2);\xCn{{m-2,\alpha}}(\overline{\mathcal{E}};\mathbb{R}^2))$
	for which the system \eqref{equation:MHD00scaling} admits respective solutions
	\begin{equation*}
		\begin{gathered}
			\widetilde{\xvec{u}}^{\epsilon}, \widetilde{\xvec{B}}^{\epsilon}, \widehat{\xvec{u}}^{\epsilon}, \widehat{\xvec{B}}^{\epsilon} \in  \xCn{0}([0,2];\xCn{{m-1,\alpha}}(\overline{\mathcal{E}}; \mathbb{R}^2)) \cap \xLinfty((0,2);\xCn{{m,\alpha}}(\overline{\mathcal{E}}; \mathbb{R}^2)), \\ \widetilde{p}^{\epsilon}, \widehat{p}^{\epsilon} \in \xCn{0}([0,2];\xCn{{m-1,\alpha}}(\overline{\mathcal{E}}; \mathbb{R})) \cap \xLinfty((0,2);\xCn{{m,\alpha}}(\overline{\mathcal{E}}; \mathbb{R})),
		\end{gathered}
	\end{equation*}
	satisfying in $\mathcal{E}$ the initial and target conditions
	\begin{equation*}\label{equation:MHD-Endconditionlocalb}
		\begin{gathered}
			\widetilde{\xvec{u}}^{\epsilon}(\cdot, 0) = \widetilde{\xvec{u}}^{\epsilon}_0, \quad \widetilde{\xvec{B}}^{\epsilon}(\cdot, 0) = \widetilde{\xvec{B}}^{\epsilon}_0, \quad \widehat{\xvec{u}}^{\epsilon}(\cdot, 0) = \widehat{\xvec{u}}^{\epsilon}_0, \quad \widehat{\xvec{B}}^{\epsilon}(\cdot, 0) = \widehat{\xvec{B}}^{\epsilon}_0, \\ \widetilde{\xvec{u}}^{\epsilon}(\cdot, 2) = \widehat{\xvec{u}}^{\epsilon}(\cdot, 2) = \xsym{0}, \quad \widetilde{\xvec{B}}^{\epsilon}(\cdot, 2) = \widehat{\xvec{B}}^{\epsilon}(\cdot, 2) = \xsym{0}.
		\end{gathered}
	\end{equation*}
	As explained in \cite{RisselWang2021} (\cf~\cite{Coron1996EulerEq,Glass2000} for the Euler system), gluing re-scaled forward and backward trajectories yields \Cref{theorem:maininterior}. Indeed, since $2\epsilon < T/2$, one may recover a suitable solution to \eqref{equation:MHD00scaling} by defining
	\[
		(\xvec{u}, \xvec{B}, p)(\xvec{x},t) \coloneq \begin{cases}
			 (\epsilon^{-1}\widetilde{\xvec{u}}^{\epsilon}, \epsilon^{-1}\widetilde{\xvec{B}}^{\epsilon}, \epsilon^{-2} \widetilde{p}^{\epsilon})(\xvec{x}, \epsilon^{-1} t) & \mbox{ if } t \in [0,2\epsilon],\\
			(\xsym{0}, \xsym{0}, 0) & \mbox{ if } t \in (2\epsilon,T-2\epsilon),\\
				 (-\epsilon^{-1}\widehat{\xvec{u}}^{\epsilon}, -\epsilon^{-1}\widehat{\xvec{B}}^{\epsilon},\epsilon^{-2} \widehat{p}^{\epsilon})(\xvec{x}, \epsilon^{-1}(T-t)) & \mbox{ if } t \in [T-2\epsilon, T],
		\end{cases}
	\]
	and the controls
	\[
	(\xsym{\xi}, \xsym{\eta})(\xvec{x},t) \coloneq \begin{cases}
		(\epsilon^{-2} \widetilde{\xsym{\xi}}^{\epsilon}, \epsilon^{-2} \widetilde{\xsym{\eta}}^{\epsilon})(\xvec{x}, \epsilon^{-1} t) & \mbox{ if } t \in [0,2\epsilon],\\
		(\xsym{0}, \xsym{0}) & \mbox{ if } t \in (2\epsilon,T-2\epsilon),\\
		(\epsilon^{-2} \widehat{\xsym{\xi}}^{\epsilon}, 	\epsilon^{-2} \widehat{\xsym{\eta}}^{\epsilon})(\xvec{x}, \epsilon^{-1}(T-t)) & \mbox{ if } t \in [T-2\epsilon, T].
	\end{cases}
	\]
	\begin{rmrk}
		The specific choice $T = 2$ in \Cref{theorem:local} is not reflecting technical reasons; it simply lightens up several notations in \Cref{section:local}. 
	\end{rmrk}
	
	\section{Exact null controllability of small solutions}\label{section:local}
	This section is devoted to proving \Cref{theorem:local}. The basic setup and a conservation property are given in \Cref{subsection:setup}, a return method profile is provided in \Cref{subsection:flushingprofile}, and \Cref{theorem:local} will be concluded in \Cref{subsection:nonlineardecomposition} (\cf~\Cref{subsubsection:piecingtogether}). 
	As the analysis will involve transport problems, we begin here with some observations. Consider a controlled ideal MHD solution
	\[
		\xvec{u} = [u_1, u_2], \quad \xvec{B} = [B_1, B_2], \quad p, \quad \xsym{\xi} = [\xi_1, \xi_2], \quad \xsym{\eta} = [\eta_1, \eta_2]
	\] 
	to \eqref{equation:MHD00scaling}; but, on a general time interval $(0, \widetilde{T})$ with $\widetilde{T} > 0$. Then, each~$B_i$ satisfies the transport equation
	\begin{equation}\label{equation:transport}
			\partial_t v + (\xvec{z} \cdot \xnab)v = g, \quad v(\cdot, 0) = v_0,
	\end{equation}
	where $v \coloneq B_i$, $\xvec{z} \coloneq \xvec{u}$, $g \coloneq (\xvec{B} \cdot \xnab) u_i +  \mathbb{I}_{\omegaup}\eta_i$, and $v_0 \coloneq B_i(\cdot, 0)$. As a consequence, one can~study the evolution of $\operatorname{supp}(\xvec{B})$ governed by the flow~$\xmcal{Z}$ associated with~$\xvec{z}$ (\cf~\Cref{lemma:mgftrsp} below). In particular, since $\xvec{z}$ is tangential at $\partial \mathcal{E}$, the method of characteristics provides for all $t$ the formula
	\begin{equation}\label{equation:moc}
		v(\xvec{x},t) = v_0(\xmcal{Z}(\xvec{x},t,0)) + \int_0^t g(\xmcal{Z}(\xvec{x},t,s),s) \, \xdx{s}.
	\end{equation}
	Moreover, given $j \in \{0, 1, 2, \dots\}$ and any
	\begin{gather*}
			v_0 \in  \xCn{{j,\alpha}}(\overline{\mathcal{E}};\mathbb{R}), \quad \xvec{z} \in \xCn{{j+1,\alpha}}_{*}(\overline{\mathcal{E}}; \mathbb{R}^2), \quad
			g \in \xCzero([0,\widetilde{T}]; \xCn{{j, \alpha}}(\overline{\mathcal{E}}; \mathbb{R}^2)),
	\end{gather*}
	one obtains from \eqref{equation:moc} that the solution to \eqref{equation:transport} obeys
	\begin{equation}\label{equation:alphabeta}
		v \in \xCzero([0,\widetilde{T}]; \xCn{{j,\beta}}(\overline{\mathcal{E}};\mathbb{R}))\cap\xLinfty((0,\widetilde{T}); \xCn{{j,\alpha}}(\overline{\mathcal{E}};\mathbb{R})), \quad \beta \in (0, \alpha).
	\end{equation}
	Indeed, when assuming for simplicity $g = 0$, the $\beta$-semi-norm of $v(\cdot, t) - v(\cdot, s)$ is bounded for any $0 \leq t \leq s$ by
	\begin{equation*}
		\begin{multlined}
			\sup\limits_{\xvec{x} \neq \xvec{y}} \Bigg[ \left(\frac{\left|v_0(\xmcal{Z}(\xvec{x},t,0))-v_0(\xmcal{Z}(\xvec{y},t,0))-v_0(\xmcal{Z}(\xvec{x},s,0)) + v_0(\xmcal{Z}(\xvec{y},s,0))\right|}{|\xvec{x}-\xvec{y}|^{\alpha}}\right)^{\beta/\alpha} \\
			\times \left|v_0(\xmcal{Z}(\xvec{x},t,0))-v_0(\xmcal{Z}(\xvec{y},t,0))-v_0(\xmcal{Z}(\xvec{x},s,0)) + v_0(\xmcal{Z}(\xvec{y},s,0))\right|^{1-\beta/\alpha}\Bigg],
		\end{multlined}
	\end{equation*}
	which tends to zero as $|t-s| \longrightarrow 0$. The case $g \neq 0$ works similarly, and, by taking derivatives in \eqref{equation:moc}, one inductively obtains by the same approach that
	\[
		\| v(\cdot, t) - v(\cdot, s) \|_{j, \beta, \mathcal{E}} \longrightarrow 0 \mbox{ as } |t-s| \longrightarrow 0.
	\]
	To get an intutition why \eqref{equation:alphabeta} might not hold for $\beta = \alpha$, consider the linear transport equation $\partial_t v + \partial_x v = 0$, first on $\mathbb{R}\times[0,\widetilde{T}]$ with initial condition $v(x, 0) = |x|^{\alpha}$ for~$x \in \mathbb{R}$. Then, given any $t > 0$, a lower bound for the $\alpha$-semi-norm of $v(\cdot, t) - v(\cdot, 0)$ follows via
	\[
			\sup\limits_{{x} \neq {y}}  \frac{\left||x-t|^{\alpha} - |y-t|^{\alpha} - |x|^{\alpha} + |y|^{\alpha} \right|}{|x-y|^{\alpha}} \! \geq \! \frac{\left||t-t|^{\alpha} - |(-t)-t|^{\alpha} - |t|^{\alpha} + |(-t)|^{\alpha} \right|}{|t-(-t)|^{\alpha}} = 1.
	\]
	Since $t > 0$ was arbitrary, this example also works for transport equations posed in bounded domains. 
	
	\subsection{Setup}\label{subsection:setup}
	Let the reference control time be $T = 2$ and denote $\xvec{n} \coloneq \xvec{n}_{\mathcal{E}}$, for simplicity. We suppose that $\xvec{u}_0, \xvec{B}_0 \in \xCn{{m,\alpha}}_{*}(\overline{\mathcal{E}}; \mathbb{R}^2)$ meet the hypotheses of \Cref{theorem:local}, including a smallness constraint of the form (\cf~\eqref{equation:smallnesscond})
	\begin{equation*}
		\|\xvec{u}_0\|_{m,\alpha,\mathcal{E}} + \|\xvec{B}_0\|_{m,\alpha,\mathcal{E}} < \delta,
	\end{equation*}
	where $\delta$ shall be determined later (\cf~\eqref{equation:delta}). Further, let $\Lambda_0 \subset \omegaup$ be open, simply-connected, and such that $\mathcal{E}\setminus \Lambda_0$ is simply-connected. Consequently, there exists a smooth curve (smooth cut) $\Sigma \subset \Lambda_0$ rendering the difference $\mathcal{E}\setminus\Sigma$ simply-connected (\cf~\Cref{Figure:extension}).  Then, we take the closure $\Lambda \coloneq \overline{\Lambda}_0$ and define a rough lower bound for the \enquote{thickness} of $\Lambda$ by
	\begin{equation}\label{equation:dl}
		d_{\Lambda} \coloneq \operatorname{dist}(\Sigma, \partial \Lambda\setminus\partial\mathcal{E}).
	\end{equation}
	Without loss of generality, up to adjusting the size of $\Lambda_0$, it is assumed that
	\begin{equation}\label{equation:lmbddist}
		\operatorname{dist}(\Lambda, \overline{\mathcal{E}}\setminus \omegaup) > 4d_{\Lambda}.
	\end{equation}
	\tikzset{>={Latex[width=1.5mm,length=1.5mm]}}
	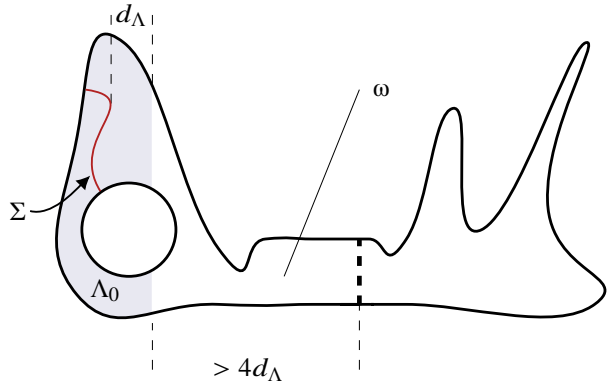
\begin{figure}[ht!]
		\centering
		\resizebox{0.65\textwidth}{!}{
			\begin{tikzpicture}
				\clip(-3.35,-0.69) rectangle (3.3,3.5);

				\draw[line width=0.3mm, color=MidnightBlue!10, fill=MidnightBlue!10] plot[smooth, tension=1] coordinates { (0.5,0.2) (0,0.2) (-1,0.2) (-2.5,0.3) (-2.5,2) (-2,3) (-1,0.8) (-0.5,0.9) (0,0.9) (0.3,0.9) (0.6,0.9) (1,0.9) (1.5,2.3) (1.8,1) (2.9,3) (2.5,1.3)  (3,0.2) (1,0.2) (0.6,0.2) (0.3,0.2)};

				\filldraw[color=white, fill=white] (-1.7,0) rectangle (6.8,4);
				
				\draw[line width=0.3mm, color=black] plot[smooth, tension=1] coordinates { (0.5,0.2) (0,0.2) (-1,0.2) (-2.5,0.3) (-2.5,2) (-2,3) (-1,0.8) (-0.5,0.9) (0,0.9) (0.3,0.9) (0.6,0.9) (1,0.9) (1.5,2.3) (1.8,1) (2.9,3) (2.5,1.3)  (3,0.2) (1,0.2) (0.6,0.2) (0.3,0.2)};

				\filldraw[color=black, fill=white, line width=0.3mm,  fill opacity=1] (-1.95,1) circle (0.5);

				\draw[line width=0.1mm, dashed]  (-2.139,3.3) -- (-2.139,2.35);
				\draw[line width=0.1mm, dashed]  (-1.7,3.3) -- (-1.7,2.55);

				\draw[line width=0.2mm, color=FireBrick] (-2.41,2.5)  to [out=-8,in=-238,looseness=1.6] (-2.253,1.412);

				\coordinate[label=left:\scriptsize$\Sigma$] (A) at (-2.9,1.2);
				\draw[line width=0.2mm, ->] (-3,1.2)  to [out=-15,in=-135,looseness=0.8] (-2.36,1.58);
				\coordinate[label=left:\scriptsize{$\Lambda_0$}] (B) at (-1.9,0.35);
				
				\draw[dashed,line width=0.5mm, color=black] (0.5,0.19)--(0.5,0.9);

				\coordinate[label=right:\scriptsize{$\mathcal{\omegaup}$}] (C) at (0.5,2.5);
				\draw[line width=0.1mm] (-0.3,0.5) -- (C);

				\coordinate[label=right:\scriptsize{$d_{\Lambda}$}] (E) at (-2.23,3.3);

				\draw[line width=0.1mm, dashed]  (-1.7,-0.5) -- (-1.7,0);
				\draw[line width=0.1mm, dashed]  (0.5,-0.5) -- (0.5,0.19);
				\coordinate[label=right:\scriptsize{$> 4d_{\Lambda}$}] (E) at (-1.2,-0.5);
			\end{tikzpicture}
		}
		\caption{The (blue) filled region indicates a choice of $\Lambda_0$ and the smooth curve $\Sigma \subset \Lambda_0$, represented by a (red) line, is selected such that $\mathcal{E}\setminus\Sigma$ is simply-connected.}
		\label{Figure:extension}
	\end{figure}

	To demonstrate \Cref{theorem:local}, we consider the interior-controlled ideal MHD system with initial and target conditions
	\begin{equation}\label{equation:MHD00SimplyConnectedExtended}
		\begin{cases}
			\partial_t \xvec{u} + (\xvec{u} \cdot \xsym{\nabla}) \xvec{u} - (\xvec{B} \cdot \xsym{\nabla})\xvec{B} + \xsym{\nabla} p = \xsym{\xi} & \mbox{ in } \mathcal{E} \times (0,2),\\
			\partial_t \xvec{B} + (\xvec{u} \cdot \xsym{\nabla}) \xvec{B} - (\xvec{B} \cdot \xsym{\nabla}) \xvec{u} = \xsym{\eta} & \mbox{ in } \mathcal{E} \times (0,2),\\
			\xdiv{\xvec{u}} = \xdiv{\xvec{B}} = 0  & \mbox{ in } \mathcal{E} \times (0,2),\\
			\xvec{u} \cdot \xvec{n} = \xvec{B} \cdot \xvec{n} = 0  & \mbox{ on } \partial\mathcal{E} \times(0,2),\\
			\xvec{u}(\cdot, 0)  =  \xvec{u}_0,\, \xvec{B}(\cdot, 0)  =  \xvec{B}_0  & \mbox{ in } \mathcal{E},\\
			\xvec{u}(\cdot, 2)  =  \xsym{0},\, \xvec{B}(\cdot, 2) = \xsym{0}  & \mbox{ in } \mathcal{E},
		\end{cases}
	\end{equation}
	where $\xsym{\xi}$ and $\xsym{\eta}$ shall be obtained so that
	\begin{equation}\label{equation:propeta}
		\begin{gathered}
			\xdiv{\xsym{\eta}} = 0 \mbox{ in } \mathcal{E}, \quad \xsym{\eta}\cdot \xvec{n} = 0 \mbox{ on } \partial \mathcal{E},\\
			\operatorname{supp}(\xsym{\xi}) \subset \omegaup\times[0,2], \quad 	\operatorname{supp}(\xsym{\eta}) \subset \omegaup \times (0,1).
		\end{gathered}
	\end{equation}
	In addition to \eqref{equation:propeta}, the subsequent analysis leads to a choice of $\xsym{\eta}$ with vanishing first cohomology projection. Expressly, we will have the representation
	\begin{equation}\label{equation:propeta2}
		\xsym{\eta} = \xnab^{\perp} \phi = \begin{bmatrix}
			\partial_2 \phi \\ -\partial_1 \phi
		\end{bmatrix} \, \mbox{ in } \mathcal{E}, \quad \phi = 0 \mbox{ on } \partial \mathcal{E}.
	\end{equation}

    As visible from \eqref{equation:propeta}, the control~$\xsym{\eta}$ is scheduled  during the first half of the reference time interval~$[0,2]$. The reason is that~$\xvec{B}$ will already be annihilated until~$t=1$, with the help of~$\xsym{\xi}$ and~$\xsym{\eta}$. For~$t\in[1,2]$, it remains to regulate a velocity-controlled incompressible Euler problem by means of~$\xsym{\xi}$.

	\paragraph{A conservation property.}
	Let us recall the notation $\xwcurl{\xvec{f}} = \partial_1 f_2 - \partial_2 f_1$. Since the domain $\mathcal{E}$ is doubly-connected and sufficiently smooth, there exists a nonzero solution~$\xvec{Q} \in \xCinfty(\overline{\mathcal{E}}; \mathbb{R}^2)$ to the div-curl system
	\[
		\xwcurl{\xvec{Q}} = 0 \mbox{ in } \mathcal{E}, \quad \xdiv{\xvec{Q}} = 0 \mbox{ in } \mathcal{E}, \quad \xvec{Q} \cdot \xvec{n} = 0 \mbox{ on } \partial \mathcal{E}.
	\]
	In fact, one can write $\xvec{Q} = \xnab^{\perp} \psi$ for a harmonic function $\psi \in \xCinfty(\overline{\mathcal{E}}; \mathbb{R})$ which is piecewise constant along $\partial \mathcal{E}$ (\cf~\cite[Pages 15-17]{MarchioroPulvirenti1994} and \cite[Chapter IX]{DautrayLions1990}).
	Furthermore, the first cohomology space
	\begin{equation}\label{equation:Z}
			\xZ(\mathcal{E}) \coloneq \left\{ \xvec{f} \in \xLtwo(\mathcal{E};\mathbb{R}^2) \, \big| \, \xdiv{\xvec{f}} = 0 \mbox{ in } \mathcal{E}, \, \xwcurl{\xvec{f}} = 0 \mbox{ in } \mathcal{E}, \, \xvec{f} \cdot \xvec{n} = 0 \mbox{ on } \partial \mathcal{E} \right\}
	\end{equation}
	is one-dimensional, thus $\xZ(\mathcal{E}) = \operatorname{span}\{\xvec{Q}\}$ (\cf~\cite[Chapter IX]{DautrayLions1990}). Let us anticipate the properties of $\xsym{\eta}$, as proclaimed in~\eqref{equation:propeta} and \eqref{equation:propeta2}, to be true. Multiplying the induction equation of~\eqref{equation:MHD00SimplyConnectedExtended} with~$\xvec{Q} = \xnab^{\perp} \psi \in \xZ(\mathcal{E})$ and integrating by parts, one discovers
	\begin{multline}\label{equation:consv}
		\xdrv{}{t} \int_{\mathcal{E}} \xvec{B}(\xvec{x},t) \cdot \xvec{Q}(\xvec{x}) \, \xdx{\xvec{x}} \\
		\begin{aligned}
			& = \int_{\mathcal{E}} \left(\xsym{\eta} - (\xvec{u} \cdot \xsym{\nabla}) \xvec{B} + (\xvec{B} \cdot \xsym{\nabla}) \xvec{u} \right)(\xvec{x},t) \cdot \xvec{Q}(\xvec{x}) \, \xdx{\xvec{x}}\\
			& = \int_{\mathcal{E}} \xnab^{\perp} \left( \xvec{u} \wedge \xvec{B} \right)(\xvec{x},t) \cdot \xnab^{\perp} \psi (\xvec{x}) \, \xdx{\xvec{x}} + \int_{\mathcal{E}} \xnab^{\perp}\phi(\xvec{x},t) \cdot \xnab^{\perp} \psi (\xvec{x}) \, \xdx{\xvec{x}}\\
			& = \int_{\mathcal{E}} \xnab \left( \xvec{u} \wedge \xvec{B} \right)(\xvec{x},t) \cdot \xnab \psi (\xvec{x}) \, \xdx{\xvec{x}} +  \int_{\mathcal{E}} \xnab\phi(\xvec{x},t) \cdot \xnab \psi (\xvec{x}) \, \xdx{\xvec{x}}\\
			& = 0.
		\end{aligned}
	\end{multline}
	In other words, the system \eqref{equation:MHD00SimplyConnectedExtended} conserves the $\xLtwo(\mathcal{E};\mathbb{R}^2)$-projection of the magnetic field to the subspace $\xZ(\mathcal{E})$.

	\subsection{Convection strategy}\label{subsection:flushingprofile}
	We select a vector field $\xvec{y}^*$ along which information propagates through the control zone $\omegaup$. Profiles of this type are known for general multiply-connected domains \cite{Coron1996EulerEq,Glass2000}. However, in order to maintain \eqref{equation:intrprctrl}, here~$\xvec{y}^*$ is required curl-free, divergence-free, and tangential throughout $\mathcal{E}$. Moreover, for the sake of managing the magnetic field's vanishing set when transported by the fluid, a maximal dragging distance is embedded in the definition of $\xvec{y}^*$.

	\begin{lmm}\label{lemma:flushing}
		There exist a profile $\xvec{y}^* \in \xCinfty(\overline{\mathcal{E}}\times[0,1];\mathbb{R}^{2})$ with $\xwcurl{\xvec{y}^*} = 0$, a pressure function $p^* \in \xCinfty(\overline{\mathcal{E}}\times[0,1];\mathbb{R})$, a number $K \in \mathbb{N}$, and $\xsym{\xi}^* \in \xCinfty(\overline{\mathcal{E}}\times[0,1];\mathbb{R}^{2})$, satisfying all of the following properties.
		\begin{itemize}
			\item $\xvec{y}^*$, $p^*$, and $\xsym{\xi}^*$ are supported in $\bigcup_{j=1}^K(a_j, b_j)$ with respect to time, where
			\[
			\quad a_j \coloneq \frac{j-1}{K}, \quad b_j = \frac{j}{K}, \quad j \in \{1,\dots,K\}.
			\] 
			\item $\xvec{y}^*$, $p^*$, and $\xsym{\xi}^*$ are time-periodic with the period $1/K$.
			\item $\xsym{\xi}^*$ is supported in $\omegaup$ with respect to the space variables.
			\item $(\xvec{y}^*, p^*, \xsym{\xi}^*)$ solve the controlled incompressible Euler problem
			\begin{equation*}\label{equation:euler_ctrl_ret}
				\begin{cases}
					\partial_t \xvec{y}^* + (\xvec{y}^* \cdot \xdop{\nabla}) \xvec{y}^* + \xdop{\nabla} p^* = \xsym{\xi}^* & \mbox{ in } \mathcal{E} \times (0,1),\\
					\xdop{\nabla}\cdot\xvec{y}^* = 0 & \mbox{ in } \mathcal{E} \times (0,1),\\
					\xvec{y}^* \cdot \xvec{n} = 0 & \mbox{ on } \partial \mathcal{E} \times (0,1),\\
					\xvec{y}^*(\cdot, 0) = \xvec{y}^*(\cdot, 1) = \xvec{0} & \mbox{ in } \mathcal{E}.
				\end{cases}
			\end{equation*}
			\item Whenever $\xvec{y}^*(\cdot,t) \neq \xsym{0}$, the integral curves of the vector field $\xvec{y}^*(\cdot,t)$ are closed, encompass the cavity of $\mathcal{E}$, and have the same orientation.
			\item The flow map $\xmcal{Y}^*$ determined via
			\begin{equation}\label{equation:flowofy}
				\begin{gathered}
					\xdrv{}{t} \xmcal{Y}^*(\xvec{x},s,t) = \xvec{y}^*(\xmcal{Y}^*(\xvec{x},s,t),t), \quad
					\xmcal{Y}^*(\xvec{x},s,s) = \xvec{x}
				\end{gathered}
			\end{equation}
			obeys the flushing property
			\begin{equation}\label{equation:flushingproperty}
				\forall \xvec{x} \in \overline{\mathcal{E}}, \, \exists t_{\xvec{x}} \in (0,1) \colon  \xmcal{Y}^{*}(\xvec{x}, 0, t_{\xvec{x}}) \in \Sigma
			\end{equation}
			and adheres to the maximal dragging distance
			\begin{equation}\label{equation:draggingproperty}
				\forall\xvec{x} \in \overline{\mathcal{E}},\, \forall j \in \{1,\dots,K\} \colon \max\limits_{s,t \in [a_j, b_j]}\operatorname{dist}(\xvec{x}, \xmcal{Y}^{*}(\xvec{x}, s, t)) < \frac{d_{\Lambda}}{2}.
			\end{equation}
		\end{itemize}
	\end{lmm}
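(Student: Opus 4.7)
The plan is to build all three objects from a single time-independent template $\xvec{y}_0 \in \xZ(\mathcal{E})$ multiplied by a scalar amplitude $h(t)$, setting $\xvec{y}^*(\xvec{x},t) := h(t)\, \xvec{y}_0(\xvec{x})$. Because $\xvec{y}_0$ will be chosen curl-free, the nonlinear term is a gradient, $(\xvec{y}^* \cdot \xnab)\xvec{y}^* = \tfrac{1}{2} h(t)^2 \xnab |\xvec{y}_0|^2$, and can be absorbed into the pressure. Only the \emph{non-exact} (first cohomology) component of $h'(t)\, \xvec{y}_0$ will need to be counter-balanced by $\xsym{\xi}^*$, which is exactly what the geometric hypothesis on $\omegaup$ allows.

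First, let $\psi \in \xCinfty(\overline{\mathcal{E}})$ be the harmonic function with $\psi|_{\Gamma^0} = 0$ and $\psi|_{\Gamma^1} = 1$, and set $\xvec{y}_0 := \xnab^\perp \psi$, which spans $\xZ(\mathcal{E})$. By conformal uniformization, $\mathcal{E}$ is equivalent to a round annulus on which the analogous harmonic function has nowhere-vanishing gradient, hence $\xnab \psi \neq 0$ on $\overline{\mathcal{E}}$; consequently the integral curves of $\xvec{y}_0$ are precisely the level sets $\{\psi = s\}$, $s \in [0,1]$, which are simple smooth closed orbits encircling the cavity with a common orientation inherited from $\xnab^\perp \psi$. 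Since the cut $\Sigma$ joins $\Gamma^0$ to $\Gamma^1$, it meets each such orbit transversally.

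Because $\mathcal{E}\setminus\Lambda$ is simply-connected and $\xvec{y}_0$ is curl-free there, a smooth single-valued primitive $\theta_0$ of $\xvec{y}_0$ exists on a neighborhood of $\overline{\mathcal{E}\setminus\Lambda}$; extending $\theta_0$ to $\theta \in \xCinfty(\overline{\mathcal{E}})$ via a Whitney-type continuation yields a decomposition $\xvec{y}_0 = \xnab \theta + \xvec{R}$ with $\xvec{R}$ smooth and $\operatorname{supp}(\xvec{R}) \subset \Lambda \subset \omegaup$. Now pick a nonnegative $h \in \xCinfty(\mathbb{R})$ that is $1/K$-periodic, supported in $\bigcup_{j=1}^K (a_j, b_j)$, and flat at each $a_j, b_j$. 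Defining
\[
p^*(\xvec{x}, t) := -h'(t)\, \theta(\xvec{x}) - \tfrac{1}{2} h(t)^2 |\xvec{y}_0(\xvec{x})|^2, \qquad \xsym{\xi}^*(\xvec{x}, t) := h'(t)\, \xvec{R}(\xvec{x}),
\]
a direct substitution collapses the controlled Euler system into the identity $h'(t)\, \xvec{y}_0 = h'(t) \xnab \theta + h'(t)\, \xvec{R}$; the required support, $1/K$-periodicity, and terminal conditions $\xvec{y}^*(\cdot, 0) = \xvec{y}^*(\cdot, 1) = \xsym{0}$ then pass directly from $h$, $h'$, and $\xvec{R}$.

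It remains to reconcile the flushing property \eqref{equation:flushingproperty} with the dragging bound \eqref{equation:draggingproperty}. The rescaling $\tau(t) := \int_0^t h(r)\, \xdx{r}$ converts $\xmcal{Y}^*$ into the flow of the autonomous field $\xvec{y}_0$ at ``time'' $\tau(t)$; letting $T_{\max} := \max_{s \in [0,1]} \oint_{\{\psi = s\}} |\xvec{y}_0|^{-1} \xdx{l}$, any choice of $h$ with $\tau(1) > T_{\max}$ guarantees that every orbit of $\xvec{y}_0$ completes a full revolution inside $[0,1]$ and therefore crosses $\Sigma$. By $1/K$-periodicity, $\int_{a_j}^{b_j} h = \tau(1)/K$, so the dragging distance within a single sub-interval is bounded by $\|\xvec{y}_0\|_\infty\, \tau(1)/K$, and choosing $K$ larger than $2 \|\xvec{y}_0\|_\infty\, \tau(1)/d_\Lambda$ secures \eqref{equation:draggingproperty}. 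I expect the only delicate point to be justifying that $\xnab \psi$ has no interior zeros, since without this the orbit structure and hence the flushing argument collapse; conformal uniformization to a round annulus resolves this, but the explicit invocation is where care is needed.
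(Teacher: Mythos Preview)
Your proposal is correct and follows essentially the same approach as the paper: both take a nonzero element $\xvec{y}_0 \in \xZ(\mathcal{E})$ (written as $\xnab^{\perp}$ of a harmonic function with constant, distinct boundary values), multiply by a $1/K$-periodic scalar amplitude, absorb the gradient part of $\partial_t\xvec{y}^*$ into the pressure using a single-valued primitive on the simply-connected set $\mathcal{E}\setminus\Lambda$, and leave the residual supported in $\Lambda$ as $\xsym{\xi}^*$. The only cosmetic differences are that the paper cites \cite{AlessandriniMagnanini1992} for the absence of critical points of the harmonic stream function whereas you invoke conformal uniformization to a round annulus, and the paper fixes the per-period integral $M$ small first and then increases $K$ for flushing, while you fix the total integral $\tau(1)$ large enough for flushing and then increase $K$ for the dragging bound---these are equivalent parametrizations.
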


	\tikzset{>={Latex[width=3.3mm,length=3.3mm]}}
	
	\begin{figure}[ht!]
		\centering
		\resizebox{1\textwidth}{!}{
			\begin{subfigure}[b]{0.5\textwidth}
				\centering
				\resizebox{1\textwidth}{!}{
					\begin{tikzpicture}
						\clip(-4.2,-4.2) rectangle (4.2,4.2);
						\draw[line width=0.1mm, color=black] (80:1) coordinate (A) arc (80:30:1) coordinate (C)  (30:3.9) coordinate (D) arc (30:80:3.9) coordinate (B);
						\draw[line width=0.1mm, color=black] (80:1) arc (80:395:1) (2:3.9) arc (2:395:3.9);
						
						\draw [line width=0.5mm, color=black] (0,0) circle (3.9);
						\draw [line width=0.5mm, color=black, fill=white] (0,0) circle (1);
						
						\draw[dashed,line width=0.5mm, color=black] (A) -- (B);
						\draw[dashed,line width=0.5mm, color=black] (C) -- (D);
						
						\draw[line width=1.2mm, color=FireBrick, ->] (30:1) arc  (30:30-120:1)  coordinate (aa);
						\draw[line width=1.2mm, color=FireBrick, ->] (30:1.72) arc  (30:{30-(120/((1.72)*(1.72)))}:1.72)  coordinate (bb);
						\draw[line width=1.2mm, color=FireBrick, ->] (30:2.45) arc (30:{30-(120/((2.45)*(2.45)))}:2.45) coordinate (cc);
						\draw[line width=1.2mm, color=FireBrick, ->] (30:3.17) arc (30:{30-(120/((3.17)*(3.17)))}:3.17) coordinate (dd);
						\draw[line width=1.2mm, color=FireBrick, ->] (30:3.9)  arc (30:{30-(120/((3.9)^2)}:3.9) coordinate (ee);
						
						\draw[line width=1.2mm, color=MidnightBlue, ->] (80:1) arc  (80:80-120:1)  coordinate (a);
						\draw[line width=1.2mm, color=MidnightBlue, ->] (80:1.72) arc  (80:{80-(120/((1.72)*(1.72)))}:1.72)  coordinate (b);
						\draw[line width=1.2mm, color=MidnightBlue, ->] (80:2.45) arc (80:{80-(120/((2.45)*(2.45)))}:2.45) coordinate (c);
						\draw[line width=1.2mm, color=MidnightBlue, ->] (80:3.17) arc (80:{80-(120/((3.17)*(3.17)))}:3.17) coordinate (d);
						\draw[line width=1.2mm, color=MidnightBlue, ->] (80:3.9)  arc (80:{80-(120/((3.9)^2)}:3.9) coordinate (e);

						\coordinate[label=left:\Huge{$\omegaup$}] (A) at (2.2,2.2);
					\end{tikzpicture}
				}
				\label{Figure:ff1}
			\end{subfigure}
			\quad \quad 
			\begin{subfigure}[b]{0.5\textwidth}
				\centering
				\resizebox{1\textwidth}{!}{
					\begin{tikzpicture}
						\clip(-4.2,-4.2) rectangle (4.2,4.2);
						\draw[line width=0.1mm, color=black] (50:1) coordinate (A) arc (50:30:1) coordinate (C)  (30:3.9) coordinate (D) arc (30:50:3.9) coordinate (B);
						\draw[line width=0.1mm, color=black] (50:1) arc (50:395:1) (2:3.9) arc (2:395:3.9);
						
						\draw [line width=0.5mm, color=black] (0,0) circle (3.9);
						\draw [line width=0.5mm, color=black, fill=white] (0,0) circle (1);

						\newcounter{coordinateindex}
						\setcounter{coordinateindex}{0}
						\tikzset{
							stepcounter/.code={
								\stepcounter{coordinateindex}
							}
						}
						\tikzset{arclen/.style={decoration={markings, mark=at position #1 	with{
										\coordinate[stepcounter] 	(coordinate\thecoordinateindex);
								}},
								postaction=decorate}}
						
						\foreach \i in {1,2,...,60}
						\coordinate[stepcounter] (coordinate\thecoordinateindex) at ({80-(120/(1+\i/20-1/20)^2)}:1+\i/20-1/20);
						
						\newcounter{coordinateindexb}
						\setcounter{coordinateindexb}{0}
						\tikzset{
							stepcounterb/.code={
								\stepcounter{coordinateindexb}
							}
						}

						\foreach \i in {1,2,...,30}
						\coordinate[stepcounterb] (coordinateb\thecoordinateindexb) at ({30-(120/(1+\i/10-1/10)^2)}:1+\i/10-1/10);

						\draw [dashed,line width=0.5mm, color=black] plot [smooth, 	tension=1] coordinates { (coordinate1) (coordinate2) (coordinate3) (coordinate4) (coordinate5) (coordinate6) (coordinate7) (coordinate8) (coordinate9) (coordinate10) (coordinate11) (coordinate12) (coordinate13)  (coordinate14) (coordinate15) (coordinate16) (coordinate17) (coordinate18) (coordinate19) (coordinate20) (coordinate21) (coordinate22) (coordinate23) (coordinate24) (coordinate25) (coordinate26) (coordinate27) (coordinate28) (coordinate29) (coordinate30) (coordinate31) (coordinate32) (coordinate33) (coordinate34) (coordinate35) (coordinate36) (coordinate37) (coordinate38) (coordinate39) (coordinate40) (coordinate41) (coordinate42) (coordinate43)  (coordinate44) (coordinate45) (coordinate46) (coordinate47) (coordinate48) (coordinate49) (coordinate50) (coordinate51) (coordinate52) (coordinate53) (coordinate54) (coordinate55) (coordinate56) (coordinate57) (coordinate58) (coordinate59) (coordinate60)};
						
						\draw [dashed,line width=0.5mm, color=black] plot [smooth, 	tension=1] coordinates { (coordinateb1) (coordinateb2) (coordinateb3) (coordinateb4) (coordinateb5) (coordinateb6) (coordinateb7) (coordinateb8) (coordinateb9) (coordinateb10) (coordinateb11) (coordinateb12) (coordinateb13)  (coordinateb14) (coordinateb15) (coordinateb16) (coordinateb17) (coordinateb18) (coordinateb19) (coordinateb20) (coordinateb21) (coordinateb22) (coordinateb23) (coordinateb24) (coordinateb25) (coordinateb26) (coordinateb27) (coordinateb28) (coordinateb29) (coordinateb30)};
					\end{tikzpicture}
				}
				\label{Figure:ff2}
		\end{subfigure}
		\quad \quad 
		\begin{subfigure}[b]{0.5\textwidth}
			\centering
			\resizebox{1\textwidth}{!}{
				\begin{tikzpicture}
					\clip(-4.2,-4.2) rectangle (4.2,4.2);
					\draw[line width=0.1mm, color=black] (50:1) coordinate (A) arc (50:30:1) coordinate (C)  (30:3.9) coordinate (D) arc (30:50:3.9) coordinate (B);
					\draw[line width=0.1mm, color=black] (50:1) arc (50:395:1) (2:3.9) arc (2:395:3.9);
					
					\draw [line width=0.5mm, color=black] (0,0) circle (3.9);
					\draw [line width=0.5mm, color=black, fill=white] (0,0) circle (1);

					\newcounter{coordinateindexc}
					\setcounter{coordinateindexc}{0}
					\tikzset{
						stepcounterc/.code={
							\stepcounter{coordinateindexc}
						}
					}
					
					\foreach \i in {1,2,...,60}
					\coordinate[stepcounterc] (coordinatec\thecoordinateindexc) at ({80-(240/(1+\i/20-1/20)^2)}:1+\i/20-1/20);
					
					\newcounter{coordinateindexd}
					\setcounter{coordinateindexd}{0}
					\tikzset{
						stepcounterd/.code={
							\stepcounter{coordinateindexd}
						}
					}
					\tikzset{arclend/.style={decoration={markings, mark=at position #1 	with{
									\coordinate[stepcounterd] 	(coordinated\thecoordinateindexd);
							}},
							postaction=decorate}}

					\foreach \i in {1,2,...,30}
					\coordinate[stepcounterd] (coordinated\thecoordinateindexd) at ({30-(240/(1+\i/10-1/10)^2)}:1+\i/10-1/10);
					
					\draw [dashed,line width=0.5mm, color=black] plot [smooth, 	tension=1] coordinates { (coordinatec1) (coordinatec2) (coordinatec3) (coordinatec4) (coordinatec5) (coordinatec6) (coordinatec7) (coordinatec8) (coordinatec9) (coordinatec10) (coordinatec11) (coordinatec12) (coordinatec13)  (coordinatec14) (coordinatec15) (coordinatec16) (coordinatec17) (coordinatec18) (coordinatec19) (coordinatec20) (coordinatec21) (coordinatec22) (coordinatec23) (coordinatec24) (coordinatec25) (coordinatec26) (coordinatec27) (coordinatec28) (coordinatec29) (coordinatec30) (coordinatec31) (coordinatec32) (coordinatec33) (coordinatec34) (coordinatec35) (coordinatec36) (coordinatec37) (coordinatec38) (coordinatec39) (coordinatec40) (coordinatec41) (coordinatec42) (coordinatec43)  (coordinatec44) (coordinatec45) (coordinatec46) (coordinatec47) (coordinatec48) (coordinatec49) (coordinatec50) (coordinatec51) (coordinatec52) (coordinatec53) (coordinatec54) (coordinatec55) (coordinatec56) (coordinatec57) (coordinatec58) (coordinatec59) (coordinatec60)};
					
					\draw [dashed,line width=0.5mm, color=black] plot [smooth, 	tension=1] coordinates { (coordinated1) (coordinated2) (coordinated3) (coordinated4) (coordinated5) (coordinated6) (coordinated7) (coordinated8) (coordinated9) (coordinated10) (coordinated11) (coordinated12) (coordinated13)  (coordinated14) (coordinated15) (coordinated16) (coordinated17) (coordinated18) (coordinated19) (coordinated20) (coordinated21) (coordinated22) (coordinated23) (coordinated24) (coordinated25) (coordinated26) (coordinated27) (coordinated28) (coordinated29) (coordinated30)};
				\end{tikzpicture}
			}
			\label{Figure:ff3}
		\end{subfigure}
		}
		\caption{For illustrative purposes, let $\mathcal{E}$ be an annulus. The (blue and red) arrows indicate how information is transported during a time interval $(a,b) \subset (0,1)$ by the flow $\xmcal{Y}^*$ associated with $\xvec{y}^*$. In the middle and right annuli, dashed lines visualize the transformation of $\omegaup$ under $\xmcal{Y}^*$ during the time intervals~$(a,b)$ and $(a, 2b-a)$, respectively. }
		\label{Figure:flushing0}
	\end{figure}

	\begin{proof}[Proof of \Cref{lemma:flushing}]
		First, the functions $(\xvec{y}^*, p^* ,\xsym{\xi}^*)$ are selected resorting to the space $\xZ(\mathcal{E})$ given in \eqref{equation:Z}. Second, by adjusting some parameters, the flushing property and the maximal dragging distance are ensured. 
		
		\paragraph{Step 1. Constructions.}
		Let $\xvec{y} \in \xZ(\mathcal{E})\setminus\{\xsym{0}\}$ be fixed. As shown in \Cref{Figure:annulusexmpl}, denote by $\Gamma^0$ and~$\Gamma^1$ the boundaries of the bounded and unbounded components of~$\mathbb{R}^2\setminus\mathcal{E}$, respectively. Then, one has $\xvec{y} = \xnab^{\perp}q$ for a harmonic function $q \in \xCinfty(\overline{\mathcal{E}};\mathbb{R})$ solving the Dirichlet problem (\cf~\cite{MarchioroPulvirenti1994,DautrayLions1990})
		\begin{equation}\label{equation:dpq}
			\Delta q = 0 \mbox{ in } \mathcal{E}, \quad q = 0 \mbox{ on } \Gamma^0, \quad q = a \mbox{ on } \Gamma^1
		\end{equation} 
		with a constant $a \in \mathbb{R}\setminus\{0\}$.
		Since $\mathcal{E}$ is doubly-connected, it is known (\cf~\cite{AlessandriniMagnanini1992}) that the harmonic function $q$ cannot have critical points, hence $\xvec{y} \neq \xvec{0}$ in $\overline{\mathcal{E}}$.  
		Now, given~$M > 0$ and $K \in \mathbb{N}$, whose values will be decided later on, we take any~$1/K$-periodic smooth profile
		\begin{equation}\label{equation:lambda}
			\begin{gathered}
				 \lambda = \lambda_{K,M} \colon [0,1] \longrightarrow [0,+\infty), \quad \operatorname{supp}(\lambda) \subset \bigcup_{j=1}^K(a_j, b_j), \\ \forall j\in\{1,\dots,K\} \colon\int_{a_j}^{b_j} \lambda(s) \, \xdx{s} = M.
			\end{gathered}
		\end{equation}
		Next, we choose $r \in \xCinfty(\overline{\mathcal{E}};\mathbb{R}^{2})$ with $\xvec{y} = \xnab r$ in the simply-connected region $\mathcal{E}\setminus \Lambda$. Eventually, 
		\[
			\xvec{y}^* \coloneq \lambda \xvec{y}, \quad p^* \coloneq - r \xdrv{\lambda}{t}  - \frac{1}{2}|\xvec{y}^*|^2, \quad \xsym{\xi}^* \coloneq \partial_t \xvec{y}^* + (\xvec{y}^* \cdot \xnab) \xvec{y}^* + \xnab p^*.
		\]
		\paragraph{Step 2. Flushing property and dragging distance.}
		As $\mathcal{E}_0\coloneq \mathcal{E}\setminus\Lambda_0$ is simply-connected and $\xvec{y}$ constitutes a gradient field in~$\mathcal{E}_0$, the integral curves of~$\xvec{y}$ restricted to~$\overline{\mathcal{E}}_0$ cannot be closed. Further, due to $\xvec{y} \neq\xsym{0}$, the associated autonomous system has no stationary points. Accordingly, all integral curves of~$\xvec{y}$ are of identical orientation and pass through the smooth cut $\Sigma$ contained in~$\Lambda$. Consequently, one can pick the numbers $M > 0$ and $K \in \mathbb{N}$ in the definition of~$\xvec{y}^*$ so that \eqref{equation:flushingproperty} and~\eqref{equation:draggingproperty} hold. Hereto, we first set $M > 0$ sufficiently small so that the solution to 
		\begin{equation*}
			\begin{gathered}
				\xdrv{}{t} \xmcal{Y}_{M}(\xvec{x},s,t) = (\widetilde{\lambda}\xvec{y})(\xmcal{Y}_{M}(\xvec{x},s,t),t), \quad
				\xmcal{Y}_{M}(\xvec{x},s,s) = \xvec{x},
			\end{gathered}
		\end{equation*}
		satisfies
		\begin{equation}\label{equation:auxmdd}
				\forall\xvec{x} \in \overline{\mathcal{E}} \colon \max\limits_{s,t \in [0, 1]}\operatorname{dist}(\xvec{x}, \xmcal{Y}_{M}(\xvec{x}, s, t)) < \frac{d_{\Lambda}}{2},
		\end{equation}
		where the smooth function $\widetilde{\lambda}$ is chosen with
		\[
			\widetilde{\lambda} \colon [0,1] \longrightarrow [0,+\infty), \quad \operatorname{supp}(\widetilde{\lambda}) \subset (0,1), \quad \int_{0}^{1} \widetilde{\lambda}(r) \, \xdx{r} = M.
		\]
		Hereby, we recall that $\xvec{y}$ is tangential at $\partial \mathcal{E}$; the flow $\xmcal{Y}_{M}$ never attempts sending information to $\mathbb{R}^2\setminus\overline{\mathcal{E}}$. The existence of a good $M > 0$ follows (after integrating the equation for $\xmcal{Y}_{M}$) from the inequality
		\begin{equation*}
			\begin{aligned}
				|\xvec{x} - \xmcal{Y}_{M}(\xvec{x}, s, t)| & \leq \int_s^t \left| \widetilde{\lambda}(r) \xvec{y}(\xmcal{Y}_M(\xvec{x},s,r),r) \right| \, \xdx{r} \\
				& \leq \|\xvec{y}\|_{\xCzero(\overline{\mathcal{E}};\mathbb{R}^2)}\int_s^t \widetilde{\lambda}(r) \, \xdx{r}.
			\end{aligned}
		\end{equation*}
		The value of $M$ is now set in stone, but one can still play with $K$ when selecting the parameter~$\lambda = \lambda_{K,M}$ of the type~\eqref{equation:lambda}. Indeed, we have
		\begin{itemize}
			\item the maximal dragging distance \eqref{equation:auxmdd},
			\item $\xvec{y}(\xvec{x}) \neq \xsym{0}$ for all $\xvec{x} \in \overline{\mathcal{E}}$,
			\item all integral curves of $\xvec{y}$ are closed and identically oriented.
		\end{itemize} 
		Thus, by the compactness of $\overline{\mathcal{E}}$ and the smoothness of the flow $\xmcal{Y}_M$, there is a number~$d^*$ such that
		\begin{equation}\label{equation:dstar}
			\forall \xvec{x} \in \mathcal{E}\colon \operatorname{dist}(\xvec{x}, \xmcal{Y}_M(\xvec{x},0,1)) \in (d^*, d_{\Lambda}/2).
		\end{equation}
		As a result, there exists a possibly large $K \in \mathbb{N}$ for which $\xvec{y}^* = \lambda _{K,M}\xvec{y}$ obeys \eqref{equation:flushingproperty}.
	\end{proof} 
	
	Henceforth, the return method profile~$\xvec{y}^*$ and the number $K \in \mathbb{N}$ are fixed via \Cref{lemma:flushing}. 
	Notably, the properties \eqref{equation:flushingproperty} and \eqref{equation:draggingproperty} are stable under small perturbations of $\xvec{y}^*$, as shown next (\cf~\cite[Lemma 7]{Fernandez-CaraSantosSouza2016} and \cite{Coron1996EulerEq,Glass2000}).
	
	\begin{lmm}\label{lemma:nu}
		Given any number $\nu_0 > 0$, there exists a small constant $\nu > 0$ for which each profile $\xvec{z} \in \xCzero(\overline{\mathcal{E}}\times[0,1];\mathbb{R}^2)$ with
		\begin{equation}\label{equation:propertiesv}
			\xvec{z} \cdot \xvec{n} = 0 \mbox{ on } \partial \mathcal{E}, \quad \|\xvec{z}-\xvec{y}^*\|_{\xCzero(\overline{\mathcal{E}}\times[0,1];\mathbb{R}^2)} < \nu
		\end{equation}
		satisfies 
		\begin{equation}\label{equation:flushingpropertyperturbation}
			\begin{gathered}
				\max_{s,t \in [0,1]}\|\xmcal{Z}(\cdot, s, t) - \xmcal{Y}^*(\cdot, s, t)\|_{\xCzero(\overline{\mathcal{E}};\mathbb{R}^2)} < \nu_0,\\
				\forall \xvec{x} \in \overline{\mathcal{E}}, \, \exists t_{\xvec{x}} \in (0,1) \colon \xmcal{Z}(\xvec{x}, 0, t_{\xvec{x}}) \in \Lambda.
			\end{gathered}
		\end{equation}
		and adheres to the maximal dragging distance
		\begin{equation}\label{equation:draggingpropertyperturbation}
			\forall\xvec{x} \in \overline{\mathcal{E}},\, \forall j \in \{1,\dots,K\} \colon \max\limits_{s,t \in [a_j, b_j]}\operatorname{dist}(\xvec{x}, \xmcal{Z}(\xvec{x}, s, t)) < \frac{d_{\Lambda}}{2},
		\end{equation}
		where the flow $\xmcal{Z}$ solves
		\begin{equation}\label{equation:flowofz}
			\xdrv{}{t} \xmcal{Z}(\xvec{x},s,t) = \xvec{z}(\xmcal{Z}(\xvec{x},s,t),t), \quad
			\xmcal{Z}(\xvec{x},s,s) = \xvec{x}.
		\end{equation}
	\end{lmm}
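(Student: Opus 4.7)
The plan is to derive a single uniform Gronwall-type closeness estimate for the two flows and then extract all three assertions from it by compactness arguments.

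First I would establish the pointwise perturbation bound. Subtracting the integral forms of \eqref{equation:flowofy} and \eqref{equation:flowofz}, writing $L\coloneq\|\xnab\xvec{y}^*\|_{\xLinfty(\mathcal{E}\times(0,1);\mathbb{R}^{2\times 2})}$ and inserting $\pm\,\xvec{y}^*(\xmcal{Z}(\xvec{x},s,r),r)$ inside the integral, one obtains
\begin{equation*}
|\xmcal{Z}(\xvec{x},s,t)-\xmcal{Y}^*(\xvec{x},s,t)|\leq \nu|t-s|+L\int_s^t|\xmcal{Z}(\xvec{x},s,r)-\xmcal{Y}^*(\xvec{x},s,r)|\,\xdx{r}.
\end{equation*}
Gronwall's lemma then produces a constant $C=C(L)>0$, independent of $\xvec{x}$ and of $s,t\in[0,1]$, such that
\begin{equation*}
\max_{s,t\in[0,1]}\|\xmcal{Z}(\cdot,s,t)-\xmcal{Y}^*(\cdot,s,t)\|_{\xCzero(\overline{\mathcal{E}};\mathbb{R}^2)}\leq C\nu.
\end{equation*}
The tangency assumption $\xvec{z}\cdot\xvec{n}=0$ on $\partial\mathcal{E}$ ensures that $\xmcal{Z}$ does not escape $\overline{\mathcal{E}}$. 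The first part of \eqref{equation:flushingpropertyperturbation} then follows by choosing $\nu<\nu_0/C$.

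Next I would handle the perturbed flushing property. The key observation is that \eqref{equation:flushingproperty} yields $\xmcal{Y}^*(\xvec{x},0,t_{\xvec{x}})\in\Sigma\subset \Lambda_0=\operatorname{int}(\Lambda)$, so there is a strictly positive safety margin to $\partial\Lambda$. Concretely, for each $\xvec{x}\in\overline{\mathcal{E}}$ pick $t_{\xvec{x}}\in(0,1)$ from \eqref{equation:flushingproperty}; by continuity of $\xmcal{Y}^*(\cdot,0,t_{\xvec{x}})$ and openness of $\Lambda_0$, there is an open neighbourhood $\xU_{\xvec{x}}\ni\xvec{x}$ and a radius $\rho_{\xvec{x}}>0$ with
\begin{equation*}
\xvec{x}'\in\xU_{\xvec{x}} \ \Longrightarrow\ \overline{B(\xmcal{Y}^*(\xvec{x}',0,t_{\xvec{x}}),\rho_{\xvec{x}})}\subset \Lambda_0.
\end{equation*}
Compactness of $\overline{\mathcal{E}}$ extracts a finite sub-cover $\xU_{\xvec{x}_1},\dots,\xU_{\xvec{x}_N}$, and setting $\rho\coloneq\min_i \rho_{\xvec{x}_i}>0$, any $\nu$ with $C\nu<\rho$ forces $\xmcal{Z}(\xvec{x},0,t_{\xvec{x}_i})\in\Lambda_0\subset\Lambda$ whenever $\xvec{x}\in\xU_{\xvec{x}_i}$, so the second line of \eqref{equation:flushingpropertyperturbation} holds with this common finite set of times.

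For the maximal dragging distance \eqref{equation:draggingpropertyperturbation}, compactness of $\overline{\mathcal{E}}\times[a_j,b_j]^2$ together with the strict inequality in \eqref{equation:draggingproperty} gives
\begin{equation*}
d^{\star}\coloneq \max_{j\in\{1,\dots,K\}}\,\max_{\xvec{x}\in\overline{\mathcal{E}}}\,\max_{s,t\in[a_j,b_j]}\operatorname{dist}(\xvec{x},\xmcal{Y}^*(\xvec{x},s,t))<\frac{d_{\Lambda}}{2},
\end{equation*}
so a triangle inequality yields $\operatorname{dist}(\xvec{x},\xmcal{Z}(\xvec{x},s,t))\leq d^{\star}+C\nu$. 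Finally, one sets $\nu>0$ once and for all to be the minimum of $\nu_0/C$, $\rho/C$, and $(d_{\Lambda}/2-d^{\star})/C$. The only subtlety—minor but worth naming—is the simultaneous compactness-based selection of a common $\nu$ valid across all three bullets; no single conclusion is hard on its own.
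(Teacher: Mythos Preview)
Your proof is correct and follows essentially the same route as the paper: derive a uniform Gronwall bound on $|\xmcal{Z}-\xmcal{Y}^*|$ from the integral form of the flow equations, then read off all three conclusions by choosing $\nu$ against the available margins. The only cosmetic difference is that for the perturbed flushing property you run a finite sub-cover argument to produce a uniform radius $\rho$, whereas the paper exploits directly that $\xmcal{Y}^*(\xvec{x},0,t_{\xvec{x}})\in\Sigma$ together with the built-in margin $d_{\Lambda}=\operatorname{dist}(\Sigma,\partial\Lambda\setminus\partial\mathcal{E})$; both amount to the same thing.
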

	\begin{proof}
		Since $\xvec{z}$ is tangential at $\partial \mathcal{E}$, the map $\xvec{x} \mapsto \xmcal{Z}(\xvec{x},s,t)$ constitutes for all instances~$s,t\in[0,1]$ a homeomorphism of $\overline{\mathcal{E}}$. Moreover, integrating \eqref{equation:flowofy} and \eqref{equation:flowofz} provides for $\xmcal{U} \coloneq \xmcal{Y}^* - \xmcal{Z}$ the representation
		\begin{equation*}
			\begin{aligned}
				\xmcal{U}(\xvec{x},s,t) & = \int_s^t \left(\xvec{y}^*(\xmcal{Y}^*(\xvec{x},s,r),r) - \xvec{y}^*(\xmcal{Z}(\xvec{x},s,r),r)\right) \, \xdx{r} \\
				& \quad  + \int_s^t \left(\xvec{y}^*(\xmcal{Z}(\xvec{x},s,r),r) - \xvec{z}(\xmcal{Z}(\xvec{x},s,r),r)\right) \, \xdx{r}.
			\end{aligned}
		\end{equation*}
		By the mean value theorem, there is a constant $C > 0$ depending on $\xvec{y}^*$ such that
		\begin{equation*}\label{equation:diffestU}
			\begin{aligned}
				\left|\xmcal{U}(\xvec{x},s,t)\right| & \leq  C\int_s^t \left|\xmcal{U}(\xvec{x},s,r),r)\right|  \, \xdx{r} + |t-s| \|\xvec{y}^* - \xvec{z}\|_{\xCzero(\overline{\mathcal{E}}\times[0,1]; \mathbb{R}^2)}
			\end{aligned}
		\end{equation*}
		Thus, Gr\"onwall's inequality implies
		\begin{equation}\label{equation:gwcest}
			\left|\xmcal{U}(\xvec{x},s,t)\right| \leq |t-s|\|\xvec{y}^* - \xvec{z}\|_{\xCzero(\overline{\mathcal{E}}\times[0,1]; \mathbb{R}^2)} \operatorname{e}^{ |t-s|\|\xvec{y}^{*}\|_{\xCone(\overline{\mathcal{E}}\times[0,1]; \mathbb{R}^2)}}.
		\end{equation}
		Finally, take $d_0 > 0$ with
		\[
			\operatorname{dist}(\xvec{x}, \xmcal{Y}^{*}(\xvec{x}, s, t)) < d_0 < d_{\Lambda}/2, \quad \xvec{x} \in \overline{\mathcal{E}}, \, s,t \in [0,1/K]
		\] 
		and fix $\nu > 0$ in \eqref{equation:propertiesv} so small that \eqref{equation:gwcest} yields
		\[
			\max_{(\xvec{x}, s,t) \in \overline{\mathcal{E}}\times[0,1]\times[0,1]}|\xmcal{U}(\xvec{x},s,t)| < \min\{\nu_0, d_{\Lambda}/2 - d_0 \}.
		\]
	\end{proof}
	
	In ideal MHD, the magnetic field is dragged by the flow. This is stated next in the context of a transport problem.  For the sake of completeness, a proof is sketched (\cf~\cite[Page 9]{Glass2000}, \cite[Page 24]{Fernandez-CaraSantosSouza2016}, and \cite[Appendix]{CoronMarbachSueur2020}).
	\begin{lmm}\label{lemma:mgftrsp}
	For $\widetilde{T}>0$, $\xvec{z} \in \xCzero([0,\widetilde{T}];\xCn{{1,\alpha}}_{*}(\overline{\mathcal{E}}; \mathbb{R}^2))$, $\xsym{g} \in \xLinfty((0,\widetilde{T}); \xCn{{0,\alpha}}(\overline{\mathcal{E}};\mathbb{R}^2))$, and $\xvec{H}_0 \in \xCn{{1,\alpha}}(\overline{\mathcal{E}}; \mathbb{R}^2)$, assume that $\xvec{H} \in \xCzero([0,\widetilde{T}];\xCn{{1,\alpha}}(\overline{\mathcal{E}}; \mathbb{R}^2))$ solves
		\begin{equation*}
			\begin{cases}
				\partial_t \xvec{H} + (\xvec{z} \cdot \xsym{\nabla}) \xvec{H} - (\xvec{H} \cdot \xsym{\nabla}) \xvec{z} = \xsym{g} & \mbox{ in } \mathcal{E} \times (0,\widetilde{T}),\\
				\xvec{H}(\cdot, 0)  =  \xvec{H}_0  & \mbox{ in } \mathcal{E}.
			\end{cases}
		\end{equation*}
		Then, at any time $t \in [0,\widetilde{T}]$, it holds
		\[
			\operatorname{supp}(\xvec{H}(\cdot,t)) \subset \xmcal{Z}(\operatorname{supp}(\xvec{H}_0), 0, t) \cup \bigcup_{s\in[0,t]}\xmcal{Z}(\operatorname{supp}(\xsym{g}(\cdot,s)), s, t),
		\]
		where $\xmcal{Z}$ denotes the flow of $\xvec{z}$ obtained via \eqref{equation:flowofz}.
	\end{lmm}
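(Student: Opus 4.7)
My plan is to pass to Lagrangian coordinates along the flow $\xmcal{Z}$, turning the $\xvec{H}$-equation into a linear matrix-valued ODE along each characteristic; the support information then falls out by a Duhamel/contrapositive argument. First, I note that the tangency $\xvec{z}\cdot\xvec{n}=0$ combined with $\xvec{z}\in\xCzero([0,\widetilde{T}];\xCn{{1,\alpha}}_{*})$ guarantees that $\xmcal{Z}(\cdot,s,t)\colon\overline{\mathcal{E}}\to\overline{\mathcal{E}}$ is, for each pair $(s,t)$, a well-defined $\xCone$-diffeomorphism with inverse $\xmcal{Z}(\cdot,t,s)$.

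Next, for each $\xvec{a}\in\overline{\mathcal{E}}$, set
\[
\widetilde{\xvec{H}}(\xvec{a},t) \coloneq \xvec{H}(\xmcal{Z}(\xvec{a},0,t),t), \quad \widetilde{\xsym{g}}(\xvec{a},t) \coloneq \xsym{g}(\xmcal{Z}(\xvec{a},0,t),t), \quad A(\xvec{a},t) \coloneq (\xnab \xvec{z})(\xmcal{Z}(\xvec{a},0,t),t).
\]
Differentiating in $t$, using the chain rule and the $\xvec{H}$-equation (with $(\xvec{H}\cdot\xnab)\xvec{z} = (\xnab\xvec{z})\xvec{H}$ read as matrix--vector product), the transport term cancels and I obtain the linear non-autonomous ODE in $\mathbb{R}^2$
\[
\xdrv{}{t}\widetilde{\xvec{H}}(\xvec{a},t) = A(\xvec{a},t)\,\widetilde{\xvec{H}}(\xvec{a},t) + \widetilde{\xsym{g}}(\xvec{a},t), \quad \widetilde{\xvec{H}}(\xvec{a},0) = \xvec{H}_0(\xvec{a}).
\]
Introducing the fundamental matrix $M(\xvec{a},t)$ solving $\partial_t M = AM$, $M(\xvec{a},0)=\operatorname{Id}$, Duhamel's formula yields
\[
\widetilde{\xvec{H}}(\xvec{a},t) = M(\xvec{a},t)\xvec{H}_0(\xvec{a}) + \int_0^t M(\xvec{a},t)M(\xvec{a},s)^{-1}\widetilde{\xsym{g}}(\xvec{a},s)\,\xdx{s}.
\]

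Finally, I fix $\xvec{y}\in\overline{\mathcal{E}}$ and $t\in[0,\widetilde{T}]$ with $\xvec{y}\notin\xmcal{Z}(\operatorname{supp}(\xvec{H}_0),0,t)\cup\bigcup_{s\in[0,t]}\xmcal{Z}(\operatorname{supp}(\xsym{g}(\cdot,s)),s,t)$ and set $\xvec{a}\coloneq\xmcal{Z}(\xvec{y},t,0)$. The first exclusion, together with invertibility of the flow, gives $\xvec{a}\notin\operatorname{supp}(\xvec{H}_0)$, hence $\xvec{H}_0(\xvec{a})=0$. For every $s\in[0,t]$, $\xmcal{Z}(\xvec{a},0,s) = \xmcal{Z}(\xvec{y},t,s)$ lies outside $\operatorname{supp}(\xsym{g}(\cdot,s))$, hence $\widetilde{\xsym{g}}(\xvec{a},s)=0$. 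The Duhamel formula then forces $\widetilde{\xvec{H}}(\xvec{a},t)=0$, i.e.\ $\xvec{H}(\xvec{y},t)=0$. Taking the complement (and closing up) delivers the claimed support inclusion.

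I expect no serious obstacle: the argument is essentially an ODE computation along characteristics, and the only points requiring care are (i) checking that the stretching term really reassembles into the matrix product $(\xnab\xvec{z})\xvec{H}$ so that the transported equation is truly a linear ODE in $\widetilde{\xvec{H}}$, and (ii) using $\xvec{z}\cdot\xvec{n}=0$ to ensure $\xmcal{Z}(\cdot,s,t)$ is a bijection of $\overline{\mathcal{E}}$, so that the set identity $\xvec{y}\notin\xmcal{Z}(S,s,t)\iff\xmcal{Z}(\xvec{y},t,s)\notin S$ may be used freely.
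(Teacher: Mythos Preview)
Your proposal is correct and follows essentially the same idea as the paper: pass to Lagrangian coordinates along the flow $\xmcal{Z}$ and exploit that the resulting equation for $\xvec{H}(\xmcal{Z}(\xvec{a},0,t),t)$ is a linear ODE in $t$ with zero data whenever the characteristic avoids $\operatorname{supp}(\xvec{H}_0)$ and $\operatorname{supp}(\xsym{g}(\cdot,s))$. The only cosmetic difference is that the paper bounds $|\xvec{H}(\xmcal{Z}(\xvec{x},0,t),t)|$ directly and invokes Gr\"onwall's inequality, whereas you write out the fundamental matrix and Duhamel formula explicitly; both routes are equivalent realizations of the same linear-ODE argument.
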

	\begin{proof}
		For simplicity, let $\xvec{g} = \xsym{0}$; the general case follows from a representation like~\eqref{equation:moc}. Now, given $\xvec{x} \notin \operatorname{supp}(\xvec{H}_0)$ and recalling that $\xvec{z}$ is tangential at $\partial \mathcal{E}$, 
		\begin{equation*}
			\begin{aligned}
				|\xvec{H}(\xmcal{Z}(\xvec{x}, 0, t), t)| & \leq |\xvec{H}_0(\xvec{x})| + \int_0^t |(\xvec{H} \cdot \xsym{\nabla}) \xvec{z}|(\xmcal{Z}(\xvec{x}, 0, s), s) \, \xdx{s} \\
				& \leq \|\xvec{z}\|_{\xCzero([0,\widetilde{T}];\xCn{{1,\alpha}}(\overline{\mathcal{E}}; \mathbb{R}^2))}\int_0^t |\xvec{H}|(\xmcal{Z}(\xvec{x}, 0, s), s) \, \xdx{s}.
			\end{aligned}
		\end{equation*}
		Gr\"onwall's inequality implies
		\[
			\xvec{H}(\xmcal{Z}(\xvec{x}, 0, t), t) = \xsym{0}, \quad t \in [0,\widetilde{T}].
		\]
	\end{proof}

	\subsection{Local exact null controllability (conclusion of \Cref{theorem:local})}\label{subsection:nonlineardecomposition}
	In this section, $\Lambda$, $d_{\Lambda}$, $\xvec{y}^*$, $K$, $a_j$, and $b_j$ are those fixed in Sections~\Rref{subsection:setup} and \Rref{subsection:flushingprofile}. The controls $(\xsym{\xi}, \xsym{\eta})$ in \eqref{equation:MHD00SimplyConnectedExtended} are now constructed through a nonlinear decomposition that facilitates an iterative annihilation of the magnetic field. More precisely, we shall describe profiles $(\widetilde{\xvec{u}}^j, \widetilde{\xvec{B}}^j, \widetilde{p}^j, \widetilde{\xsym{\xi}}^j, \widetilde{\xsym{\eta}}^j)_{j \in \{1,\dots, K\}}$ and $(\xvec{V}, P, \xsym{\Xi})$ such that the glued trajectory
	\begin{equation}\label{equation:preprf0}
		\begin{gathered}
			(\xvec{u}, \xvec{B}, p, \xsym{\xi}, \xsym{\eta})(\cdot, t) \coloneq \begin{cases}
			(\widetilde{\xvec{u}}^1, \widetilde{\xvec{B}}^1, \widetilde{p}^1, \widetilde{\xsym{\xi}}^1, \widetilde{\xsym{\eta}}^1)(\cdot,t) & \mbox{ if } t \in [0, b_1),\\
			(\widetilde{\xvec{u}}^j, \widetilde{\xvec{B}}^j, \widetilde{p}^j, \widetilde{\xsym{\xi}}^j, \widetilde{\xsym{\eta}}^j)(\cdot,t-a_j) & \mbox{ if } t \in [a_j, b_j),\\
			(\widetilde{\xvec{u}}^K, \widetilde{\xvec{B}}^K, \widetilde{p}^K, \widetilde{\xsym{\xi}}^K, \widetilde{\xsym{\eta}}^K)(\cdot,t-a_K) & \mbox{ if } t \in [a_K, 1),\\
			(\xvec{V}, \xsym{0}, P, \xsym{\Xi}, \xsym{0})(\cdot,t-1) & \mbox{ if } t \in [1, 2],
			\end{cases} 
		\end{gathered}
	\end{equation}
	with 
	\[
		a_j \coloneq \frac{j-1}{K}, \quad b_j \coloneq \frac{j}{K}, \quad j \in \{1,\dots,K\},
	\]
	constitutes a solution to \eqref{equation:MHD00SimplyConnectedExtended} driven by controls of the type \eqref{equation:propeta}. 
	The building blocks in~\eqref{equation:preprf0} are given by Theorems~\Rref{theorem:ret} and~\Rref{theorem:tkloc} below; after that, the proof of \Cref{theorem:local} is completed in \Cref{subsubsection:piecingtogether}.

	\paragraph{Choice of $\nu$.} The number $\nu > 0$, which will be used throughout this section, is determined to ensure that all $\xvec{z} \in \xCzero(\overline{\mathcal{E}}\times[0,1];\mathbb{R}^2)$ with \eqref{equation:propertiesv}
	satisfy \cref{equation:flushingpropertyperturbation,equation:draggingpropertyperturbation}. During this process, the parameter $\nu_0 > 0$ in \Cref{lemma:nu} is taken so small that
	\begin{equation}\label{equation:sn0}
		\begin{gathered}
			\operatorname{dist}(\xvec{x}_1, \xvec{x}_2) > \frac{d_{\Lambda}}{2K} \, \, \Longrightarrow \, \, \min_{j \in \{0, \dots, K\}} \operatorname{dist}(\xmcal{Y}^*(\xvec{x}_1, 0, j/K), \xmcal{Y}^*(\xvec{x}_2, 0, j/K)) > 2 \nu_0,
		\end{gathered}
	\end{equation}
	which is possible since $\xmcal{Y}^*(\cdot, j/K, 0)$ is uniformly continuous for $j \in \{1,\dots,K\}$. Thus, as $\xvec{y}^*$ is constructed only based on $\mathcal{E}$ and $\omegaup$, the small numbers~$\nu_0$ and~$\nu$ are well-defined in sole dependence on the geometry.
	
	\begin{rmrk}\label{remark:smallnu0}
		It shall be used at several occasions that \eqref{equation:sn0} implies $\nu_0 \leq d_{\Lambda}/4K$. The specific form of \eqref{equation:sn0} is employed when completing the proof of \Cref{theorem:local} in \Cref{subsubsection:piecingtogether}. 
	\end{rmrk}
	
	To specify a covering of $\mathcal{E}$, which is later utilized to localize magnetic field contributions (when proving \Cref{theorem:tkloc}), we take open $\mathcal{O}_1, \mathcal{O}_2 \subset \mathbb{R}^2$ with (see also \Cref{Figure:flushing} below)
	\begin{equation}\label{equation:coveringproperties}
		\begin{gathered}
		\operatorname{dist}(\mathcal{O}_1, \overline{\mathcal{E}}\setminus\omegaup) > 2d_{\Lambda}, \quad \operatorname{dist}(\mathcal{O}_2, \Lambda) > 2d_{\Lambda}, \quad \overline{\mathcal{E}} \subset (\mathcal{O}_1 \cup \mathcal{O}_2),
		\end{gathered}
	\end{equation}
	where $d_{\Lambda} > 0$ from \eqref{equation:dl} constitutes a lower bound for the length of any continuous curve entering $\Lambda$ and passing through the cut $\Sigma$. The choices in \eqref{equation:coveringproperties} are possible due to \eqref{equation:lmbddist}.
	Then, let~$\{\mu_1, \mu_2\} \subset \xCinfty_0(\mathbb{R}^2;\mathbb{R})$ be a partition of unity so that
	\begin{equation}\label{equation:partitionofunity}
		\forall j \in \{1,2\}\colon \operatorname{supp}(\mu_j) \subset \mathcal{O}_j, \quad \forall \xvec{x} \in \overline{\mathcal{E}}\colon \mu_1(\xvec{x}) + \mu_2(\xvec{x}) = 1.
	\end{equation}
	Given the number $K \in \mathbb{N}$ from \Cref{lemma:flushing}, while selecting $K_0 \in (0,1/2K)$ so small that $\xvec{y}^*(\cdot,t) = \xsym{0}$ for all $t \in [1/K-2K_0,1/K]$, a profile $\beta \in \xCinfty(\mathbb{R}; [0,1])$ is fixed with
	\begin{equation}\label{equation:beta}
		\beta(t) = \begin{cases}
			1 & \mbox{ if } t \in (-\infty,1/K-K_0],\\
			0 & \mbox{ if } t \in [1/K - K_0/2, +\infty).
		\end{cases}
	\end{equation}
	Finally, let $C_* = C_*(\nu_0, m) \geq 1$ be a constant such that for any two nonempty relatively open sets $\xB_1, \xB_2 \subset \overline{\mathcal{E}}$ with $\operatorname{dist}(\xB_1, \xB_2) > \nu_0/2$ there exists $\chi_{C^*} \in \xCinfty(\overline{\mathcal{E}}; \mathbb{R})$ satisfying
	\begin{gather}
				\|\xnab^{\perp}(\chi_{C^*} f)\|_{m, \alpha, \mathcal{E}} \leq C_* \|\xnab^{\perp}f\|_{m, \alpha, \mathcal{E}}, \quad \chi_{C^*}(\xvec{x}) = \begin{cases}
				1 & \mbox{ if } \xvec{x} \in \xB_1,\\
				0 & \mbox{ if } \xvec{x} \in \xB_2
			\end{cases}\label{equation:inequality0}
	\end{gather} 
	and
	\begin{gather}
		\|\xnab^{\perp}(\mu_1 f)\|_{m, \alpha, \mathcal{E}} + \|\xnab^{\perp}(\mu_2 f)\|_{m, \alpha, \mathcal{E}} \leq C_* \|\xnab^{\perp}f\|_{m, \alpha, \mathcal{E}}\label{equation:inequality1}
	\end{gather}
	for all $f \in \xCn{{m+1,\alpha}}(\overline{\mathcal{E}}; \mathbb{R})$ with $f = 0$ at $\partial \mathcal{E}$. This is possible by elliptic regularity theory since $f$ solves $\Delta f = - \xwcurl{(\xnab^{\perp} f)}$.
	\tikzset{>={Latex[width=3.5mm,length=3.5mm]}}
	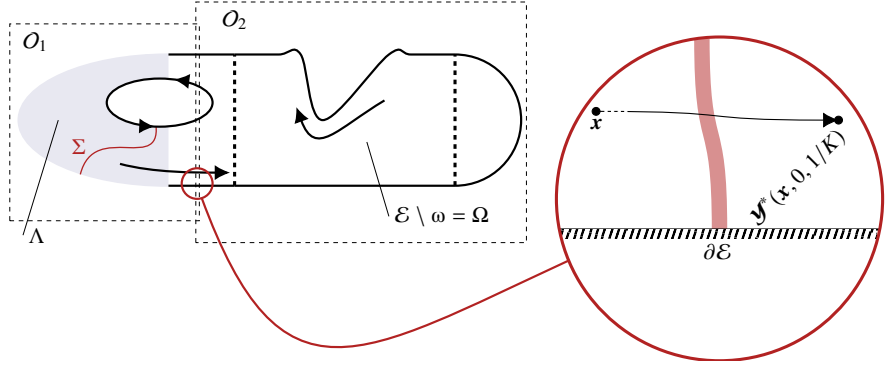
\begin{figure}[ht!]
		\centering
		\resizebox{0.90\textwidth}{!}{
			\begin{tikzpicture}
				\clip(-4.8,-3.5) rectangle (15.3,4.9);

				\draw[line width=0.5mm, color=SkyBlue!30] (-1.5,0.5)--(-1.5,1.2);
				\draw[line width=0.5mm, color=SkyBlue!30] (-1.5,3.5)--(-1.5,2.8);
				\draw[line width=0.5mm, color=MidnightBlue!10, fill=MidnightBlue!10] (-1,3.5) arc[start angle=90, end angle=270, x radius=3.4, y radius =1.5];
				\draw[line width=0.5mm, color=black] (5.5,0.5) arc[start angle=270, end angle=450, x radius=1.5, y radius =1.5];
				\draw[line width=0.5mm, color=black] plot[smooth, tension=0.6] coordinates { (0.5,3.5)  (1.5,3.5) (2,3.5) (2.5,2)  (4,3.5) (4.5,3.5) (5.5,3.5)};
				\draw[line width=0.5mm, color=black] (0.5,0.5)--(5.5,0.5);
				\draw[line width=0.3mm, color=FireBrick] (-3,0.759)  to [out=76,in=280,looseness=1.6] (-1.3,2);
				\filldraw[color=black, fill=white, line width=0.5mm,  fill opacity=1, decoration={markings, mark=at position 0.193 with {\arrow{>}}, mark=at position 0.73 with {\arrow{>}}},
				postaction={decorate}] (-1.2,2.4) ellipse (1.2cm and 0.55cm);
				\draw[line width=0.5mm, color=black] (-1,3.5)--(0.5,3.5);
				\draw[line width=0.5mm, color=black] (-1,0.5)--(0.5,0.5);

				\draw[dashed,line width=0.7mm, color=black] (0.5,0.5)--(0.5,3.5);
				\draw[dashed,line width=0.7mm, color=black] (5.5,0.5)--(5.5,3.5);

				\draw[line width=0.5mm, ->] (-2.1,1)  to [out=-15,in=180,looseness=0.8] (0.399,0.8);
				\draw[line width=0.5mm, ->] (3.9,2.45)  to [out=-149,in=-70,looseness=1.6] (1.9,2.3);
				\draw[line width=0.2mm, dashed] (-4.6,-0.3) rectangle++ (4.3,4.5); 
				\draw[line width=0.2mm, dashed] (-0.38,-0.8) rectangle++ (7.5,5.5);

				\draw[line width = 0.5mm, color=FireBrick] (-0.35,0.55) circle (0.35);
				
				\draw[line width=3.5mm, color=FireBrick!50] (1.5+10, -5.5+5)  to [out=90,in=-90,looseness=1.6] (1.1+10, -1.13+5);
				
				\fill[color=white, fill=white] (0.8+10,-1.125+5)  rectangle (2+10,-0.85+5);
				
				\draw[line width=0.5mm, color=FireBrick](-0.25,0.23)  to [out=-70,in=-155,looseness=1.6] (-1.9+10,-1.2);
				\draw[line width=0.5mm] (-2.15+10,-5.5+5) -- (5.15+10,-5.5+5);
				\fill[color=white, fill=white] (-2.1+10,-5.5+5)  rectangle (5.05+10,-5.9+5);
				\fill[line width=0pt, pattern={mylines[size=2.5pt,line width=0.9pt,angle=72]},
				pattern color=black] (-2.1+10,-5.5+5) rectangle (5.05+10,-5.7+5);
				
				\draw[line width=0.25mm, color=black, ->] (-1.3+10, -2.8+5)  to [out=0,in=-180,looseness=1.6] (4.2+10,-3+5);
				\filldraw[color=white, fill=white] (-1.1+10, -3+5) rectangle (-1.05+10,-2.6+5);
				\filldraw[color=white, fill=white] (-0.95+10, -3+5) rectangle (-0.9+10,-2.6+5);
				\filldraw[color=white, fill=white] (-0.8+10, -3+5) rectangle (-0.75+10,-2.6+5);
				\filldraw[color=white, fill=white] (-0.65+10, -3+5) rectangle (-0.6+10,-2.6+5);
				
				\draw[color=black, fill=black] (-1.3+10, -2.8+5) circle (0.09);
				\draw[color=black, fill=black] (4.2+10,-3+5) circle (0.09);
				\draw[line width = 0.7mm, color=FireBrick] (1.5+10,-4.8+5) circle (3.7);

				\coordinate[label=left:\Large{\color{FireBrick}$\Sigma$}] (AA) at (-2.73,1.39);
				\coordinate[label=left:\Large{$\mathcal{O}_1$}] (B) at (-3.6,3.8);
				\coordinate[label=left:\Large{$\mathcal{O}_2$}] (C) at (0.92,4.3);
				
				\coordinate[label=right:\Large{$\mathcal{E}\setminus\mathcal{\omegaup} = \Omega$}] (D) at (4,-0.2);
				\draw[line width=0.1mm] (3.5,1.5) -- (D);
				\coordinate[label=right:\Large{$\Lambda$}] (E) at (-4.3,-0.63);
				\draw[line width=0.1mm] (-3.5,2) -- (E);

				\coordinate[label=right:\Large{$\partial\mathcal{E}$}] (E) at (1+10, -6+5);
				\coordinate[label=right:\Large{$\xvec{x}$}] (E) at (-1.55+10, -3.1+5);
				\coordinate[label={[rotate=45]left:\Large{$\xmcal{Y}^*(\xvec{x}, 0 , 1/K)$}}] (E) at (4.38+10,-3.19+5);
			\end{tikzpicture}
		}
		\caption{The arrows schematically indicate $\xmcal{Y}^{*}$'s behavior, while $\Lambda$ and $\Sigma$ refer to the sets introduced in \Cref{subsection:setup}. To connect with \Cref{Figure:exampledomains}, the part $\Omega = \mathcal{E}\setminus\omegaup$ is denoted. Possible choices for~$\mathcal{O}_1$ and~$\mathcal{O}_2$ with \eqref{equation:coveringproperties} are illustrated by dashed rectangles. The small (red) circle attached to a large (red) circle indicates "zooming in": to motivate at this point the choices of $\mathcal{O}_1$ and $\mathcal{O}_2$, the thick (red) strip in the zoomed region indicates the support of an undesired source term/regularity corrector that will appear in the respective proof of \Cref{theorem:tkloc} given in \Cref{subsection:proofA}/\Cref{subsection:proofB}; it is shown for a sample point~$\xvec{x}$ how information flows along~$\xvec{y}^*$; the dashed part of the line joining~$\xvec{x}$ with $\xmcal{Y}^*(\xvec{x}, 0 , 1/K)$ expresses a distance possibly much larger than $\nu_0$.}
		\label{Figure:flushing}
	\end{figure}

	\subsubsection{Auxiliary problems}\label{subsubsection:aux}
	Let us first consider an incompressible ideal MHD problem in the absence of external electromagnetic forces, namely
	\begin{equation}\label{equation:MHD00unforcedlargedomain}
		\begin{cases}
			\partial_t \xvec{V} + (\xvec{V} \cdot \xsym{\nabla}) \xvec{V} - (\xvec{H} \cdot \xsym{\nabla})\xvec{H} + \xsym{\nabla} P = \xsym{\Xi} & \mbox{ in } \mathcal{E} \times (0,1),\\
			\partial_t \xvec{H} + (\xvec{V} \cdot \xsym{\nabla}) \xvec{H} - (\xvec{H} \cdot \xsym{\nabla}) \xvec{V} = \xsym{0} & \mbox{ in } \mathcal{E} \times (0,1),\\
			\xdiv{\xvec{V}} = \xdiv{\xvec{H}} = 0  & \mbox{ in } \mathcal{E} \times (0,1),\\
			\xvec{V} \cdot \xvec{n} = \xvec{H} \cdot \xvec{n} = 0  & \mbox{ on } \partial\mathcal{E} \times(0,1),\\
			\xvec{V}(\cdot, 0)  =  \xvec{V}_0,\, \xvec{H}(\cdot, 0)  =  \xvec{H}_0  & \mbox{ in } \mathcal{E},
		\end{cases}
	\end{equation}
	where $\xvec{V}_0, \xvec{H}_0 \in \xCn{{m,\alpha}}_{*}(\overline{\mathcal{E}}; \mathbb{R}^2)$ are the prescribed initial states and $\xsym{\Xi}$ plays the role of a velocity control. In the present regularity class, energy estimates show that there exists at most one solution to \eqref{equation:MHD00unforcedlargedomain} when the right-hand side is fixed. Furthermore, recalling the space $\xZ(\mathcal{E})$ from~\eqref{equation:Z}, the magnetic field~satisfies (\cf~\eqref{equation:consv})
	\begin{equation}\label{equation:consvauxe}
		\xdrv{}{t} \int_{\mathcal{E}} \xvec{H}(\xvec{x},t) \cdot \xvec{Q}(\xvec{x}) \, \xdx{\xvec{x}} = 0, \quad \xvec{Q} \in \xZ(\mathcal{E}).
	\end{equation} 
	Hence, if $\xvec{H}_0$ is orthogonal in $\xLtwo(\mathcal{E};\mathbb{R}^2)$ to ~$\xZ(\mathcal{E})$, one can choose the stream function representation~$\xvec{H} = \xnab^{\perp} \psi$ such that~$\psi = 0$ at $\partial\mathcal{E}\times[0,1]$. To see this, let~$\xvec{Q} \in \xZ(\mathcal{E})\setminus\{\xsym{0}\}$ and write $\xvec{Q} = \xnab^{\perp}q$ with a harmonic function~$q$ satisfying \eqref{equation:dpq} for some $a \neq 0$. In particular, $\xnab q \cdot \xvec{n}$ cannot vanish on the connected components of $\partial \mathcal{E}$. The claim follows now after several integrations by parts from
	\begin{equation}\label{equation:constarg}
		0 = \int_{\mathcal{E}} \xvec{H}(\xvec{x},t) \cdot \xvec{Q} \, \xdx{\xvec{x}} = (c_0(t) - c_1(t)) \int_{\Gamma^0} \xnab q \cdot \xvec{n},
	\end{equation}
	where $c_0(t), c_1(t) \in \mathbb{R}$ are the values of $\psi(\cdot, t)$ at the connected components~$\Gamma^0$ and~$\Gamma^1$ of~$\partial\mathcal{E}$.

	As stated by the next theorem (and proved in \Cref{subsection:constr}), there exists a physically localized force
	\[
		\xsym{\Xi}\colon\overline{\mathcal{E}}\times[0,1] \longrightarrow \mathbb{R}^2, \quad \operatorname{supp}(\xsym{\Xi}) \subset \Lambda\times[0,1]
	\]
	such that \eqref{equation:MHD00unforcedlargedomain} admits a solution $(\xvec{V}, \xvec{H})$ living in the vicinity of~$(\xvec{y}^*, \xsym{0})$; if the initial magnetic field is nonzero, we do not aim at this point to reach any specific target state. In order to prepare for iterated applications of the following result, it's formulation involves an increasing sequence of small initial data bounds. Moreover, the constant~$C_* \geq 1$ is that from \eqref{equation:inequality0} and \eqref{equation:inequality1}, and it is recalled that the numbers~$\nu > 0$ and $\nu_0 > 0$ have been selected at the beginning of \Cref{subsection:nonlineardecomposition} (see \eqref{equation:sn0}).
	\begin{thrm}[\cf~\Cref{subsection:constr}]\label{theorem:ret}
		Let $\xvec{V}_0, \xvec{H}_0 \in \xCn{{m,\alpha}}_{*}(\overline{\mathcal{E}}; \mathbb{R}^2)$ and assume that $\xvec{H}_0$ is orthogonal to $\xZ(\mathcal{E})$ in $\xLtwo(\mathcal{E};\mathbb{R}^2)$. There are numbers $0 < \delta_0 < \dots < \delta_K < \delta_{K+1} \coloneq \nu$
		so that for each $l \in \{0,\dots,K\}$ the estimate $\|\xvec{V}_0\|_{m,\alpha,\mathcal{E}} + \|\xvec{H}_0\|_{m,\alpha,\mathcal{E}} < \delta_l$
		implies the existence of profiles $(\xvec{V}, \xvec{H}, P, \xsym{\Xi})$ that obey \eqref{equation:MHD00unforcedlargedomain} and satisfy
		\begin{gather*}
			\xvec{V}, \xvec{H} \in \xCzero([0,1];\xCn{{m-1,\alpha}}(\overline{\mathcal{E}}; \mathbb{R}^2)) \cap \xLinfty((0,1);\xCn{{m,\alpha}}(\overline{\mathcal{E}}; \mathbb{R}^2)), \\ 
			P \in \xCzero([0,1];\xCn{{m-1,\alpha}}(\overline{\mathcal{E}};\mathbb{R})) \cap \xLinfty((0,1);\xCn{{m,\alpha}}(\overline{\mathcal{E}};\mathbb{R})),\\
			\xsym{\Xi} \in \xCzero([0,1];\xCn{{m-2,\alpha}}(\overline{\mathcal{E}};\mathbb{R}^2))\cap \xLinfty((0,1);\xCn{{m-1,\alpha}}(\overline{\mathcal{E}};\mathbb{R}^2)),\\
			\max_{t\in[0,1]} \left(\|\xvec{V}-\xvec{y}^*\|_{m,\alpha,\mathcal{E}}(t) + \|\xvec{H}\|_{m,\alpha,\mathcal{E}}(t)\right) < \delta_{l+1}/3C_*, \quad  \operatorname{supp}(\xsym{\Xi}) \subset \Lambda\times[0,1].
		\end{gather*}
		When $\xvec{H}_0 = \xsym{0}$, the functions $(\xvec{V}, P, \xsym{\Xi})$ solve the following null controllability problem for the incompressible Euler system:
		\begin{equation}\label{equation:Eulernctrl}
			\begin{cases}
				\partial_t \xvec{V} + (\xvec{V} \cdot \xsym{\nabla}) \xvec{V} + \xsym{\nabla} P = \xsym{\Xi} & \mbox{ in } \mathcal{E} \times (0,1),\\
				\xdiv{\xvec{V}} =  0  & \mbox{ in } \mathcal{E} \times (0,1),\\
				\xvec{V} \cdot \xvec{n} = 0  & \mbox{ on } \partial\mathcal{E} \times(0,1),\\
				\xvec{V}(\cdot, 0)  =  \xvec{V}_0, \xvec{V}(\cdot, 1)  =  \xvec{0},  & \mbox{ in } \mathcal{E}.
			\end{cases}
		\end{equation}
	\end{thrm}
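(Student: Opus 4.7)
My approach is a return-method fixed-point argument, perturbing from the flushing profile $\xvec{y}^*$ of \Cref{lemma:flushing}. Writing $\xvec{V} = \xvec{y}^* + \widetilde{\xvec{V}}$ and $\xsym{\Xi} = \xsym{\xi}^* + \widetilde{\xsym{\Xi}}$, the system \eqref{equation:MHD00unforcedlargedomain} transforms into
\[
\begin{cases}
\partial_t \widetilde{\xvec{V}} + ((\xvec{y}^* + \widetilde{\xvec{V}}) \cdot \xnab) \widetilde{\xvec{V}} + (\widetilde{\xvec{V}} \cdot \xnab) \xvec{y}^* - (\xvec{H} \cdot \xnab) \xvec{H} + \xnab \widetilde{P} = \widetilde{\xsym{\Xi}},\\
\partial_t \xvec{H} + ((\xvec{y}^* + \widetilde{\xvec{V}}) \cdot \xnab) \xvec{H} - (\xvec{H} \cdot \xnab)(\xvec{y}^* + \widetilde{\xvec{V}}) = \xsym{0},
\end{cases}
\]
with $(\widetilde{\xvec{V}}, \xvec{H})$ divergence-free, tangential at $\partial\mathcal{E}$, and carrying the initial data $(\xvec{V}_0, \xvec{H}_0)$. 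The everywhere-curl-free choice $\xwcurl{\xvec{y}^*} = 0$ is crucial (as emphasized after \Cref{theorem:ret}), because it controls the vorticity production from the interaction term $(\widetilde{\xvec{V}} \cdot \xnab) \xvec{y}^*$ and makes the linearized problem amenable to the forthcoming Hölder analysis.

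I would run a Picard iteration $(\widetilde{\xvec{V}}_n, \xvec{H}_n) \mapsto (\widetilde{\xvec{V}}_{n+1}, \xvec{H}_{n+1})$ by linearizing the transport terms at the previous iterate. Each Cartesian component of $\xvec{H}_{n+1}$ then solves a pure transport equation with right-hand side $(\xvec{H}_n \cdot \xnab)(\xvec{y}^* + \widetilde{\xvec{V}}_n)_i$, to which the representation \eqref{equation:moc} applies directly. The momentum equation becomes a linear incompressible Euler problem with frozen source $(\xvec{H}_n \cdot \xnab)\xvec{H}_n$ and control $\widetilde{\xsym{\Xi}}$; the apparent derivative loss in the coupling term $(\xvec{H} \cdot \xnab)\xvec{V}$ is absorbed by the time-weighted Hölder norm from \cite{RisselWang2021}, which contracts the iteration in $\xCn{{m-1,\beta}}$ for some $\beta < \alpha$ while preserving uniform $\xLinfty((0,1);\xCn{{m,\alpha}})$ bounds via weak-$\ast$ compactness and lower semicontinuity. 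Smallness of $\delta_l$ together with \Cref{lemma:nu} ensures that the flow of $\xvec{y}^* + \widetilde{\xvec{V}}_n$ stays $\xCzero$-close to $\xmcal{Y}^*$, so that the flushing property \eqref{equation:flushingpropertyperturbation} and the dragging estimate \eqref{equation:draggingpropertyperturbation} survive throughout the iteration.

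The control $\widetilde{\xsym{\Xi}}$ must be supported in $\Lambda \times [0,1]$ and, in the Euler case $\xvec{H}_0 = \xsym{0}$, additionally drive $\xvec{V}(\cdot, 1) = \xsym{0}$. For the support condition, I would exploit that $\xsym{\xi}^*$ already lies in $\omegaup$ by \Cref{lemma:flushing} and that the return profile is a gradient outside $\Lambda$; a cutoff based on \eqref{equation:inequality0} localizes the control into $\Lambda$ at the price of the constant $C_*$, which explains the factor $1/3C_*$ in the stated smallness. For null controllability of the velocity in the Euler case, I would adapt \cite[Section~4]{Coron1996EulerEq}: on a terminal subinterval where $\xvec{y}^* \equiv \xsym{0}$ (granted by the cutoff structure built into $\xvec{y}^*$), the remaining problem is a standard controlled incompressible Euler equation; the flushing property \eqref{equation:flushingproperty} has swept the vorticity out of $\mathcal{E}\setminus\Lambda$, and the surviving harmonic (first-cohomology) part of $\xvec{V}(\cdot,1)$ is killed by a harmonic corrector supported in $\Lambda$, possible because $\omegaup$ meets both boundary components $\Gamma^0$ and $\Gamma^1$ (cf.\ \Cref{remark:rr}). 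The hypothesis $\xvec{H}_0 \perp \xZ(\mathcal{E})$ combined with \eqref{equation:constarg} ensures that $\xvec{H}$ possesses a stream function vanishing on $\partial\mathcal{E}$, which is indispensable for the integration-by-parts identities underlying the symmetrized estimates.

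\textbf{The principal obstacle} is closing the $\xCn{{m,\alpha}}$ Hölder estimates uniformly on $[0,1]$ despite the nonlinear MHD coupling, while simultaneously forcing $\operatorname{supp}(\widetilde{\xsym{\Xi}}) \subset \Lambda$ rather than all of $\omegaup$. The derivative loss is overcome by the time-weighted norm machinery of \cite{RisselWang2021}; the localization is enabled by the a priori $\omegaup$-support of $\xsym{\xi}^*$, so only a cutoff-and-corrector adjustment is required. Propagating smallness through the sequence $\delta_0 < \delta_1 < \dots < \delta_K < \nu$ is then a bookkeeping exercise: one chooses each $\delta_l$ small enough that the iteration map sends the closed ball of radius $\delta_{l+1}/3C_*$ into itself, and the increasing arrangement of the $\delta_l$ is precisely what is needed for the iterative annihilation of the magnetic field in \Cref{subsubsection:piecingtogether}.
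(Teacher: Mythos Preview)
Your proposal has a genuine gap: you iterate directly on the physical variables $(\widetilde{\xvec{V}},\xvec{H})$, but this scheme cannot close at the $\xCn{{m,\alpha}}$ level. Concretely, to propagate $\xvec{H}_{n+1}$ in $\xCn{{m,\alpha}}$ via the transport representation \eqref{equation:moc} and \Cref{lemma:transptest}, you would need both the drift $\xvec{y}^*+\widetilde{\xvec{V}}_n$ and the source $(\xvec{H}_n\cdot\xnab)(\xvec{y}^*+\widetilde{\xvec{V}}_n)$ one derivative better, i.e.\ $\widetilde{\xvec{V}}_n\in\xCn{{m+1,\alpha}}$, which you do not have. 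The time-weighted norm of \cite{RisselWang2021} does not cure this: in that reference, and in the present paper, the weight $W_k$ is used only to manufacture smallness-in-time of the already closed $G^{\pm}$ source terms, not to trade spatial regularity. The paper circumvents the loss by passing to the Elsasser variables $\xvec{z}^{\pm}=\xvec{V}\pm\xvec{H}$ and working with their curls $j^{\pm}$; the transport equations \eqref{equation:IdealControl:constructionf:j+-} then have drifts $\widetilde{\xvec{z}}^{\mp}\in\xCn{{m,\alpha}}$ acting on $j^{\pm}\in\xCn{{m-1,\alpha}}$, and the key identity \eqref{equation:reprGpm} shows that $G^{\pm}$ contains only products of first derivatives of $\xvec{z}^{\pm}$, hence sits in $\xCn{{m-1,\alpha}}$ without loss. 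This symmetrization is precisely the structural point the introduction flags as the main obstacle for ideal MHD; your sketch bypasses it and therefore would not close.

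Two secondary points. First, your handling of the first cohomology part is too vague: the paper does not localize $\rho\xvec{Q}^{\#}$ by a cutoff of type \eqref{equation:inequality0} but instead \emph{prescribes} the $\xZ(\mathcal{E})$-projection $\aleph(t)$ in the div-curl system \eqref{equation:IdealControl:divcurlop} and then absorbs the resulting residue $\rho\xvec{Q}^{\#}$ into a potential $h$ on the simply-connected piece $\overline{\mathcal{E}}\setminus\Lambda$, cf.\ \eqref{equation:Rdev}. Second, for the Euler null controllability you propose to act on a terminal subinterval where $\xvec{y}^*\equiv\xsym{0}$; the paper instead builds the vorticity control $f$ in \eqref{equation:solftcw} from transported pieces $e_l$ of the partitioned initial vorticity, switching each off precisely when the flushing property carries it into $\Lambda$. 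Your terminal-interval idea is not obviously wrong, but it does not match the paper's construction and would require its own flushing argument on that subinterval.
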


	The main building blocks for the controls $\xsym{\xi}$ and $\xsym{\eta}$ in \eqref{equation:MHD00SimplyConnectedExtended} will be constructed by means of the following theorem. To this end, given $r > 0$ and~$\emptyset\neq\xS \subset \overline{\mathcal{E}}$, we write $\mathcal{N}_r(\xS)$ for the relative~$r$-neighborhood of~$\xS$ in~$\overline{\mathcal{E}}$; more precisely, $\mathcal{N}_r(\xS)$ is the restriction to $\overline{\mathcal{E}}$ of the set $\{ \xvec{x} \in \mathbb{R}^2 \, | \, \operatorname{dist}(\xvec{x}, \xS) < r \}$.
	To facilitate iterated applications of the next result, we use \Cref{theorem:ret} to fix the numbers $\delta_j$ with
	\[
		0 < \delta_0 < \dots < \delta_K < \delta_{K+1} = \nu.
	\]
	
	\begin{thrm}[{\cf~Section~\Rref{subsection:proofA} or~\Rref{subsection:proofB}}]\label{theorem:tkloc}
	Let $\widetilde{\xvec{u}}_0, \widetilde{\xvec{B}}_0 \in \xCn{{m,\alpha}}_{*}(\overline{\mathcal{E}}; \mathbb{R}^2)$ with~$\widetilde{\xvec{B}}_0$ orthogonal to $\xZ(\mathcal{E})$ in $\xLtwo(\mathcal{E};\mathbb{R}^2)$, and assume that $\|\widetilde{\xvec{u}}_0\|_{m,\alpha,\mathcal{E}} + \|\widetilde{\xvec{B}}_0\|_{m,\alpha,\mathcal{E}} < \delta_l$ for some~$l \in \{0,\dots, K\}$.
	There exist controls
	\[
		\widetilde{\xsym{\xi}}, \widetilde{\xsym{\eta}}  \in \xCzero([0,1/K];\xCn{{m-2,\alpha}}(\overline{\mathcal{E}};\mathbb{R}^2))  \cap \xLinfty((0,1/K);\xCn{{m-1,\alpha}}(\overline{\mathcal{E}};\mathbb{R}^2))
	\]
	with
		\begin{gather*}
			\operatorname{supp}(\widetilde{\xsym{\xi}}) \subset \omegaup\times[0,1/K], \quad \operatorname{supp}(\widetilde{\xsym{\eta}}) \subset \omegaup\times[1/K-2K_0, 1/K],\\
			\xdiv{\widetilde{\xsym{\eta}}} = 0 \mbox{ in } \mathcal{E}\times[0,1/K], \quad \widetilde{\xsym{\eta}}\cdot \xvec{n} = 0 \mbox{ on } \partial \mathcal{E}\times[0,1/K], \\
			\widetilde{\xsym{\eta}} = \xnab^{\perp} \widetilde{\phi} \mbox{ in } \mathcal{E}\times[0,1/K], \quad \widetilde{\phi} = 0 \mbox{ on } \partial \mathcal{E}\times[0,1/K]
		\end{gather*} 
		and a solution
		\begin{gather*}
			\widetilde{\xvec{u}}, \widetilde{\xvec{B}} \in \xCn{0}([0,1/K];\xCn{{m-1,\alpha}}(\overline{\mathcal{E}}; \mathbb{R}^2)) \cap \xLinfty((0,1/K);\xCn{{m,\alpha}}(\overline{\mathcal{E}}; \mathbb{R}^2)),\\
			\widetilde{p} \in \xCn{0}([0,1/K];\xCn{{m-1,\alpha}}(\overline{\mathcal{E}}; \mathbb{R})) \cap \xLinfty((0,1/K);\xCn{{m,\alpha}}(\overline{\mathcal{E}}; \mathbb{R}))
		\end{gather*}
		to the ideal MHD problem
		\begin{equation}\label{equation:MHD00SimplyConnectedReducedsupp}
			\begin{cases}
				\partial_t \widetilde{\xvec{u}} + (\widetilde{\xvec{u}} \cdot \xsym{\nabla}) \widetilde{\xvec{u}} - (\widetilde{\xvec{B}} \cdot \xsym{\nabla})\widetilde{\xvec{B}} + \xsym{\nabla} \widetilde{p} = \widetilde{\xsym{\xi}} & \mbox{ in } \mathcal{E} \times (0, 1/K),\\
				\partial_t \widetilde{\xvec{B}} + (\widetilde{\xvec{u}} \cdot \xsym{\nabla}) \widetilde{\xvec{B}} - (\widetilde{\xvec{B}} \cdot \xsym{\nabla}) \widetilde{\xvec{u}} = \widetilde{\xsym{\eta}} & \mbox{ in } \mathcal{E} \times (0,1/K),\\
				\xdiv{\widetilde{\xvec{u}}} = \xdiv{\widetilde{\xvec{B}}} = 0  & \mbox{ in } \mathcal{E} \times (0, 1/K),\\
				\widetilde{\xvec{u}} \cdot \xvec{n} = \widetilde{\xvec{B}} \cdot \xvec{n} = 0  & \mbox{ on } \partial\mathcal{E} \times(0, 1/K),\\
				\widetilde{\xvec{u}}(\cdot, 0)  =  \widetilde{\xvec{u}}_0,\, \widetilde{\xvec{B}}(\cdot, 0)  =  \widetilde{\xvec{B}}_0  & \mbox{ in } \mathcal{E}
			\end{cases}
		\end{equation}
		satisfying the estimates
		\begin{equation}\label{equation:scondB}
			\begin{gathered}
				\max_{t\in[0,1]} \|\widetilde{\xvec{u}}-\xvec{y}^*\|_{m,\alpha,\mathcal{E}}(t) < \delta_{l+1}/3C_*, \\
				\|\widetilde{\xvec{u}}\|_{m,\alpha,\mathcal{E}}(1/K) + \|\widetilde{\xvec{B}}\|_{m,\alpha,\mathcal{E}}(1/K) < \delta_{l+1},
			\end{gathered}
		\end{equation}
		the orthogonality relations
		\[
			\forall \xvec{Q} \in \xZ(\mathcal{E}), \, \forall  t \in [0, 1/K] \colon \langle \widetilde{\xvec{B}}(\cdot,t), \xvec{Q} \rangle_{\xLtwo(\mathcal{E};\mathbb{R}^2)},
		\]
		the annihilation property
		\begin{gather}
			\widetilde{\xvec{B}}(\mathcal{N}_{d_{\Lambda}/2}(\Lambda),1/K) = \{\xsym{0}\},\label{equation:suppcondB}
		\end{gather}
		and for all connected $\mathcal{S}\subset\overline{\mathcal{E}}$ with $\partial\mathcal{E}\cap \mathcal{S} \neq \emptyset$ the local flushing mechanism
		\begin{gather}
			\left(\widetilde{\xvec{B}}_0(\xmcal{Y}^*(\mathcal{N}_{2\nu_0}(\mathcal{S}),1/K,0)) = \{\xsym{0}\} \right) \Longrightarrow  \left(\widetilde{\xvec{B}}(\mathcal{S}, 1/K) = \{\xsym{0}\}\right).\label{equation:suppcondB3}
		\end{gather}
	\end{thrm}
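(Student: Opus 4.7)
The plan is to combine a restricted application of \Cref{theorem:ret} with a cutoff-type correction of the magnetic field localized to the dormant window $[1/K-2K_0, 1/K]$ on which $\xvec{y}^{*}\equiv\xsym{0}$ by \eqref{equation:beta}. The overall architecture follows a two-phase scheme: phase~1 propagates the data along the return-method trajectory and transports the magnetic field via the flushing flow, while phase~2 surgically deletes the field in $\mathcal{N}_{d_{\Lambda}/2}(\Lambda)$ using a divergence-free, tangential magnetic control expressed through a stream function.

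In phase~1 I invoke \Cref{theorem:ret} (read on the single period $[0, 1/K]$ through the $1/K$-periodicity of $\xvec{y}^{*}$, or after rescaling time) to obtain a trajectory $(\widetilde{\xvec{u}}_{\sharp}, \widetilde{\xvec{B}}_{\sharp}, \widetilde{p}_{\sharp}, \widetilde{\xsym{\xi}}_{\sharp})$ solving MHD without a magnetic control on $\mathcal{E}\times[0, 1/K]$ starting from $(\widetilde{\xvec{u}}_0, \widetilde{\xvec{B}}_0)$, with $\widetilde{\xsym{\xi}}_{\sharp}$ supported in $\Lambda\subset\omegaup$ and $\widetilde{\xvec{u}}_{\sharp}$ within $\delta_{l+1}/3C_{*}$ of $\xvec{y}^{*}$ in $\xCn{{m,\alpha}}$. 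Because $\widetilde{\xvec{B}}_0$ is $\xLtwo$-orthogonal to $\xZ(\mathcal{E})$, the conservation identity \eqref{equation:consv} propagates this orthogonality, so the argument around \eqref{equation:constarg} furnishes a stream function $\widetilde{\xvec{B}}_{\sharp} = \xnab^{\perp}\widetilde{\psi}_{\sharp}$ with $\widetilde{\psi}_{\sharp}\equiv 0$ on $\partial\mathcal{E}\times[0, 1/K]$. \Cref{lemma:mgftrsp} combined with \Cref{lemma:nu} and \eqref{equation:draggingpropertyperturbation} then tracks $\operatorname{supp}(\widetilde{\xvec{B}}_{\sharp}(\cdot,t))$ along the perturbed flow $\widetilde{\xmcal{U}}$ of $\widetilde{\xvec{u}}_{\sharp}$, which stays uniformly within $\nu_0$ of $\xmcal{Y}^{*}$.

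In phase~2 I prescribe $\widetilde{\xsym{\eta}} = \xnab^{\perp}\widetilde{\phi}$ with $\widetilde{\phi}$ supported in $\omegaup$ and vanishing on $\partial\mathcal{E}$; these structural conditions automatically enforce the divergence-free, tangential, and vanishing-$\xZ(\mathcal{E})$-projection constraints and, together with \eqref{equation:consv}, preserve the orthogonality $\langle \widetilde{\xvec{B}}(\cdot,t),\xvec{Q}\rangle_{\xLtwo(\mathcal{E};\mathbb{R}^2)}=0$ for all $\xvec{Q}\in\xZ(\mathcal{E})$. The terminal target is $\widetilde{\psi}(\cdot, 1/K) = \chi\,\widetilde{\psi}_{\sharp}(\cdot, 1/K)$, where $\chi$ is a cutoff of the form \eqref{equation:inequality0} with $\chi\equiv 1$ outside $\omegaup$ and $\chi\equiv 0$ on $\mathcal{N}_{d_{\Lambda}/2}(\Lambda)$; this assignment yields $\xnab^{\perp}\widetilde{\psi}\equiv\xsym{0}$ in $\mathcal{N}_{d_{\Lambda}/2}(\Lambda)$, i.e.\ the annihilation \eqref{equation:suppcondB}. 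To realize this terminal value, I cast the induction equation on $[1/K-2K_0, 1/K]$ as a transport-with-source problem for $\widetilde{\psi}$ with small drift $\widetilde{\xvec{u}}$ (of size $\mathcal{O}(\delta_{l+1})$, since $\xvec{y}^{*}$ vanishes on this window) and set up a fixed-point iteration to solve for $\widetilde{\phi}$; the cutoff estimate $C_{*}$ from \eqref{equation:inequality0} then delivers \eqref{equation:scondB}. For the local flushing property \eqref{equation:suppcondB3}, if $\widetilde{\xvec{B}}_0$ vanishes on $\xmcal{Y}^{*}(\mathcal{N}_{2\nu_0}(\mathcal{S}), 1/K, 0)$, then the $\nu_0$-proximity of $\widetilde{\xmcal{U}}(\cdot,1/K,0)$ to $\xmcal{Y}^{*}(\cdot,1/K,0)$ forces vanishing on $\widetilde{\xmcal{U}}(\mathcal{N}_{\nu_0}(\mathcal{S}), 1/K, 0)$, so \Cref{lemma:mgftrsp} gives $\widetilde{\xvec{B}}_{\sharp}\equiv\xsym{0}$ on $\mathcal{N}_{\nu_0}(\mathcal{S})$ at $t=1/K$; the phase-2 correction leaves $\mathcal{S}\setminus\omegaup$ untouched, and the portion of $\mathcal{S}$ inside $\omegaup$ is handled by the separation ensured by \eqref{equation:sn0}.

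The main obstacle will be closing the fixed-point argument of phase~2 against the Lorentz feedback $(\widetilde{\xvec{B}}\cdot\xnab)\widetilde{\xvec{B}}$ in the momentum equation, which routes every adjustment of $\widetilde{\xsym{\eta}}$ back into $\widetilde{\xvec{u}}$ and hence into the transport of $\widetilde{\xvec{B}}$ that one is trying to steer. Since the analysis is conducted in $\xCn{{m,\alpha}}$ without regularity loss, a naive linearization cannot be controlled; instead, I would employ the time-weighted H\"older-norm device of \cite{RisselWang2021}, together with the smallness $\delta_{l+1}\ll\nu$ and the curl-freeness of $\xvec{y}^{*}$ guaranteed by \Cref{lemma:flushing}, to establish a contraction on a suitable ball of candidate triples $(\widetilde{\xvec{u}}, \widetilde{\xvec{B}}, \widetilde{\phi})$. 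A parallel challenge is the geometric bookkeeping connecting $\operatorname{supp}(\widetilde{\xsym{\eta}})\subset\omegaup$ with the closeness of $\widetilde{\xmcal{U}}$ to $\xmcal{Y}^{*}$, which is exactly the role of the partition $\{\mu_1,\mu_2\}$, the covering $\{\mathcal{O}_1,\mathcal{O}_2\}$ satisfying \eqref{equation:coveringproperties}, and the inequalities \eqref{equation:inequality0}--\eqref{equation:inequality1}.
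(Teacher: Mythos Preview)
Your phase~1 use of \Cref{theorem:ret} matches the paper exactly, but phase~2 contains a genuine gap: you propose a fixed-point iteration on the coupled triple $(\widetilde{\xvec{u}},\widetilde{\xvec{B}},\widetilde{\phi})$ to hit a prescribed terminal stream function, and you correctly flag the Lorentz feedback as ``the main obstacle'' --- yet you never close it. Even granting a contraction in a time-weighted norm, nothing in your sketch guarantees that the control $\widetilde{\phi}$ produced by the iteration stays supported in $\omegaup$ at every intermediate time; you only know that the \emph{terminal} discrepancy $\widetilde{\psi}_{\sharp}-\chi\widetilde{\psi}_{\sharp}$ lives there. Solving for a localized control that steers $\widetilde{\xvec{B}}$ to a given target is itself a controllability statement of the same flavor as the one you are trying to prove.

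The paper bypasses this entirely by a device you are circling around in your last paragraph but never deploy: keep the velocity \emph{unchanged}, $\widetilde{\xvec{u}}\coloneq\xvec{V}$, and define the magnetic field \emph{explicitly}. In Version~1 one sets $\xvec{H}^{j}(\cdot,t)\coloneq\xnab^{\perp}\bigl(\mu_{j}\,\psi(\cdot,t)\bigr)$ for $j\in\{1,2\}$, where $\xvec{H}=\xnab^{\perp}\psi$ is the field from \Cref{theorem:ret}, and then takes $\widetilde{\xvec{B}}\coloneq\beta\xvec{H}^{1}+\xvec{H}^{2}$. Both controls are now \emph{read off as residuals}: $\widetilde{\xsym{\eta}}$ is whatever remains in the induction equation, and $\widetilde{\xsym{\xi}}$ absorbs $\xsym{\Xi}$ plus the Lorentz discrepancy $(\xvec{H}\cdot\xnab)\xvec{H}-(\widetilde{\xvec{B}}\cdot\xnab)\widetilde{\xvec{B}}$. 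No second fixed point is needed. The support constraints hold automatically because every residual term carries a factor of $\xvec{H}^{1}$, of $(1-\beta)$, or of the source $\widehat{\xsym{\eta}}^{j}=\partial_{t}\xvec{H}^{j}+(\xvec{V}\cdot\xnab)\xvec{H}^{j}-(\xvec{H}^{j}\cdot\xnab)\xvec{V}$; since $\widehat{\xsym{\eta}}^{1}+\widehat{\xsym{\eta}}^{2}=0$ and $\operatorname{supp}(\widehat{\xsym{\eta}}^{j})\subset\mathcal{O}_{1}\cap\mathcal{O}_{2}$, everything lands in $\mathcal{O}_{1}$, which \eqref{equation:coveringproperties} places well inside $\omegaup$. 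The annihilation \eqref{equation:suppcondB} and local flushing \eqref{equation:suppcondB3} then follow from \Cref{lemma:mgftrsp} applied to the explicit equation for $\xvec{H}^{2}$, using only \eqref{equation:coveringproperties}, the dragging bound \eqref{equation:draggingpropertyperturbation}, and the boundary condition $\psi=0$ on $\partial\mathcal{E}$ (which turns $\xnab^{\perp}\psi=\xsym{0}$ on a connected set touching $\partial\mathcal{E}$ into $\psi=0$ there). The partition $\{\mu_{1},\mu_{2}\}$ is thus used to split the \emph{solution} $\xvec{H}$ itself, not merely to bookkeep the location of a control you still have to construct.
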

	
	\begin{rmrk}\label{remark:locfl}
		We provide two independent proofs for \Cref{theorem:tkloc}, using different ways to split and manipulate trajectories of \eqref{equation:MHD00unforcedlargedomain}. The version given in \Cref{subsection:proofA} directly decomposes such a trajectory and analyzes the support of an undesired source term. In \Cref{subsection:proofB}, new trajectories are started from decomposed initial states, and the support of an undesired regularity corrector is investigated. A combination of Theorems~\Rref{theorem:ret} and~\Rref{theorem:tkloc} is used in \Cref{subsubsection:piecingtogether} to conclude \Cref{theorem:local} (\cf~\Cref{Figure:divide-and-control}).
	\end{rmrk}

	\begin{figure}[ht!]
		\centering
		\subcaptionbox*{}[1\textwidth]{
				\resizebox{0.95\textwidth}{!}{\begin{tikzpicture}[node distance=0.5cm]
					\node (A) [blue0b] {\LARGE Small data $(\xvec{u}_0, \xvec{B}_0)$};

					\node(AB) [blue, below = 0.7cm of A] {\LARGE input: $(\widetilde{\xvec{u}}_0, \widetilde{\xvec{B}}_0) = (\xvec{u}_0, \xvec{B}_0)$};
					
					\node (B) [blue0, below = 0.1cm of AB] {\LARGE \Cref{theorem:tkloc}};

					\node(BB) [blue, below = 0.1cm of B] {\LARGE output: $(\widetilde{\xvec{u}}, \widetilde{\xvec{B}})$ called $(\widetilde{\xvec{u}}^1, \widetilde{\xvec{B}}^1)$};

					\node (C) [blue0bb, below = 0.7cm of BB] {\LARGE $\widetilde{\xvec{B}}^1(\cdot, 1/K)$ vanishes due to \eqref{equation:suppcondB} in the relative $d_{\Lambda}/2$ neighborhood $\mathcal{N}_{d_{\Lambda}/2}(\Lambda)$ of $\Lambda$.};

					\node(CD) [blue, below = 0.7cm of C] {\LARGE input: $(\widetilde{\xvec{u}}_0, \widetilde{\xvec{B}}_0) =(\widetilde{\xvec{u}}^1 \widetilde{\xvec{B}}^1)(\cdot,1/K)$};
					
					\node (D) [blue0, below = 0.1cm of CD] {\LARGE \Cref{theorem:tkloc}};
					
					\node(DD) [blue, below = 0.1cm of D] {\LARGE output: $(\widetilde{\xvec{u}}, \widetilde{\xvec{B}})$ called $(\widetilde{\xvec{u}}^2, \widetilde{\xvec{B}}^2)$};

					\node (NA0) [right = 4.5cm of D] {};
					\node (NA00) [above = 9.5cm of NA0] {};

					\node (E0) [blue0d, right = 5.6cm of NA00] {\LARGE \dots};
					
					\node (NA) [blue0bb, below = 0.7cm of E0] {\LARGE $\widetilde{\xvec{B}}^{j}(\cdot, 1/K) = \xsym{0}$ in~$ \xmcal{Y}^*(\mathcal{N}_{d_{\Lambda}/2 - (j-1)d_{\Lambda}/2K},0,t)$ $\mbox{ for all }$ {$0\leq t\leq (j-1)/K$}.};

					\node (E) [blue0d, below = 0.7cm of NA] {\LARGE \dots};

					\node (G) [blue0bb, below = 0.7cm of E] {\LARGE The magnetic field $\widetilde{\xvec{B}}^K(\cdot, 1/K)$ vanishes in $\mathcal{E}$.};
					
					\node(GH) [blue, below = 0.7cm of G] {\LARGE input: $(\xvec{V}_0, \xvec{H}_0) =(\widetilde{\xvec{u}}^{K},  \xsym{0})(\cdot,1/K)$};
					\node (H) [blue0, below = 0.1cm of GH] {\LARGE \Cref{theorem:ret}};
					\node(HH) [blue, below = 0.1cm of H] {\LARGE output: $\xvec{V}$ with $\xvec{V}(\cdot, 1) = \xsym{0}$};

					\draw [arrow,line width=0.5mm] (A) -- (AB);
					\draw [arrow,line width=0.5mm] (BB) -- (C);
					\draw [arrow,line width=0.5mm] (C) -- (CD);

					\draw [arrow,line width=0.5mm] (E) -- (G);

					\draw [arrow,line width=0.5mm] (G) -- (GH);

					\draw [line width=0.5mm] (D) -- (NA0);
					\draw [line width=0.5mm] (NA0) -- (NA00);
					\draw [arrow,line width=0.5mm] (NA00) -- (E0);
					\draw [arrow,line width=0.5mm] (E0) -- (NA);
					\draw [arrow,line width=0.5mm] (NA) -- (E);
					
			\end{tikzpicture}}
		}\newline\newline
		\subcaptionbox*{}[1\textwidth]{
				\resizebox{0.95\textwidth}{!}{\begin{tikzpicture}
					\clip(-0.9,1.5) rectangle (10,5);

					\draw[line width=0.0mm, color=MidnightBlue!10, fill=MidnightBlue!10] plot[smooth, tension=1] coordinates { (0.2,6)  (2.6,6) (6.5,6) (5.9,3)  (0.2, 2.5)};

					\draw[line width=3.5mm, color=FireBrick!50, ->] (0,2.5)  to [out=0,in=-180,looseness=1.6] (11, 2.7);

					\draw[line width=0.5mm] (0.2,-0.5) -- (0.2,5.5);
					\fill[color=white, fill=white] (0.2, -0.5) rectangle (-1, 5.5);
					\fill[line width=0pt, pattern={mylines[size=2.5pt,line width=0.9pt,angle=72]},
					pattern color=black] (0, -0.5) rectangle (0.2, 5.5);

					\coordinate[label=right:\LARGE{$\widetilde{\xvec{B}}^{j-1}(\cdot,1/K)= \xsym{0}$}] (E) at (0.7, 3.8);

					\draw[line width=0.15mm, color=black, ->] (8.6,5.5)  to [out=-92,in=92,looseness=1.6] (8.3,0.7+0.8);
					\coordinate[label=right:\LARGE{$\xmcal{Y}^*$}] (E) at (8.3,3.8);
					\coordinate[label=right:\LARGE{$\partial\mathcal{E}$}] (F) at (-0.95,1+0.8);
				\end{tikzpicture}
				\begin{tikzpicture}
				\clip(-0.9,1.5) rectangle (10,5);
				
				\draw[line width=3.5mm, color=FireBrick!50, ->] (0,2.5)  to [out=0,in=-180,looseness=1.6] (11, 2.7);
				\draw[line width=0.0mm, color=MidnightBlue!10, fill=MidnightBlue!10] plot[smooth, tension=1] coordinates { (0.2,6)  (2.6,6) (5.5,6) (8.5,6) (8,4.3) (7.5, 2) (6.8,-1)  (0.2, -1)};

				\draw[line width=0.5mm] (0.2,-0.5) -- (0.2,5.5);
				\fill[color=white, fill=white] (0.2, -0.5) rectangle (-1, 5.5);
				\fill[line width=0pt, pattern={mylines[size=2.5pt,line width=0.9pt,angle=72]},
				pattern color=black] (0, -0.5) rectangle (0.2, 5.5);

				\coordinate[label=right:\LARGE{$\widetilde{\xvec{B}}^{j}(\cdot,1/K)= \xsym{0}$}] (E) at (0.78, 3.15);
				
				\draw[line width=0.15mm, color=black, ->] (8.6,5.5)  to [out=-92,in=92,looseness=1.6] (8.3,0.7+0.8);
				\coordinate[label=right:\LARGE{$\xmcal{Y}^*$}] (E) at (8.3,3.8);
				
			\end{tikzpicture}}
			}
			\caption{\Cref{theorem:local}  will be shown in \Cref{subsubsection:piecingtogether} by iterations of~\Cref{theorem:tkloc} and~\Cref{theorem:ret}. To sketch the local flushing mechanism \eqref{equation:suppcondB3}, the flow~$\xmcal{Y}^*$ is indicated. A thick (red) line indicates the support of the undesired source term/regularity corrector discussed in \Cref{subsection:proofA}/\Cref{subsection:proofB} below.}
			\label{Figure:divide-and-control}
	\end{figure}
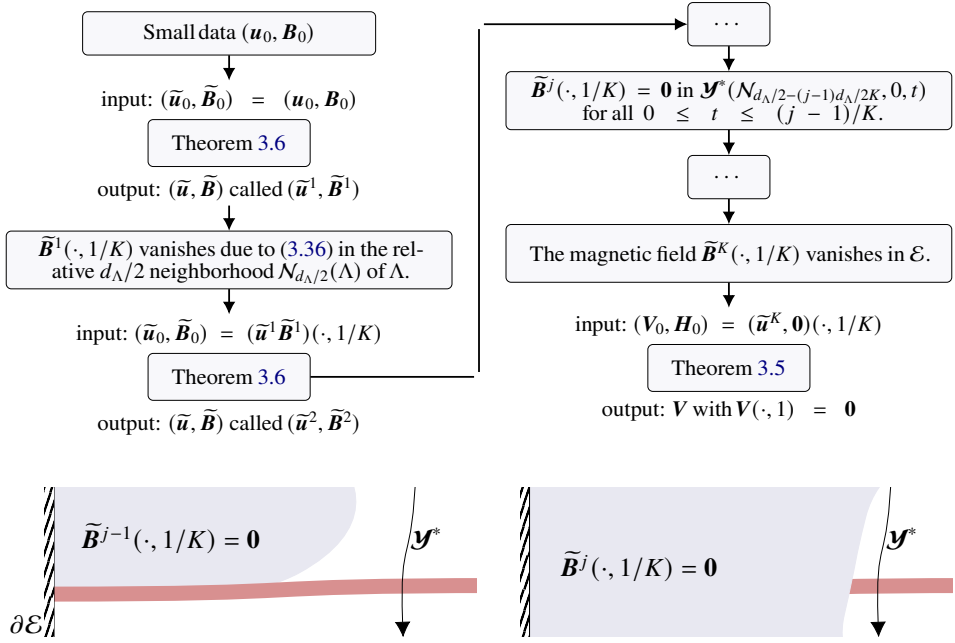

	\subsubsection{Completing the proof of \Cref{theorem:local}}\label{subsubsection:piecingtogether}
	We conclude \Cref{theorem:local} via finite iterations of \Cref{theorem:tkloc}, and a final application of \Cref{theorem:ret} with $\xvec{H}_0 = \xsym{0}$. More precisely, a controlled solution to \eqref{equation:MHD00SimplyConnectedExtended} of the form \eqref{equation:preprf0} will now be constructed by defining the profiles $(\widetilde{\xvec{u}}^j, \widetilde{\xvec{B}}^j, \widetilde{p}^j, \widetilde{\xsym{\xi}}^j, \widetilde{\xsym{\eta}}^j)_{j \in \{1,\dots, K\}}$, as well as $(\xvec{V}, P, \xsym{\Xi})$.
	
	\paragraph{Definition of $(\widetilde{\xvec{u}}^j, \widetilde{\xvec{B}}^j, \widetilde{p}^j, \widetilde{\xsym{\xi}}^j, \widetilde{\xsym{\eta}}^j)_j$.}
	Let $(\widetilde{\xvec{u}}^1, \widetilde{\xvec{B}}^1, \widetilde{p}^1,\widetilde{\xsym{\xi}}^1, \widetilde{\xsym{\eta}}^1)$ be the solution to \eqref{equation:MHD00SimplyConnectedReducedsupp} obtained via \Cref{theorem:tkloc} for $\widetilde{\xvec{u}}_0 = \xvec{u}_0$ and $\widetilde{\xvec{B}}_0 = \xvec{B}_0$ satisfying
	\begin{equation}\label{equation:delta}
		\|\widetilde{\xvec{u}}_0\|_{m,\alpha,\mathcal{E}} + \|\widetilde{\xvec{B}}_0\|_{m,\alpha,\mathcal{E}} < \delta \coloneq \delta_0.
	\end{equation}
	This process is repeated by taking $(\widetilde{\xvec{u}}^1, \widetilde{\xvec{B}}^1)(\cdot, 1/K)$ as new initial data, giving rise to a solution $(\widetilde{\xvec{u}}^2, \widetilde{\xvec{B}}^2, \widetilde{p}^2, \widetilde{\xsym{\xi}}^2, \widetilde{\xsym{\eta}}^2)$ to \eqref{equation:MHD00SimplyConnectedReducedsupp} in the sense of \Cref{theorem:tkloc}. In particular, 
	\[
		\|\widetilde{\xvec{u}}^2(\cdot, 0)\|_{m,\alpha,\mathcal{E}} + \|\widetilde{\xvec{B}}^2(\cdot, 0)\|_{m,\alpha,\mathcal{E}} < \delta_1.
	\]
	Iteratively, for $j\in\{2,\dots,K\}$,  \Cref{theorem:tkloc} provides controls $(\widetilde{\xsym{\xi}}^j, \widetilde{\xsym{\eta}}^j)$ and an associated trajectory~$(\widetilde{\xvec{u}}^j, \widetilde{\xvec{B}}^j, \widetilde{p}^j)$ of \eqref{equation:MHD00SimplyConnectedReducedsupp} emerging from the initial states
	\begin{equation}\label{equation:itdefubt}
		\begin{gathered}
			\widetilde{\xvec{u}}_0 = \widetilde{\xvec{u}}^{j-1}(\cdot, 1/K), \quad \widetilde{\xvec{B}}_0 \coloneq \widetilde{\xvec{B}}^{j-1}(\cdot, 1/K), \\
			\|\widetilde{\xvec{u}}^{j-1}(\cdot, 1/K)\|_{m,\alpha,\mathcal{E}} + \|\widetilde{\xvec{B}}^{j-1}(\cdot, 1/K)\|_{m,\alpha,\mathcal{E}} < \delta_{j-1}.
		\end{gathered}
	\end{equation}
	Recalling that~$\mathcal{N}_{r}(\Lambda)$ denotes for $r > 0$ the relative~$r$-neighborhood in $\overline{\mathcal{E}}$ of the simply-connected set~$\Lambda$ defined at the beginning of \Cref{subsection:setup}, one obtains $\widetilde{\xvec{B}}^K(\cdot, 1/K) = \xsym{0}$ as follows (see also \Cref{Figure:correctorgl}).
	\paragraph{A)} Due to \eqref{equation:suppcondB}, the magnetic field   $\widetilde{\xvec{B}}^1(\cdot, 1/K)$ vanishes in $\mathcal{N}_{d_{\Lambda}/2}(\Lambda)$.
	\paragraph{B)} Since $\widetilde{\xvec{B}}^{2}(\mathcal{N}_{d_{\Lambda}/2}(\Lambda), 0) = \{\xsym{0}\}$, it can be inferred from~\cref{equation:sn0,equation:suppcondB,equation:suppcondB3} that~$\widetilde{\xvec{B}}^{2}(\cdot, 1/K)$ vanishes throughout
	\[
		\mathcal{N}_{d_{\Lambda}/2}(\Lambda)\cup \xmcal{Y}^*(\mathcal{N}_{d_{\Lambda}/2 - d_{\Lambda}/2K}(\Lambda),0,1/K).
	\]
	Indeed, as $\operatorname{dist}(\xvec{x}, \overline{\mathcal{E}} \setminus \mathcal{N}_{d_{\Lambda}/2}(\Lambda)) > d_{\Lambda}/2K$ holds for any~$\xvec{x} \in \mathcal{N}_{d_{\Lambda}/2 - d_{\Lambda}/2K}(\Lambda)$, the smallness assumption on $\nu_0$ in \eqref{equation:sn0} implies
	\[
	   		\operatorname{dist}(\xmcal{Y}^*(\xvec{x}, 0 , 1/K), \xmcal{Y}^*(\overline{\mathcal{E}} \setminus \mathcal{N}_{d_{\Lambda}/2}(\Lambda), 0 , 1/K)) > 2\nu_0.
	\]
	Moreover, thanks to $\widetilde{\xvec{B}}^{2}(\mathcal{N}_{d_{\Lambda}/2}(\Lambda), 0) = \{\xsym{0}\}$ and $\xmcal{Y}^*(\cdot, 0, 1/K)$ being a homeomorphism of~$\overline{\mathcal{E}}$, one has
	\[
		\widetilde{\xvec{B}}^2(\xmcal{Y}^*(\mathcal{N}_{2\nu_0}(\xmcal{Y}^*(\mathcal{N}_{d_{\Lambda}/2 - d_{\Lambda}/2K}(\Lambda),0,1/K)),1/K,0), 0) = \{\xsym{0}\}.
	\] 
   	Accounting for \eqref{equation:suppcondB}, the definition of $d_{\Lambda}$ via \eqref{equation:dl}, the orientation of $\xvec{y}^*$, and the maximal dragging distance of $\xvec{y}^*$ as stated in~\Cref{equation:draggingproperty}, one has~$\widetilde{\xvec{B}}^{2}(\cdot, 1/K) = \xsym{0}$ on the union
   	\begin{equation}\label{equation:sl0}
   		\mathcal{N}_{d_{\Lambda}/2}(\Lambda) \cup \bigcup_{t \in [0, 1/K]}\xmcal{Y}^*(\mathcal{N}_{d_{\Lambda}/2 - d_{\Lambda}/2K}(\Lambda),0,t).
   	\end{equation}
	\paragraph{C)} Similarly as before, and since the profile $\xvec{y}^*$ from \Cref{lemma:flushing} is time periodic with period~$1/K$, it can be shown that $\widetilde{\xvec{B}}^{3}(\cdot, 1/K)$ vanishes on
	\begin{equation}\label{equation:sl1}
		\bigcup_{i\in\{0,1,2\}}\bigcup_{t \in [0,i/K]} \xmcal{Y}^*(\mathcal{N}_{d_{\Lambda}/2 - id_{\Lambda}/2K}(\Lambda),0,t).
	\end{equation}
	Let us provide more details on how to see this. First, by utilizing the periodicity of~$\xvec{y}^*$, and the properties of $\xmcal{Y}^*$ as being the flow of $\xvec{y}^*$, one finds
	\[
		\xmcal{Y}^*(\xmcal{Y}^*(\cdot,0,1/K),0,1/K) = \xmcal{Y}^*(\cdot,0,2/K).
	\]
	In view of \eqref{equation:sn0} and $\widetilde{\xvec{B}}^{3}(\cdot, 0) = \widetilde{\xvec{B}}^{2}(\cdot, 1/K)$, if one assumes that
	\[
		\xvec{x}_1 \in \mathcal{N}_{d_{\Lambda}/2 - d_{\Lambda}/K}(\Lambda), \quad \xvec{x}_2 \in \mathcal{N}_{d_{\Lambda}/2 - d_{\Lambda}/2K}(\Lambda), \quad \xvec{x}_3 \in \overline{\mathcal{E}}\setminus\mathcal{N}_{d_{\Lambda}/2 - d_{\Lambda}/2K}(\Lambda),
	\]
	then it holds
	\[
		\operatorname{dist}(\xmcal{Y}^*(\xvec{x}_1,0,2/K), \xmcal{Y}^*(\xvec{x}_3,0,2/K)) > 2\nu_0, \quad \widetilde{\xvec{B}}^{3}(\xmcal{Y}^*(\xvec{x}_2, 0 , 1/K), 0) = \xsym{0}.
	\]
	Therefore, since the map $\xmcal{Y}^*(\cdot, s, t)$ is for any $s, t \geq 0$ a homeomorphism of $\overline{\mathcal{E}}$, one can infer
	\begin{gather*}
		\xmcal{Y}^*(\mathcal{N}_{2\nu_0}(\xmcal{Y}^*(\mathcal{N}_{d_{\Lambda}/2 - d_{\Lambda}/K}(\Lambda), 0 , 2/K)), 1/K, 0) \subset \xmcal{Y}^*(\mathcal{N}_{d_{\Lambda}/2 - d_{\Lambda}/2K}(\Lambda), 0 , 1/K),\\
		\widetilde{\xvec{B}}^{3}\left(\xmcal{Y}^*(\mathcal{N}_{2\nu_0}(\xmcal{Y}^*(\mathcal{N}_{d_{\Lambda}/2 - d_{\Lambda}/K}(\Lambda), 0 , 2/K)), 1/K, 0), 0\right) = \{\xsym{0}\}.
	\end{gather*}
	Then, $\widetilde{\xvec{B}}^{3}(\cdot, 1/K)$ vanishes due to \eqref{equation:suppcondB3} on $\xmcal{Y}^*(\mathcal{N}_{d_{\Lambda}/2 - d_{\Lambda}/K}(\Lambda),0,2/K)$, and, by exactly repeating the analysis from the previous step, also on the union in \eqref{equation:sl0}. Given these observations, one can conclude via~\cref{equation:sn0,equation:draggingproperty,equation:suppcondB3} that $\widetilde{\xvec{B}}^{3}(\cdot, 1/K) = \xsym{0}$ holds on the entire union in~\eqref{equation:sl1}.

	\paragraph{D)} By iterating for $j \in \{4,\dots, K\}$ the arguments of the previous steps, the magnetic field  $\widetilde{\xvec{B}}^{j}(\cdot, 1/K)$ is seen to vanish for each~$j \in \{1,\dots, K\}$ throughout the union
	\[
		\bigcup_{t \in [0,(j-1)/K]} \xmcal{Y}^*(\mathcal{N}_{(d_{\Lambda}/2) - (j-1)d_{\Lambda}/2K}(\Lambda),0,t).
	\]
	Thus, acknowledging the choice of $d_{\Lambda}$ together with the flushing property \eqref{equation:flushingproperty} and maximal dragging distance \Cref{equation:draggingproperty}, it can be concluded that
	\[
			\widetilde{\xvec{B}}^K(\cdot, 1/K) = \xsym{0} \mbox{ in } \overline{\mathcal{E}}, \quad \|\widetilde{\xvec{u}}^K(\cdot, 1/K)\|_{m,\alpha,\mathcal{E}}< \delta_{K},
	\]
	where the estimate for $\widetilde{\xvec{u}}^K(\cdot, 1/K)$ is due to \eqref{equation:itdefubt}.

	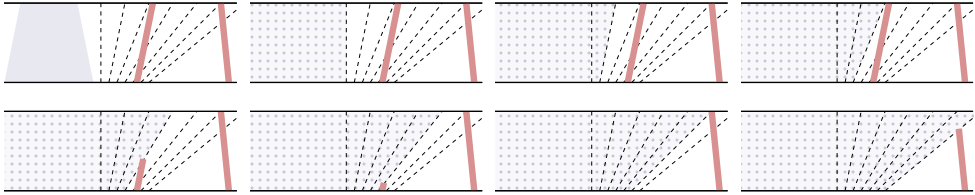
\begin{figure}[ht!]
		\centering
		\resizebox{0.99\textwidth}{!}{
			\begin{subfigure}[b]{0.5\textwidth}
				\centering
				\resizebox{1\textwidth}{!}{
					\begin{tikzpicture}
						\clip(0.2,-0.1) rectangle (6,2.1);

						\draw[line width=0.0mm, color=MidnightBlue!10, fill=MidnightBlue!10] plot[smooth, tension=0] coordinates { (0.2,0)  (2.4,0) (2,2) (0.6,2)  (0.2,0)};
						\draw[line width=0.3mm]  (0,0) -- (6,0);
						\draw[line width=0.3mm]  (0,2) -- (6,2);

						\draw[line width=0.2mm, dashed]  (2.6,0) -- (2.6,2);
						
						\draw[line width=0.2mm, dashed]  (2.8,0) -- (3.2,2);
						
						\draw[line width=0.2mm, dashed]  (3,0) -- (3.8,2);
						
						\draw[line width=0.2mm, dashed]  (3.2,0) -- (3.2+1.2,2);

						\draw[line width=0.2mm, dashed]  (3.4,0) -- (3.4+1.6,2);
						
						\draw[line width=0.2mm, dashed]  (3.6,0) -- (5.6,2);
						
						\draw[line width=0.2mm, dashed]  (3.8,0) -- (6.2,2);

						\draw[line width=1.6mm, color=FireBrick!50]  (3.5,0) -- (3.9,2);
						\draw[line width=1.6mm, color=FireBrick!50]  (5.8,0) -- (5.6,2);
						
						\coordinate[label=left:\scriptsize$\Sigma$] (A) at (-2.9,1.2);
						\draw[line width=0.2mm, ->] (-3,1.2)  to [out=-15,in=-135,looseness=0.8] (-2.36,1.58);
						
						\draw[line width=0.3mm]  (0,0) -- (6,0);
						\draw[line width=0.3mm]  (0,2) -- (6,2);
					\end{tikzpicture}
				}
				\label{Figure:gg1}
			\end{subfigure}
			
			\begin{subfigure}[b]{0.5\textwidth}
				\centering
				\resizebox{1\textwidth}{!}{
					\begin{tikzpicture}
						\clip(0.2,-0.1) rectangle (6,2.1);

						\draw[line width=0.0mm, color=MidnightBlue!3, fill=MidnightBlue!3, postaction={pattern = dots, pattern color=MidnightBlue!25}] plot[smooth, tension=0] coordinates { (0.2,0)  (2.6,0) (2.6,2) (0.2,2)  (0.2,0)};
						\draw[line width=0.3mm]  (0,0) -- (6,0);
						\draw[line width=0.3mm]  (0,2) -- (6,2);

						\draw[line width=0.2mm, dashed]  (2.6,0) -- (2.6,2);
						
						\draw[line width=0.2mm, dashed]  (2.8,0) -- (3.2,2);
						
						\draw[line width=0.2mm, dashed]  (3,0) -- (3.8,2);
						
						\draw[line width=0.2mm, dashed]  (3.2,0) -- (3.2+1.2,2);

						\draw[line width=0.2mm, dashed]  (3.4,0) -- (3.4+1.6,2);
						
						\draw[line width=0.2mm, dashed]  (3.6,0) -- (5.6,2);
						
						\draw[line width=0.2mm, dashed]  (3.8,0) -- (6.2,2);

						\draw[line width=1.6mm, color=FireBrick!50]  (3.5,0) -- (3.9,2);
						\draw[line width=1.6mm, color=FireBrick!50]  (5.8,0) -- (5.6,2);
						
						\coordinate[label=left:\scriptsize$\Sigma$] (A) at (-2.9,1.2);
						\draw[line width=0.2mm, ->] (-3,1.2)  to [out=-15,in=-135,looseness=0.8] (-2.36,1.58);
						
						\draw[line width=0.3mm]  (0,0) -- (6,0);
						\draw[line width=0.3mm]  (0,2) -- (6,2);
					\end{tikzpicture}
				}
				\label{Figure:gg2}
			\end{subfigure}
			
			\begin{subfigure}[b]{0.5\textwidth}
				\centering
				\resizebox{1\textwidth}{!}{
					\begin{tikzpicture}
						\clip(0.2,-0.1) rectangle (6,2.1);

						\draw[line width=0.0mm, color=MidnightBlue!3, fill=MidnightBlue!3, postaction={pattern = dots, pattern color=MidnightBlue!25}] plot[smooth, tension=0] coordinates { (0.2,0)  (2.8,0) (3.2,2) (0.2,2)  (0.2,0)};
						\draw[line width=0.3mm]  (0,0) -- (6,0);
						\draw[line width=0.3mm]  (0,2) -- (6,2);
						
						\draw[line width=0.2mm, dashed]  (2.6,0) -- (2.6,2);
						
						\draw[line width=0.2mm, dashed]  (2.8,0) -- (3.2,2);
						
						\draw[line width=0.2mm, dashed]  (3,0) -- (3.8,2);
						
						\draw[line width=0.2mm, dashed]  (3.2,0) -- (3.2+1.2,2);

						\draw[line width=0.2mm, dashed]  (3.4,0) -- (3.4+1.6,2);
						
						\draw[line width=0.2mm, dashed]  (3.6,0) -- (5.6,2);

						\draw[line width=0.2mm, dashed]  (3.8,0) -- (6.2,2);

						\draw[line width=1.6mm, color=FireBrick!50]  (3.5,0) -- (3.9,2);
						\draw[line width=1.6mm, color=FireBrick!50]  (5.8,0) -- (5.6,2);
						
						\coordinate[label=left:\scriptsize$\Sigma$] (A) at (-2.9,1.2);
						\draw[line width=0.2mm, ->] (-3,1.2)  to [out=-15,in=-135,looseness=0.8] (-2.36,1.58);
						
						\draw[line width=0.3mm]  (0,0) -- (6,0);
						\draw[line width=0.3mm]  (0,2) -- (6,2);
					\end{tikzpicture}
				}
				\label{Figure:jj1}
			\end{subfigure}
			
			\begin{subfigure}[b]{0.5\textwidth}
				\centering
				\resizebox{1\textwidth}{!}{
					\begin{tikzpicture}
						\clip(0.2,-0.1) rectangle (6,2.1);

						\draw[line width=0.0mm, color=MidnightBlue!3, fill=MidnightBlue!3, postaction={pattern = dots, pattern color=MidnightBlue!25}] plot[smooth, tension=0] coordinates { (0.2,0)  (3,0) (3.8,2) (0.2,2)  (0.2,0)};
						\draw[line width=0.3mm]  (0,0) -- (6,0);
						\draw[line width=0.3mm]  (0,2) -- (6,2);
						
						\draw[line width=0.2mm, dashed]  (2.6,0) -- (2.6,2);
						
						\draw[line width=0.2mm, dashed]  (2.8,0) -- (3.2,2);
						
						\draw[line width=0.2mm, dashed]  (3,0) -- (3.8,2);
						
						\draw[line width=0.2mm, dashed]  (3.2,0) -- (3.2+1.2,2);

						\draw[line width=0.2mm, dashed]  (3.4,0) -- (3.4+1.6,2);
						
						\draw[line width=0.2mm, dashed]  (3.6,0) -- (5.6,2);
						
						\draw[line width=0.2mm, dashed]  (3.8,0) -- (6.2,2);

						\draw[line width=1.6mm, color=FireBrick!50]  (3.5,0) -- (3.9,2);
						\draw[line width=1.6mm, color=FireBrick!50]  (5.8,0) -- (5.6,2);
						
						\coordinate[label=left:\scriptsize$\Sigma$] (A) at (-2.9,1.2);
						\draw[line width=0.2mm, ->] (-3,1.2)  to [out=-15,in=-135,looseness=0.8] (-2.36,1.58);
						
						\draw[line width=0.3mm]  (0,0) -- (6,0);
						\draw[line width=0.3mm]  (0,2) -- (6,2);
					\end{tikzpicture}
				}
				\label{Figure:jj2}
			\end{subfigure}
		}
		\resizebox{0.99\textwidth}{!}{

			\begin{subfigure}[b]{0.5\textwidth}
				\centering
				\resizebox{1\textwidth}{!}{
					\begin{tikzpicture}
						\clip(0.2,-0.1) rectangle (6,2.1);

						\draw[line width=0.0mm, color=MidnightBlue!3, fill=MidnightBlue!3, postaction={pattern = dots, pattern color=MidnightBlue!25}] plot[smooth, tension=0] coordinates { (0.2,0)  (3.2,0) (3.2+1.2,2) (3.9,2)  (0.2,2)  (0.2,0)};
						\draw[line width=0.3mm]  (0,0) -- (6,0);
						\draw[line width=0.3mm]  (0,2) -- (6,2);
						
						\draw[line width=0.2mm, dashed]  (2.6,0) -- (2.6,2);
						
						\draw[line width=0.2mm, dashed]  (2.8,0) -- (3.2,2);
						
						\draw[line width=0.2mm, dashed]  (3,0) -- (3.8,2);
						
						\draw[line width=0.2mm, dashed]  (3.2,0) -- (3.2+1.2,2);

						\draw[line width=0.2mm, dashed]  (3.4,0) -- (3.4+1.6,2);
						
						\draw[line width=0.2mm, dashed]  (3.6,0) -- (5.6,2);
						
						\draw[line width=0.2mm, dashed]  (3.8,0) -- (6.2,2);

						\draw[line width=1.6mm, color=FireBrick!50]  (3.5,0) -- (3.5+0.4/2.5,2/2.5);
						\draw[line width=1.6mm, color=FireBrick!50]  (5.8,0) -- (5.6,2);
						
						\coordinate[label=left:\scriptsize$\Sigma$] (A) at (-2.9,1.2);
						\draw[line width=0.2mm, ->] (-3,1.2)  to [out=-15,in=-135,looseness=0.8] (-2.36,1.58);
						
						\draw[line width=0.3mm]  (0,0) -- (6,0);
						\draw[line width=0.3mm]  (0,2) -- (6,2);
					\end{tikzpicture}
				}
				\label{Figure:kk1}
			\end{subfigure}
			
			\begin{subfigure}[b]{0.5\textwidth}
				\centering
				\resizebox{1\textwidth}{!}{
					\begin{tikzpicture}
						\clip(0.2,-0.1) rectangle (6,2.1);

						\draw[line width=0.0mm, color=MidnightBlue!3, fill=MidnightBlue!3, postaction={pattern = dots, pattern color=MidnightBlue!25}] plot[smooth, tension=0] coordinates { (0.2,0)  (3.4,0) (5,2) (0.2,2)  (0.2,0)};
						\draw[line width=0.3mm]  (0,0) -- (6,0);
						\draw[line width=0.3mm]  (0,2) -- (6,2);
						
						\draw[line width=0.2mm, dashed]  (2.6,0) -- (2.6,2);
						
						\draw[line width=0.2mm, dashed]  (2.8,0) -- (3.2,2);
						
						\draw[line width=0.2mm, dashed]  (3,0) -- (3.8,2);
						
						\draw[line width=0.2mm, dashed]  (3.2,0) -- (3.2+1.2,2);

						\draw[line width=0.2mm, dashed]  (3.4,0) -- (3.4+1.6,2);
						
						\draw[line width=0.2mm, dashed]  (3.6,0) -- (5.6,2);
						
						\draw[line width=0.2mm, dashed]  (3.8,0) -- (6.2,2);

						\draw[line width=1.6mm, color=FireBrick!50]  (3.5,0) -- (3.5+0.4/10,2/10);
						\draw[line width=1.6mm, color=FireBrick!50]  (5.8,0) -- (5.6,2);

						\coordinate[label=left:\scriptsize$\Sigma$] (A) at (-2.9,1.2);
						\draw[line width=0.2mm, ->] (-3,1.2)  to [out=-15,in=-135,looseness=0.8] (-2.36,1.58);
						
						\draw[line width=0.3mm]  (0,0) -- (6,0);
						\draw[line width=0.3mm]  (0,2) -- (6,2);
					\end{tikzpicture}
				}
				\label{Figure:kk2}
			\end{subfigure}
			
			\begin{subfigure}[b]{0.5\textwidth}
				\centering
				\resizebox{1\textwidth}{!}{
					\begin{tikzpicture}
						\clip(0.2,-0.1) rectangle (6,2.1);

						\draw[line width=0.0mm, color=MidnightBlue!3, fill=MidnightBlue!3, postaction={pattern = dots, pattern color=MidnightBlue!25}] plot[smooth, tension=0] coordinates { (0.2,0)  (3.6,0)  (5.6,2)  (0.2,2)  (0.2,0)};
						\draw[line width=0.3mm]  (0,0) -- (6,0);
						\draw[line width=0.3mm]  (0,2) -- (6,2);
						
						\draw[line width=0.2mm, dashed]  (2.6,0) -- (2.6,2);
						
						\draw[line width=0.2mm, dashed]  (2.8,0) -- (3.2,2);
						
						\draw[line width=0.2mm, dashed]  (3,0) -- (3.8,2);
						
						\draw[line width=0.2mm, dashed]  (3.2,0) -- (3.2+1.2,2);

						\draw[line width=0.2mm, dashed]  (3.4,0) -- (3.4+1.6,2);
						
						\draw[line width=0.2mm, dashed]  (3.6,0) -- (5.6,2);

						\draw[line width=0.2mm, dashed]  (3.8,0) -- (6.2,2);

						\draw[line width=1.6mm, color=FireBrick!50]  (5.8,0) -- (5.6,2);
						
						\coordinate[label=left:\scriptsize$\Sigma$] (A) at (-2.9,1.2);
						\draw[line width=0.2mm, ->] (-3,1.2)  to [out=-15,in=-135,looseness=0.8] (-2.36,1.58);
						
						\draw[line width=0.3mm]  (0,0) -- (6,0);
						\draw[line width=0.3mm]  (0,2) -- (6,2);
					\end{tikzpicture}
				}
				\label{Figure:ll1}
			\end{subfigure}
			
			\begin{subfigure}[b]{0.5\textwidth}
				\centering
				\resizebox{1\textwidth}{!}{
					\begin{tikzpicture}
						\clip(0.2,-0.1) rectangle (6,2.1);

						\draw[line width=0.0mm, color=MidnightBlue!3, fill=MidnightBlue!3, postaction={pattern = dots, pattern color=MidnightBlue!25}] plot[smooth, tension=0] coordinates { (0.2,0)  (3.8,0)  (6.2,2) (0.2,2)  (0.2,0)};
						\draw[line width=0.3mm]  (0,0) -- (6,0);
						\draw[line width=0.3mm]  (0,2) -- (6,2);

						\draw[line width=0.2mm, dashed]  (2.6,0) -- (2.6,2);
						
						\draw[line width=0.2mm, dashed]  (2.8,0) -- (3.2,2);
						
						\draw[line width=0.2mm, dashed]  (3,0) -- (3.8,2);
						
						\draw[line width=0.2mm, dashed]  (3.2,0) -- (3.2+1.2,2);

						\draw[line width=0.2mm, dashed]  (3.4,0) -- (3.4+1.6,2);
						
						\draw[line width=0.2mm, dashed]  (3.6,0) -- (5.6,2);
						
						\draw[line width=0.2mm, dashed]  (3.8,0) -- (6.2,2);

						\draw[line width=1.6mm, color=FireBrick!50]  (5.8,0) -- (5.8-0.2/1.28,2/1.28);

						\coordinate[label=left:\scriptsize$\Sigma$] (A) at (-2.9,1.2);
						\draw[line width=0.2mm, ->] (-3,1.2)  to [out=-15,in=-135,looseness=0.8] (-2.36,1.58);
						
						\draw[line width=0.3mm]  (0,0) -- (6,0);
						\draw[line width=0.3mm]  (0,2) -- (6,2);
					\end{tikzpicture}
				}
				\label{Figure:ll2}
			\end{subfigure}
		}
		\caption{The top left picture displays the interior of~$\Lambda$~as a (blue) shaded region. Dotted filled areas in the other sub-pictures mark sets on which exemplary $\widetilde{\xvec{B}}^1(\cdot, 1/K), \dots, \widetilde{\xvec{B}}^6(\cdot, 1/K)$, and~$\widetilde{\xvec{B}}^7(\cdot, 1/K)$ are respectively known to vanish. The (red) strips contain the support of the undesired source term/regularity corrector from \Cref{theorem:tkloc}'s proof in \Cref{subsection:proofA}/\Cref{subsection:proofA}.}
		\label{Figure:correctorgl}
	\end{figure}
	
	\paragraph{Definition of $(\xvec{V}, P, \xsym{\Xi})$.}
	In order to drive also the velocity field to rest, it remains to apply \Cref{theorem:ret} with the initial states
	\[
		\xvec{V}_0 = \xvec{u}^{K}(\cdot, 1/K), \quad  \xvec{H}_0 = \xsym{0}.
	\]
	This provides a controlled trajectory $(\xvec{V}, P, \xsym{\Xi})$ to the system \eqref{equation:Eulernctrl} which meets the target constraint $\xvec{V}(\cdot, 1) = \xsym{0}$.
	
	\paragraph{Conclusion.} 
	Since the initial values (at $t = 0$) of the auxiliary profiles~$\widetilde{\xvec{u}}^j$, $\widetilde{\xvec{B}}^j$, and~$\widetilde{p}^j$ are for each $j \in \{2, \dots, K\}$ chosen as the final values (at $t = 1/K$) of the respective functions $\widetilde{\xvec{u}}^{j-1}$, $\widetilde{\xvec{B}}^{j-1}$, and $\widetilde{p}^{j-1}$, the profiles $\xvec{u}$, $\xvec{B}$, and~$p$ constructed by means of \eqref{equation:preprf0} are continuous with respect to time. The controls are glued in a way which potentially leads to a finite number of discontinuities. Summarizing the properties that $(\xvec{u}, \xvec{B}, p, \xsym{\xi}, \xsym{\eta})$ inherit from the profiles $(\widetilde{\xvec{u}}^j, \widetilde{\xvec{B}}^j, \widetilde{p}^j, \widetilde{\xsym{\xi}}^j, \widetilde{\xsym{\eta}}^j)_{j \in \{1,\dots, K\}}$ and $(\xvec{V}, P, \xsym{\Xi})$ via Theorems~\Rref{theorem:ret} and~\Rref{theorem:tkloc}, one has  
	\begin{gather*}
			\xvec{u}, \xvec{B} \in \xCn{0}([0,2];\xCn{{m-1,\alpha}}(\overline{\mathcal{E}}; \mathbb{R}^2)) \cap \xLinfty((0,2);\xCn{{m,\alpha}}(\overline{\mathcal{E}}; \mathbb{R}^2)),\\
			p \in \xCn{0}([0,2];\xCn{{m-1,\alpha}}(\overline{\mathcal{E}}; \mathbb{R})) \cap \xLinfty((0,2);\xCn{{m,\alpha}}(\overline{\mathcal{E}}; \mathbb{R})),\\
			\xsym{\xi}, \xsym{\eta} \in \xLinfty((0,2);\xCn{{m-1,\alpha}}(\overline{\mathcal{E}};\mathbb{R}^2)),\\
			\operatorname{supp}(\xsym{\xi}) \subset \omegaup\times[0,2], \quad \operatorname{supp}(\xsym{\eta}) \subset \omegaup \times [0,1],\\
			\xdiv{\xsym{\eta}} = 0 \mbox{ in } \mathcal{E}, \quad \xsym{\eta}\cdot \xvec{n} = 0 \mbox{ on } \partial \mathcal{E}, \quad
			\xsym{\eta} = \xnab^{\perp} \phi, \quad \phi = 0 \, \mbox{ on } \partial \mathcal{E}.
	\end{gather*} 
	Further, we have $\xvec{u}(\cdot, 2)  =  \xsym{0}$ and $\xvec{B}(\cdot, 2)  =  \xsym{0}$.

	\subsection{Proof of \Cref{theorem:ret}}\label{subsection:constr}
	Our proof of \Cref{theorem:ret} is inspired by the return method devised for perfect fluids (\cf~\cite[Part 2, Section 6.2]{Coron2007}), the arguments of \cite[Appendix]{CoronMarbachSueur2020}, and \cite{RisselWang2021}. The velocity shall be viewed as a small perturbation of the curl-free flushing profile $\xvec{y}^*$ selected in \Cref{subsection:flushingprofile}. Meanwhile, the magnetic field is sought near the zero state.
	
	\subsubsection{Preliminaries}\label{subsubsection:prelimconstr}
	To prevent regularity loss, the velocity $\xvec{V}$ and the magnetic field $\xvec{H}$ are temporarily replaced by the symmetrized unknowns (Elsasser variables, \cf~\cite{Elsasser1950})
	\[
		\xvec{z}^{\pm} \coloneq \xvec{V} \pm \xvec{H}.
	\]
	A first observation is that, if $(\xvec{V},\xvec{H}, P)$ satisfy the ideal MHD problem \eqref{equation:MHD00unforcedlargedomain} driven by~$\xsym{\Xi}$, then the pair $(\xvec{z}^+, \xvec{z}^-)$ obeys the interior-controlled system
	\begin{equation}\label{equation:MHD00SimplyConnectedExtendedElsasser}
		\begin{cases}
			\partial_t \xvec{z}^{\pm} + (\xvec{z}^{\mp} \cdot \xsym{\nabla}) \xvec{z}^{\pm}  + \xsym{\nabla} p^{\pm} = \xsym{\Xi} & \mbox{ in } \mathcal{E} \times (0,1),\\
			\xdiv{\xvec{z}^{\pm}} = 0  & \mbox{ in } \mathcal{E} \times (0,1),\\
			\xvec{z}^{\pm} \cdot \xvec{n} = 0  & \mbox{ on } \partial\mathcal{E} \times(0,1),\\
			\xvec{z}^{\pm}(\cdot, 0)  =  \xvec{z}^{\pm}_0 \coloneq \xvec{V}_0 \pm  \xvec{H}_0 & \mbox{ in } \mathcal{E},
		\end{cases}
	\end{equation}
	with $\xnab p^{\pm} = \xnab P$, and vice versa. After acting with the planar curl operator $\xnab\wedge$ on the first line of \eqref{equation:MHD00SimplyConnectedExtendedElsasser}, the gradients $\xnab p^{\pm}$ are dismissed from the analysis. In particular, the scalar quantities \enquote{vorticity $\pm$ current density}, denoted as
	\[
		j^{\pm} \coloneq \xwcurl{\xvec{z}^{\pm}},
	\]
	are governed in $\mathcal{E}\times(0,1)$ by 
	\begin{equation}\label{equation:j+-intr}
		\partial_t j^{\pm} + (\xvec{z}^{\mp} \cdot \xsym{\nabla}) j^{\pm} = G^{\pm}(\xvec{z}^+, \xvec{z}^-) + f,
	\end{equation}
	where
	\[
		G^{\pm}(\xvec{z}^+, \xvec{z}^-) \coloneq - \sum_{l=1}^{2} \xnab z^{\mp}_l \wedge \partial_l \xvec{z}^{\pm}, \quad \xvec{z}^{\pm} = [z^{\pm}_1, z^{\pm}_2], \quad f \coloneq \xwcurl{\xsym{\Xi}}.
	\]
	
	\begin{rmrk}
	The operators $G^{\pm}$ admit favorable representations for obtaining solutions~$\xvec{z}^{\pm}$ to \eqref{equation:MHD00SimplyConnectedExtendedElsasser} as small perturbations of $\xvec{y}^*$. More precisely, given differentiable and divergence-free vector fields $\xvec{z}^{\pm} = [z^{\pm}_1, z^{\pm}_2]$, it holds
	\begin{equation}\label{equation:reprGpm}
		\begin{aligned}
			G^{\pm}(\xvec{z}^+, \xvec{z}^-) & = -\partial_1(z^{\mp}_1 - z^{\pm}_1)(\partial_1z^{\pm}_2 + \partial_2z^{\pm}_1)   - \partial_1(z^{\pm}_2 - z^{\mp}_2)\partial_1z^{\pm}_1 - \partial_2(z^{\pm}_1 - z^{\mp}_1)\partial_1z^{\pm}_1\\
			& = -\partial_1(z^{\mp}_1 - y^*_1)(\partial_1z^{\pm}_2 + \partial_2z^{\pm}_1)  - \partial_1(y^*_1 - z^{\pm}_1)(\partial_1z^{\pm}_2 + \partial_2z^{\pm}_1)   \\ 
			& \quad - \partial_1(z^{\pm}_2 - y^*_2)\partial_1z^{\pm}_1 - \partial_1(y^*_2  - z^{\mp}_2)\partial_1z^{\pm}_1 \\
			& \quad - \partial_2(z^{\pm}_1 - y^*_1)\partial_1z^{\pm}_1 - \partial_2(y^*_1 - z^{\mp}_1)\partial_1z^{\pm}_1.
		\end{aligned}
	\end{equation}
	\end{rmrk}
	
	\paragraph{Functional setup.} For a large number $k > 0$ that will be chosen subsequently, we define the weight (\cf~\cite{RisselWang2021})
	\begin{equation*}\label{equation:weight}
		W_k(t) := \left( \frac{1}{2} + \frac{t}{8} \right)^{-k}
	\end{equation*}
	and seek controlled trajectories of \eqref{equation:MHD00SimplyConnectedExtendedElsasser} in
	\begin{equation*}
		\begin{gathered}
			\xX_{\delta^*,k} \coloneq \Big\{ (\xvec{z}^+, \xvec{z}^-) \in \bigcup_{\beta \in (0,\alpha)}\xCzero([0,1];\xCn{{m,\beta}}_{*}(\overline{\mathcal{E}}; \mathbb{R}^2))^2 \cap \xLinfty((0,1);\xCn{{m,\alpha}}(\overline{\mathcal{E}}; \mathbb{R}^2))^2\, \Big| \\  \xvec{z}^{\pm}(\cdot,0) = \xvec{z}^{\pm}_0 \mbox{ in } \mathcal{E}, \,  \max\limits_{t \in [0,1]} W_k(t)\|\xvec{z}^{\pm} - \xvec{y}^*\|_{m,\alpha,\mathcal{E}}(t) < \delta^* \Big\},
		\end{gathered}
	\end{equation*}
	where $\delta^* \in (0,\nu]$ is arbitrary but fixed. Then, $\xX_{\delta^*,k}\neq \emptyset$ for all initial states $\xvec{z}^{\pm}_0$ that are sufficiently small with respect to $\|\cdot\|_{\alpha,m,\mathcal{E}}$, depending on~$\delta^*$ and $k$. Indeed, given~$\lambda_0 \in \xCinfty([0,1];[0,1])$ with $\lambda_0(0) = 1$,
	one has
	\[
		\max\limits_{t \in [0,1]} W_k(t) \|\xvec{z}^{\pm}_0\|_{m,\alpha,\mathcal{E}}(t) < \delta^* \, \iff \, (\lambda_0\xvec{z}^{+}_0 + \xvec{y}^*,\lambda_0\xvec{z}^{-}_0 + \xvec{y}^*) \in \xX_{\delta^*,k}.
	\]
	
	\subsubsection{Fixed point mapping}\label{subsubsection:constructionF}
	Let $\xZ(\mathcal{E}) \subset \xCinfty(\overline{\mathcal{E}}; \mathbb{R}^2)$ be the space of divergence-free, curl-free, and tangential vector fields introduced in \eqref{equation:Z}, and fix any element $\xvec{Q}^{\#} \in \xZ(\mathcal{E})$ with
	\begin{equation*}\label{equation:normalizedprofileQ}
		\int_{\mathcal{E}} \xvec{Q}^{\#}(\xvec{x}) \cdot \xvec{Q}^{\#}(\xvec{x}) \, \xdx{\xvec{x}} = 1.
	\end{equation*}
	In order to capture the first cohomology projection of $\xvec{V}_0$, we take a non-increasing function $\aleph \in \xCinfty([0,1];\mathbb{R})$ obeying
	\begin{equation}\label{equation:alephdefinition}
		\aleph(s) = \begin{cases}
			\int_{\mathcal{E}} \xvec{V}_0(\xvec{x}) \cdot \xvec{Q}^{\#}(\xvec{x}) \, \xdx{\xvec{x}} & \mbox{ if } s \in [0, 1/3K],\\
			0 & \mbox{ if } s \in [1/2K, 1],
		\end{cases}
	\end{equation}
	recalling that $K$ is the constant from \Cref{lemma:flushing}. 
	
	A mapping $\xmcal{F}\colon\xX_{\delta^*,k} \longrightarrow \xX_{\delta^*,k}$ is defined by assigning to each $(\widetilde{\xvec{z}}^+,\widetilde{\xvec{z}}^-) \in \xX_{\delta^*,k}$ the image
	\[
		\xmcal{F}(\widetilde{\xvec{z}}^+,\widetilde{\xvec{z}}^-) \coloneq (\xvec{z}^+,\xvec{z}^-),
	\]
	where the functions $\xvec{z}^+$ and $\xvec{z}^-$ are constructed from $\widetilde{\xvec{z}}^{\pm}$ through the following steps.

	\paragraph{Step 1 (a). Linearization when $\xvec{H}_0 \neq \xsym{0}$.} Here, we are not aiming to reach any specific target state and simply take $f = 0$ in \eqref{equation:j+-intr}. The curled Elsasser unknowns
	\[
		j^{\pm}\in \bigcup_{\beta \in (0,\alpha)} \xCzero([0,1];\xCn{{m-1,\beta}}(\overline{\mathcal{E}}; \mathbb{R})) \cap \xLinfty((0,1);\xCn{{m-1,\alpha}}(\overline{\mathcal{E}}; \mathbb{R}))
	\]
	are then determined by solving the inhomogeneous linear transport problems
	\begin{equation}\label{equation:IdealControl:constructionf:j+-}
		\begin{cases}
			\partial_t j^{\pm} + (\widetilde{\xvec{z}}^{\mp} \cdot \xsym{\nabla}) j^{\pm} = G^{\pm}(\widetilde{\xvec{z}}^+, \widetilde{\xvec{z}}^-) & \mbox{ in } \mathcal{E} \times (0, 1),\\
			j^{\pm}(\cdot, 0)  =  j^{\pm}_0 \coloneq \xwcurl{\xvec{z}^{\pm}_0} = \xwcurl{(\xvec{V}_0 \pm \xvec{H}_0)} & \mbox{ in } \mathcal{E}.
		\end{cases}
	\end{equation}
	
	\paragraph{Step 1 (b). Linearization when $\xvec{H}_0 = \xsym{0}$.}
	If the initial magnetic field is absent, the goal is to steer the fluid to rest. Thus, we consider the controlled linearized vorticity problem
	\begin{equation}\label{equation:IdealControl:constructionf:w}
		\begin{cases}
			\partial_t w + \frac{1}{2}((\widetilde{\xvec{z}}^{+}+\widetilde{\xvec{z}}^{-}) \cdot \xsym{\nabla}) w = f& \mbox{ in } \mathcal{E} \times (0, 1),\\
			w(\cdot, 0)  =  w_0 \coloneq \xwcurl{\xvec{V}_0} & \mbox{ in } \mathcal{E}
		\end{cases}
	\end{equation}
	and seek a vorticity control $f \in \bigcup_{\beta \in (0,\alpha)} \xCzero([0,1];\xCn{{m-1,\beta}}(\overline{\mathcal{E}};\mathbb{R}))$ that ensures the target condition
	\[
		w(\cdot, 1) = 0  \mbox{ in } \mathcal{E}.
	\]
	
	A suitable force $f$ is defined similarly as in \cite[Appendix]{CoronMarbachSueur2020}. Here, this shall involve a partition of unity adapted to the streamlines of~$\xvec{y}^*$ and~$\widetilde{\xvec{z}}^{\pm}$. 
	Due to Lemmas~\Rref{lemma:flushing} and~\Rref{lemma:nu}, there exist $b \in (0,1/2)$, interior or boundary squares $\xD_1, \dots, \xD_{\widetilde{L}} \subset \mathbb{R}^2$ (contained in $\mathcal{E}$ or having one side in $\mathbb{R}^2\setminus\overline{\mathcal{E}}$) with $\xD_j \cap \overline{\mathcal{E}} \subset \Lambda$ for $j \in \{1,\dots,\widetilde{L}\}$, and open balls~$\widetilde{\xB}_1, \dots, \widetilde{\xB}_{\widetilde{L}} \subset \mathbb{R}^2$ covering the compactum~$\overline{\mathcal{E}}$, and with
	\begin{multline*}\label{equation:cubesquareflush}
		\forall l \in \{1,\dots,\widetilde{L}\}, \, \exists t_l \in (b, 1-b), \, \forall t \in (t_l-b, t_l+b) \colon\\
		\left(\xmcal{Y}^*(\widetilde{\xB}_l\cap \overline{\mathcal{E}}, 0, t) \cup \widetilde{\xsym{\mathcal{V}}}(\widetilde{\xB}_l\cap \overline{\mathcal{E}}, 0, t)\right) \subset \xD_l \cap \Lambda,
	\end{multline*}
	where $\Lambda \subset \omegaup$ denotes the simply-connected subset from \Cref{subsection:setup}, the flow $\xmcal{Y}^*$ is obtained via \eqref{equation:flowofy}, and $\widetilde{\xsym{\mathcal{V}}}$ is the flow of $2^{-1}(\widetilde{\xvec{z}}^{+}+\widetilde{\xvec{z}}^{-})$ given by
	\[
		\xdrv{}{t} \widetilde{\xmcal{V}}(\xvec{x},s,t) = \frac{\widetilde{\xvec{z}}^{+}+\widetilde{\xvec{z}}^{-}}{2}(\widetilde{\xmcal{V}}(\xvec{x},s,t),t), \quad
	\widetilde{\xmcal{V}}(\xvec{x},s,s) = \xvec{x}.
	\]
	Subordinate to $\widetilde{\xB}_1, \dots, \widetilde{\xB}_{\widetilde{L}}$, let $(\widetilde{\mu}_l)_{l = 1,\dots,\widetilde{L}} \subset \xCinfty_0(\mathbb{R}^2;\mathbb{R})$ be a smooth partition of unity in the sense that
	\begin{equation*}
		\forall l \in \{1,\dots,\widetilde{L}\}\colon \operatorname{supp}(\widetilde{\mu}_l) \subset \widetilde{\xB}_l, \quad \forall \xvec{x} \in \overline{\mathcal{E}}\colon \sum_{l=1}^{\widetilde{L}} \widetilde{\mu}_l(\xvec{x}) = 1.
	\end{equation*}
	Thanks to \Cref{lemma:nu}, the balls $\widetilde{\xB}_1, \dots, \widetilde{\xB}_{\widetilde{L}}$ can be selected independently of the pair~$(\widetilde{\xvec{z}}^+, \widetilde{\xvec{z}}^-) \in \xX_{\delta^*,k}$. Hence, we are free to assume throughout that $\widetilde{\mu}_1, \dots, \widetilde{\mu}_{\widetilde{L}}$ only depend on the domain $\mathcal{E}$, the set $\Lambda$, and the profile $\xvec{y}^*$.
	
	\paragraph{Convention.} The partition of unity $(\widetilde{\mu}_l)_{l = 1,\dots,\widetilde{L}}$ is fixed independently of the chosen pair $(\widetilde{\xvec{z}}^{+}, \widetilde{\xvec{z}}^{-})$.
	
	Next, a family $(e_l)_{l\in\{1,\dots,\widetilde{L}\}}$ of building blocks  for $w$ and $f$ is defined by solving the homogeneous linear problems
	\begin{equation}\label{equation:tspsc}
		\begin{cases}
			\partial_t e_l + \frac{1}{2}((\widetilde{\xvec{z}}^++\widetilde{\xvec{z}}^-) \cdot \xdop{\nabla}) e_l = 0 & \mbox{ in } \mathcal{E}\times(0,1),\\
			e_l(\cdot, 0) = \widetilde{\mu}_lw_0 & \mbox{ in } \mathcal{E}.
		\end{cases}
	\end{equation}
	The transport equations in~\eqref{equation:tspsc} describe how the localized initial data~$\widetilde{\mu}_l w_0$ are propagated along the integral curves of $\frac{1}{2}(\widetilde{\xvec{z}}^++\widetilde{\xvec{z}}^-)$. Owing to the flushing property satisfied, due to \Cref{lemma:nu} and the definition of $\xX_{\delta^*,k}$, by the profile~$2^{-1}(\widetilde{\xvec{z}}^++\widetilde{\xvec{z}}^-)$, we thus arrive at explicit expressions for a control~$f$ and the corresponding solution~$w$ to \eqref{equation:IdealControl:constructionf:w}:
	\begin{equation}\label{equation:solftcw}
		w(\xvec{x},t) \coloneq \sum\limits_{l=1}^{\widetilde{L}} \gamma(t-t_l) e_l(\xvec{x},t), \quad f(\xvec{x},t) \coloneq \sum\limits_{l=1}^{\widetilde{L}} \xdrv{\gamma}{t}(t-t_l) e_l(\xvec{x},t),
	\end{equation}
	where $\gamma\colon\mathbb{R}\longrightarrow[0,1]$ is smooth and obeys
	\begin{equation*}
		\gamma(t) = 1 \mbox{ if }  t \in (-\infty,-b], \quad \gamma(t) = 0 \mbox{ if }  t \in [b, +\infty).
	\end{equation*}
	Concerning regularity, from \cref{equation:tspsc,equation:moc} one can infer for some $\beta \in (0, \alpha)$ that
	\begin{equation}\label{equation:regularitywf}
		w, f \in \xCzero([0,1];\xCn{{m-1,\beta}}(\overline{\mathcal{E}};\mathbb{R}^2))  \cap \xLinfty((0,1);\xCn{{m-1,\alpha}}(\overline{\mathcal{E}};\mathbb{R}^2)) .
	\end{equation}
	Let us emphasize that $f$ is supported in $\Lambda\times[0,1]$ and the vortex $w$ given by \eqref{equation:tspsc} meets the constraint $w(\cdot, 1) = 0$.
	Indeed, for $l \in \{1,\dots, \widetilde{L}\}$, one has $\gamma(1-t_l) = 0$, which implies $w(\cdot, 1) = 0$. As the derivative $\xdx{\gamma}/\xdx{t}(t-t_l)$ vanishes (due to \Cref{lemma:nu} and the behavior of $\xmcal{Y}^*$) at all instances $t$ where~$e_l(\cdot,t)$ is nonzero outside of $\Lambda$, it holds
	\begin{equation}\label{equation:suppfeq}
		\operatorname{supp}(f) \subset \omega \times (0,1).
	\end{equation}

	\paragraph{Step 2. Assembly.} When $\xvec{H}_0 \neq \xsym{0}$, let $j^{\pm}$ be the functions determined in \eqref{equation:IdealControl:constructionf:j+-}. Otherwise, set $j^{\pm} \coloneq w$ with $w$ from \eqref{equation:solftcw}. The definition of $\xmcal{F}(\widetilde{\xvec{z}}^+, \widetilde{\xvec{z}}^-)$ is now completed by obtaining the profiles $\xvec{z}^{\pm} \in \bigcup_{\beta \in (0,\alpha)}\xCzero([0,1];\xCn{{m,\beta}}_{*}(\overline{\mathcal{E}}; \mathbb{R}^2))$ as the solutions to
	\begin{equation}\label{equation:IdealControl:divcurlop}
		\begin{aligned}
			\begin{cases}
				\xwcurl{\xvec{z}^{\pm}} = j^{\pm} & \mbox{ in }  \mathcal{E}\times[0,1],\\
				\xdiv{\xvec{z}^{\pm}} = 0 & \mbox{ in }  \mathcal{E}\times[0,1],\\
				\xvec{z}^{\pm} \cdot \xvec{n} = 0 & \mbox{ on }  \partial \mathcal{E}\times[0,1],\\
				\int_{\mathcal{E}} \xvec{z}^{\pm}(\xvec{x}, t) \cdot \xvec{Q}^{\#}(\xvec{x}) \, \xdx{\xvec{x}} = \aleph(t) + \int_{\mathcal{E}} \xvec{y}^*(\xvec{x}, t) \cdot \xvec{Q}^{\#}(\xvec{x}) \, \xdx{\xvec{x}} & \mbox{ for } t \in [0,1],
			\end{cases}
		\end{aligned}
	\end{equation}
	where $\aleph$ is the function from \eqref{equation:alephdefinition}. Namely, we consider for $t \in [0,1]$ the Poisson problems
	\[
		- \Delta A^{\pm}(\cdot, t) = j^{\pm}(\cdot, t) \, \mbox{ in } \mathcal{E}, \quad A^{\pm}(\cdot, t) = 0 \, \mbox{ on } \partial \mathcal{E}
	\]
	and define
	\begin{equation}\label{equation:defF}
		\xvec{z}^{\pm}(\cdot,t) \coloneq \xnab^{\perp} A^{\pm}(\cdot, t) +  \left(\aleph(t) + \int_{\mathcal{E}} \xvec{y}^*(\xvec{x}, t) \cdot \xvec{Q}^{\#}(\xvec{x}) \, \xdx{\xvec{x}}\right) \xvec{Q}^{\#}.
	\end{equation}
	\subsubsection{Equations satisfied by a fixed point}\label{subsubsection:sfp}
	We anticipate the analysis of \Cref{subsection:existencefixedpoint} below, which provides the existence of a fixed point of the map~$\xmcal{F}$ defined via \eqref{equation:defF}, and where it is shown that the respective iterations are uniformly bounded in $\xLinfty((0,1);\xCn{{m,\alpha}}(\overline{\mathcal{E}};\mathbb{R}^2))^2$. Thus, let us for the moment being assume that
	\[
		\xmcal{F}(\xvec{v}^+, \xvec{v}^-) = (\xvec{v}^+, \xvec{v}^-)
	\]
	holds for elements
	\begin{equation}\label{equation:regvpm}
		\xvec{v}^+, \xvec{v}^- \in \xCzero([0,1];\xCn{{1,\alpha}}(\overline{\mathcal{E}};\mathbb{R}^2)) \cap \xLinfty((0,1);\xCn{{m,\alpha}}(\overline{\mathcal{E}};\mathbb{R}^2)).
	\end{equation}
	Recalling that $\xvec{v}^{\pm}$ are Elsasser variables, the associated velocity and magnetic field are recovered via
	\begin{equation}\label{equation:devVH}
		\xvec{V} \coloneq \frac{\xvec{v}^++\xvec{v}^-}{2}, \quad \xvec{H} \coloneq \frac{\xvec{v}^+-\xvec{v}^-}{2}.
	\end{equation}
	We show now that there exist a pressure~$P$ and a physically localized control~$\xsym{\Xi}$ such that $(\xvec{V}, \xvec{H}, P, \xsym{\Xi})$ obey
	\begin{itemize}
		\item the incompressible ideal MHD system \eqref{equation:MHD00unforcedlargedomain} when $\xvec{H}_0 \neq \xsym{0}$,
		\item the incompressible Euler equations \eqref{equation:Eulernctrl} when $\xvec{H}_0 = \xsym{0}$.
	\end{itemize}
	The force
	\[
		\xsym{\Xi} \in \xCzero([0,1];\xCn{{m-2,\alpha}}(\overline{\mathcal{E}};\mathbb{R}^2))  \cap \xLinfty((0,1);\xCn{{m-1,\alpha}}(\overline{\mathcal{E}};\mathbb{R}^2))
	\]
	will split into a control $\xvec{F}$, acting effectively on $\xwcurl{\xvec{V}}$, and a spatially smooth profile~$\xvec{R}$ that corrects the first cohomology projection of the velocity equation. The subsequent analysis eventually leads to
	\begin{equation*}\label{equation:Xidev}
		\xsym{\Xi} = \xvec{F} + \xvec{R}, \quad \operatorname{supp}(\xsym{\Xi}) \subset \omegaup\times[0,1], \quad \xwcurl{\xvec{F}} = f, \quad  \xwcurl{\xvec{R}} = 0.
	\end{equation*}
	\paragraph{The force $\xvec{F}$.} When $\xvec{H}_0 \neq \xsym{0}$, we simply take $\xvec{F} = \xsym{0}$. In the case $\xvec{H}_0 = \xsym{0}$, the vorticity control $f$ from \eqref{equation:solftcw} is active; using the limiting equation for the vorticity, and Sobolev embeddings (\cf~\Cref{remark:regularity} below), one can further specify the regularity of~$f$. In view of \cref{equation:tspsc,equation:regularitywf,equation:suppfeq}, one obtains
	\[
		\xvec{F} \in \xCzero([0,1];\xCn{{m-2,\alpha}}(\overline{\mathcal{E}};\mathbb{R}^2))  \cap \xLinfty((0,1);\xCn{{m-1,\alpha}}(\overline{\mathcal{E}};\mathbb{R}^2))
	\] 
	by integrating the relation $\xwcurl{\xvec{F}} = f$ on a rectangle holding the spatial support of $f$ (see \cite[Appendix]{CoronMarbachSueur2020} for explicit formulas, which involve the squares $\xD_1,\dots,\xD_{\widetilde{L}}$) such that 
	\[
		\xwcurl{\xvec{F}} = f, \quad \operatorname{supp}(\xvec{F}) \subset \omegaup\times(0,1).
	\]

	\paragraph{The force $\xvec{R}$.} Since $(\xvec{V}, \xvec{H})$ arise via \eqref{equation:devVH} from the div-curl systems~\eqref{equation:IdealControl:divcurlop} with prescribed $\xZ(\mathcal{E})$-projections, one has 
	\begin{equation*}
		\xnab^{\perp}(\xvec{H}\wedge\xvec{V}) = (\xvec{V} \cdot \xsym{\nabla}) \xvec{H} - (\xvec{H} \cdot \xsym{\nabla}) \xvec{V}, \quad \int_{\mathcal{E}} \xvec{H}(\xvec{x},\cdot) \cdot \xvec{Q}^{\#}(\xvec{x}) \, \xdx{\xvec{x}} = 0.
	\end{equation*}
	Known vector calculus identities, the boundary conditions in \eqref{equation:IdealControl:divcurlop}, and arguments similar to \eqref{equation:consv}, subsequently yield
	\begin{gather*}
		\xdiv{\left( (\xvec{V} \cdot \xsym{\nabla}) \xvec{H} - (\xvec{H} \cdot \xsym{\nabla}) \xvec{V}\right)} = 0, \quad \left( (\xvec{V} \cdot \xsym{\nabla}) \xvec{H} - (\xvec{H} \cdot \xsym{\nabla}) \xvec{V}\right) \cdot \xvec{n} = 0,\\
		\int_{\mathcal{E}}\left(\partial_t \xvec{H} + (\xvec{V} \cdot \xsym{\nabla}) \xvec{H} - (\xvec{H} \cdot \xsym{\nabla}) \xvec{V} \right)(\xvec{x}, \cdot) \cdot \xvec{Q}^{\#}(\xvec{x}) \, \xdx{\xvec{x}} = 0.
	\end{gather*}
	Accordingly, 
	\begin{equation}\label{equation:MHD00SimplyConnectedExtendedC}
		\begin{cases}
			\xwcurl{\left(\partial_t \xvec{V} + (\xvec{V} \cdot \xsym{\nabla}) \xvec{V} - (\xvec{H} \cdot \xsym{\nabla})\xvec{H} + \xsym{\nabla} \widetilde{P} - \xvec{F}\right)} = 0 & \mbox{ in } \mathcal{E} \times (0,1),\\
			\xdiv{\left(\partial_t \xvec{V} + (\xvec{V} \cdot \xsym{\nabla}) \xvec{V} - (\xvec{H} \cdot \xsym{\nabla})\xvec{H} + \xsym{\nabla} \widetilde{P} - \xvec{F}\right)} = 0 & \mbox{ in } \mathcal{E} \times (0,1),\\
			\left(\partial_t \xvec{V} + (\xvec{V} \cdot \xsym{\nabla}) \xvec{V} - (\xvec{H} \cdot \xsym{\nabla})\xvec{H} + \xsym{\nabla} \widetilde{P} - \xvec{F}\right) \cdot \xvec{n} = 0 & \mbox{ on } \partial \mathcal{E} \times (0,1),\\
			\partial_t \xvec{H} + (\xvec{V} \cdot \xsym{\nabla}) \xvec{H} - (\xvec{H} \cdot \xsym{\nabla}) \xvec{V} = \xsym{0} & \mbox{ in } \mathcal{E} \times (0,1),
		\end{cases}
	\end{equation}
	where the pressure
	\[
		\widetilde{P} \in \xCzero([0,1];\xCn{{1,\alpha}}(\overline{\mathcal{E}};\mathbb{R})) \cap \xLinfty((0,1);\xCn{{m,\alpha}}(\overline{\mathcal{E}};\mathbb{R}))
	\]
	is governed by an elliptic Neumann problem for which classical Schauder estimates (\cf~\cite{Nardi2014}) are available, \ie,
	\begin{equation}\label{equation:NeumannPtilde1}
			\begin{cases}
			\Delta \widetilde{P} = \xdiv{\left(\xvec{F} - (\xvec{V} \cdot \xsym{\nabla}) \xvec{V} + (\xvec{H} \cdot \xsym{\nabla})\xvec{H}\right)}  & \mbox{ in } \mathcal{E} \times (0,1),\\
			\xnab \widetilde{P} \cdot \xvec{n} = \left(\xvec{F} - (\xvec{V} \cdot \xsym{\nabla}) \xvec{V} + (\xvec{H} \cdot \xsym{\nabla})\xvec{H}\right) \cdot \xvec{n} & \mbox{ on } \partial \mathcal{E} \times (0,1).
		\end{cases}
	\end{equation}
	So far, the velocity equation for $\xvec{V}$ is not seen to be satisfied. In general, there exists a nontrivial function $\rho\colon[0,1] \longrightarrow \mathbb{R}$ with
	\begin{equation}\label{equation:ewqrr}
		\partial_t \xvec{V} + (\xvec{V} \cdot \xsym{\nabla}) \xvec{V} - (\xvec{H} \cdot 	\xsym{\nabla})\xvec{H} + \xsym{\nabla} \widetilde{P} - \xvec{F} = \rho \xvec{Q}^{\#}.
	\end{equation}
	Nevertheless, the right-hand side $\rho \xvec{Q}^{\#}$ in \eqref{equation:ewqrr} can be absorbed by $\xvec{R}$ and a pressure gradient. To this end, as the difference $\overline{\mathcal{E}}\setminus\Lambda$ is simply-connected, we begin with selecting any smooth $\widetilde{h}$ with $\xnab \widetilde{h} = \rho \xvec{Q}^{\#}$ in $(\overline{\mathcal{E}}\setminus\Lambda)\times[0,1]$. This function is then extended to the entire domain via
	\[
		\widehat{h}(\xvec{x},t) \coloneq \begin{cases}
			\widetilde{h}(\xvec{x},t) & \mbox{ in }  (\overline{\mathcal{E}}\setminus\Lambda)\times[0,1],\\
			0 & \mbox{ otherwise.}
		\end{cases}
	\]
	To regularize $\widehat{h}$, let $\widehat{\chi} \in \xCinfty_0(\mathbb{R}^2; \mathbb{R})$ be a cutoff with $\widehat{\chi} = 1$ in a neighborhood of $\overline{\mathcal{E}}\setminus\omegaup$ and $\widehat{\chi} = 0$ near $\Lambda$. Subsequently, we define for each $(\xvec{x},t) \in \overline{\mathcal{E}}\times[0,1]$ the smooth potential
	\begin{equation*}\label{equation:hdev}
		h(\xvec{x},t) \coloneq \widehat{\chi}(\xvec{x}) \widehat{h}(\xvec{x},t).
	\end{equation*}
	As a result, the definition of $\xsym{\Xi}$ satisfying $\operatorname{supp}(\xsym{\Xi}) \subset \omegaup\times[0,1]$ is completed by introducing the profiles
	\begin{equation}\label{equation:Rdev}
		\xvec{R} \coloneq \rho \xvec{Q}^{\#} - \xnab h, \quad P \coloneq \widetilde{P}-h, \quad \xsym{\Xi} \coloneq \xvec{F} + \xvec{R},
	\end{equation}
	which ensure in combination with \eqref{equation:MHD00SimplyConnectedExtendedC} that
	\begin{equation}\label{equation:svp}
		\partial_t \xvec{V} + (\xvec{V} \cdot \xsym{\nabla}) \xvec{V} - (\xvec{H} \cdot \xsym{\nabla})\xvec{H} + \xsym{\nabla} P  = \xsym{\Xi}.
	\end{equation}

	\begin{rmrk}\label{remark:regularity}
		The regularity stated in \eqref{equation:regvpm} can be specified further by invoking the equations for $(\xvec{V}, \xvec{H}, P)$, \ie, \cref{equation:MHD00SimplyConnectedExtendedC,equation:NeumannPtilde1,equation:ewqrr,equation:Rdev,equation:svp}, together with the Sobolev embedding
		\[
			\xWn{{1,\infty}}((0,1);\xCn{{m-1,\alpha}}(\overline{\mathcal{E}};\mathbb{R}^r)) \hookrightarrow \xCzero((0,1);\xCn{{m-1,\alpha}}(\overline{\mathcal{E}}; \mathbb{R}^r)).
		\]
		For instance, one finds that $\xvec{V}$, $\xvec{H}$ with $r = 2$ and $P$ with $r = 1$ belong respectively to the space $\xCzero((0,1);\xCn{{m-1,\alpha}}(\overline{\mathcal{E}}; \mathbb{R}^r))$.
	\end{rmrk}

	\subsubsection{Existence of a unique fixed point}\label{subsection:existencefixedpoint}
	The remainder of \Cref{theorem:ret}'s proof is organized in three main steps. 1) It is first demonstrated that~$\xmcal{F}$ constitutes a self-map of $\xX_{\delta^*,k}$, given~$k > 0$ sufficiently large and assuming $\|\xvec{z}^{\pm}_0\|_{m,\alpha,\mathcal{E}}$ small. 2) For a large integer~$n\geq1$, the~$n$-fold composition~$\xmcal{F}^n$ is seen to form a contraction with respect to the Banach space
	\begin{equation}\label{equation:spaceY}
		\begin{gathered}
			\xY \coloneq \xCzero([0,1];\xCn{{1,\alpha}}(\overline{\mathcal{E}};\mathbb{R}^2)) \times \xCzero([0,1];\xCn{{1,\alpha}}(\overline{\mathcal{E}};\mathbb{R}^2)), \\
			\|(\xvec{f},\xvec{g})\|_{\xY} \coloneq \max\limits_{t \in [0,1]} \left( \|\xvec{f}\|_{1,\alpha,\mathcal{E}}(t) + \|\xvec{g}\|_{1,\alpha,\mathcal{E}}(t) \right).
		\end{gathered}
	\end{equation}
	In particular, this allows to conclude that~$\xmcal{F}$ and $\xmcal{F}^n$ uniquely extend to continuous self-maps possessing a fixed point in the $\|\cdot\|_{\xY}$-closure of~$\xX_{\delta^*,k}$. Indeed, one can apply Banach's fixed point theorem (\cf~\cite[Theorem 3]{Fernandez-CaraSantosSouza2016}) to $\xmcal{F}^n$, and by a uniqueness argument verify that $\xmcal{F}$ and $\xmcal{F}^n$ have the same fixed point. The desired solution~$(\xvec{V}, \xvec{H}, P)$ to \eqref{equation:MHD00unforcedlargedomain} and a suitable control $\xsym{\Xi}$ are afterwards found by following the recipe already described in \Cref{subsubsection:sfp}. 3) It is explained in \Cref{subsection:initialdatabounds} how the initial data bounds $0 < \delta_0 < \dots < \delta_K$ can be selected.
	
	The following estimate follows directly from the solution representation \eqref{equation:moc} for the involved transport equations (such estimates have also been stated in \cite{Bardos_EulerEq}, \cite[Lemma 2]{Fernandez-CaraSantosSouza2016}).
	\begin{lmm}\label{lemma:transptest}
		Let $l \in \mathbb{N}$, $M > 0$, $T > 0$, and suppose that $v$ solves in $\mathcal{E}\times(0,T)$ the transport problem
		\[
			\partial_t v + (\xvec{z} \cdot \xnab) v = g \in \xCzero([0,T];\xCn{{l,\alpha}}(\overline{\mathcal{E}};\mathbb{R})),
		\]
		where the drift field obeys
		\begin{gather*}
			\xvec{z} \in \xCzero([0,T];\xCn{{l+1}}(\overline{\mathcal{E}};\mathbb{R}^2)) \cap \xLinfty((0,T);\xCn{{l+1,\alpha}}(\overline{\mathcal{E}};\mathbb{R})), \\ \xvec{z} \cdot \xvec{n} = 0 \mbox{ on } \partial\mathcal{E}\times(0,T), \quad \max\limits_{t \in [0,T]}\|\xvec{z}\|_{l+1,\alpha,\mathcal{E}}(t) < M.
		\end{gather*}
		Then, there is a constant $\widetilde{C} = \widetilde{C}(l,\alpha, M, T) > 0$, independent of $v$, $\xvec{z}$, and $g$, such that
		\[
			\forall t \in [0,T] \colon \,  \|v\|_{l,\alpha,\mathcal{E}}(t) \leq \widetilde{C} \left( \int_0^t  \|g\|_{l,\alpha,\mathcal{E}}(r) \, \xdx{r} + \|v\|_{l,\alpha,\mathcal{E}}(0)\right).
		\]
	\end{lmm}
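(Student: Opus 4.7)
The approach is the method of characteristics combined with Gr\"onwall's inequality. Since $\xvec{z}$ is tangential to $\partial\mathcal{E}$, its flow $\xmcal{Z}$, defined by analogy with \eqref{equation:flowofz}, preserves $\overline{\mathcal{E}}$ for all $s,t\in[0,T]$, and the solution of the transport problem admits the representation
\[
v(\xvec{x},t) = v_0(\xmcal{Z}(\xvec{x},t,0)) + \int_0^t g(\xmcal{Z}(\xvec{x},t,s),s)\,\xdx{s}.
\]
The plan is to control $\|v(\cdot,t)\|_{l,\alpha,\mathcal{E}}$ by passing H\"older norms through the compositions $v_0\circ\xmcal{Z}(\cdot,t,0)$ and $g(\cdot,s)\circ\xmcal{Z}(\cdot,t,s)$; this requires first bounding the flow in $\xCn{{l+1,\alpha}}$.

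For the flow estimates, I differentiate $\partial_t\xmcal{Z}(\xvec{x},s,t) = \xvec{z}(\xmcal{Z}(\xvec{x},s,t),t)$ in $\xvec{x}$ up to order $l+1$ and iteratively apply Gr\"onwall's inequality; since the coefficients of the resulting linear ODE systems are controlled in $\xCn{{0,\alpha}}$ by $\max_{t\in[0,T]}\|\xvec{z}\|_{l+1,\alpha,\mathcal{E}}(t)<M$, this produces a constant $C_1 = C_1(l,\alpha,M,T)$ with
\[
\sup\limits_{s,t\in[0,T]}\|\xmcal{Z}(\cdot,s,t)\|_{l+1,\alpha,\mathcal{E}} \leq C_1
\]
and an analogous bound on the inverse map $\xmcal{Z}(\cdot,t,s)$. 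Combining this with the standard H\"older composition estimate $\|f\circ\Phi\|_{l,\alpha,\mathcal{E}} \leq \overline{C}(\|\Phi\|_{l+1,\alpha,\mathcal{E}})\|f\|_{l,\alpha,\mathcal{E}}$, applied pointwise in $s$ inside the integral, yields
\[
\|v\|_{l,\alpha,\mathcal{E}}(t) \leq \widetilde{C}\left(\|v_0\|_{l,\alpha,\mathcal{E}} + \int_0^t \|g\|_{l,\alpha,\mathcal{E}}(r)\,\xdx{r}\right),
\]
with $\widetilde{C} = \widetilde{C}(l,\alpha,M,T)$ depending only on $C_1$, $l$, and $\alpha$. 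Since $v(\cdot,0)=v_0$, this is exactly the claim.

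The main technical point is ensuring that the top-order H\"older seminorm $[\partial^\beta v(\cdot,t)]_\alpha$ with $|\beta|=l$ survives composition with $\xmcal{Z}(\cdot,t,0)$, which is precisely why the hypothesis imposes $\xvec{z}\in\xCn{{l+1,\alpha}}$ rather than $\xCn{{l,\alpha}}$. An equivalent and perhaps more transparent route is to apply $\partial^\beta$ directly to the PDE, rewrite the commutator $[\partial^\beta,\xvec{z}\cdot\xnab]v$ as a sum of products of $\partial^\gamma\xvec{z}$ and $\partial^\delta v$ with $|\gamma|\leq l+1$ and $|\delta|\leq l$, bound these terms in $\xCn{{0,\alpha}}$ by $C\|v\|_{l,\alpha,\mathcal{E}}$ via H\"older product inequalities, and close the argument with Gr\"onwall's lemma after integrating along the characteristics of $\xvec{z}$.
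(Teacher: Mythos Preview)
Your proof is correct and follows precisely the route the paper indicates: the paper does not give a detailed argument but simply states that the estimate ``follows directly from the solution representation \eqref{equation:moc} for the involved transport equations'' and cites \cite{Bardos_EulerEq} and \cite[Lemma~2]{Fernandez-CaraSantosSouza2016}. Your expansion via flow-map bounds in $\xCn{{l+1,\alpha}}$ and H\"older composition estimates is exactly the standard way to make that sentence rigorous, and the alternative commutator-plus-Gr\"onwall route you sketch at the end is an equally valid variant.
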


	\paragraph{Convention.}
		We only present the case $\xvec{H}_0 \neq \xsym{0}$. If $\xvec{H}_0 = \xsym{0}$, one has only to consider the vorticity equation of $2$D perfect fluids, and all efforts reduce to analyzing $(e_l)_{l\in\{1,\dots,\widetilde{L}\}}$ in~\eqref{equation:tspsc} similarly as $j^{\pm}$ below  (one can also use the estimates known from \cite{Coron1996EulerEq,Glass2000,Fernandez-CaraSantosSouza2016}).

	\paragraph{Notation.} Constants of the form $C > 0$ are generic and may change from line to line during the estimates, depending only on fixed objects such as $m$, $\alpha$, $\xvec{y}^*$, $\nu$, $\xvec{Q}^{\#}$, and the domain.

	\paragraph{$\xmcal{F}$ is a self-map on $\xX_{\delta^*,k}$.}   
	Let $(\widetilde{\xvec{z}}^+,\widetilde{\xvec{z}}^-) \in \xX_{\delta^*,k}$ and $(\xvec{z}^+,\xvec{z}^-) = \xmcal{F}(\widetilde{\xvec{z}}^+, \widetilde{\xvec{z}}^-)$ be obtained from $j^{\pm}$ via \eqref{equation:IdealControl:divcurlop}. 
	Hence, the differences~$\xvec{z}^{\pm} - \xvec{y}^{*}$ solve for $t \in [0, 1]$ the div-curl problems
	\begin{equation*}\label{equation:IdealControl:divcurldiffz+-y}
		\begin{aligned}
			\begin{cases}
				\xwcurl{\left(\xvec{z}^{\pm}(\cdot,t) -\xvec{y}^{*}(\cdot,t)\right)} = j^{\pm}(\cdot,t) & \mbox{ in }  \mathcal{E},\\
				\xdiv{\left(\xvec{z}^{\pm}(\cdot,t) -\xvec{y}^{*}(\cdot,t) \right)} = 0 & \mbox{ in }  \mathcal{E},\\
				\left(\xvec{z}^{\pm}(\cdot,t) - \xvec{y}^{*}(\cdot,t)\right)\cdot \xvec{n} = 0 & \mbox{ on }  \partial \mathcal{E},\\
				\int_{\mathcal{E}} \left(\xvec{z}^{\pm}(\xvec{x},t) -\xvec{y}^{*}(\xvec{x},t)\right) \cdot \xvec{Q}^{\#}(\xvec{x}) \, \xdx{\xvec{x}} = \aleph(t),
			\end{cases}
		\end{aligned}
	\end{equation*}
	Moreover, the definition of $\aleph$ in \eqref{equation:alephdefinition} and elliptic regularity yield
	\begin{equation}\label{equation:alephbound}
		\begin{gathered}
			\|\aleph\|_{\xCzero([0,1];\mathbb{R})} \leq \left| \int_{\mathcal{E}} \xvec{V}_0(\xvec{x}) \cdot \xvec{Q}^{\#}(\xvec{x}) \, \xdx{\xvec{x}}\right| \leq C\left(\|\xvec{z}^+_0\|_{m, \alpha, \mathcal{E}} + \|\xvec{z}^-_0\|_{m, \alpha, \mathcal{E}}\right),\\
			\|\xvec{z}^{\pm}-\xvec{y}^{*}\|_{m,\alpha,\mathcal{E}}(t) \leq C\left(\| j^{\pm} \|_{m-1,\alpha,\mathcal{E}}(t) + |\aleph(t)|\right), \quad t \in [0,1].
		\end{gathered}
	\end{equation}
	According to \eqref{equation:alephbound}, the membership $(\xvec{z}^+,\xvec{z}^-) \in \xX_{\delta^*,k}$ can in what follows be verified by means of showing that
	\begin{equation}\label{equation:desbd}
		\max\limits_{t \in [0,1]} CW_k(t)\left(\|j^{+}\|_{m-1,\alpha,\mathcal{E}}(t) + \|j^{-}\|_{m-1,\alpha,\mathcal{E}}(t) + |\aleph(t)|\right) < \delta^*.
	\end{equation}
	To begin with, after applying \Cref{lemma:transptest} to the equations for $j^{\pm}$ in \eqref{equation:IdealControl:constructionf:j+-}, one has for each $t \in [0,1]$ the estimate
	\begin{multline}\label{equation:wkjpmest}
		W_k(t)\|j^{\pm}\|_{m-1,\alpha,\mathcal{E}}(t) \\
		 \leq C W_k(t)\left(\int_0^t \|G^{\pm}(\widetilde{\xvec{z}}^+,\widetilde{\xvec{z}}^-) \|_{m-1,\alpha,\mathcal{E}}(s) \, \xdx{s} +  \|j^{\pm}_0\|_{m-1,\alpha,\mathcal{E}}\right),
	\end{multline}
	where we used the uniform bound
	\begin{equation*}
		\begin{aligned}
		 \max_{t\in[0,1]}\|\widetilde{\xvec{z}}^{\pm}\|_{m,\alpha,\mathcal{E}}(t) \leq \nu + \max_{t\in[0,1]} \|\xvec{y}^*\|_{m,\alpha,\mathcal{E}}(t) \leq C,
		\end{aligned}
	\end{equation*}
	which follows from the definition of $\xX_{\delta^*,k}$ and $\delta^* < \nu$ (noting that $\nu$ is fixed from the start).
	Therefore, resorting to $W_k$'s description and employing in \eqref{equation:wkjpmest} the representations of $G^{\pm}$ from~\eqref{equation:reprGpm}, we arrive at
	\begin{multline}\label{equation:estimatee+-l}
		W_k(t)\|j^{\pm}\|_{m-1,\alpha,\mathcal{E}}(t) \\
		\begin{aligned}
			& \leq C W_k(t)\left(\int_0^t \|\widetilde{\xvec{z}}^+ -\widetilde{\xvec{z}}^-\|_{m,\alpha,\mathcal{E}}(s) \, \xdx{s} +  \|j^{\pm}_0\|_{m-1,\alpha,\mathcal{E}}\right)\\
			& \leq C W_k(t)\int_0^t \frac{W_k(s)}{W_k(s)}\left(\|\widetilde{\xvec{z}}^+ -\xvec{y}^*\|_{m,\alpha,\mathcal{E}}(s) + \|\widetilde{\xvec{z}}^- -\xvec{y}^*\|_{m,\alpha,\mathcal{E}}(s)\right) \, \xdx{s}\\
			& \quad +  C W_k(t) \|j^{\pm}_0\|_{m-1,\alpha,\mathcal{E}}\\
			& \leq C W_k(t)\left(\int_0^t \frac{2\nu}{W_k(s)} \, \xdx{s} +  \|j^{\pm}_0\|_{m-1,\alpha,\mathcal{E}}\right)\\
			& \leq C \left(\frac{1}{2}+\frac{t}{8}\right)^{-k}\left(\int_0^t \left(\frac{1}{2}+\frac{s}{8}\right)^k \, \xdx{s} +  \|j^{\pm}_0\|_{m-1,\alpha,\mathcal{E}}\right)\\
			& \leq \frac{5C}{k+1} + C W_k(t) \|j^{\pm}_0\|_{m-1,\alpha,\mathcal{E}}\\
			& < \delta^*,
		\end{aligned}
	\end{multline}
	where the last estimate holds provided that
	\begin{itemize}
		\item $k > 0$ is taken large in dependence on  $m$, $\alpha$, $\xvec{y}^*$, $\delta^*$, $\nu$, and the domain;
		\item the norms $\|\xvec{z}^{\pm}_0\|_{m,\alpha,\mathcal{E}}$ of the initial states are assumed sufficiently small depending on $k$, $m$, $\alpha$, $\xvec{y}^*$, $\delta^*$, $\nu$, and the domain.
	\end{itemize}
	Since $\aleph(t)$ is for all $t \in [0,1]$ bounded by the initial data (\cf~\eqref{equation:alephbound}), after possibly reducing  $\|\xvec{z}^{\pm}_0\|_{m,\alpha,\mathcal{E}}$ further, the estimate \eqref{equation:estimatee+-l} implies \eqref{equation:desbd}.

	\paragraph{$\xmcal{F}^n$ is a contraction.} In order to determine a large~$n \in \mathbb{N}$ such that the $n$-fold composition $\xmcal{F}^n = \xmcal{F}\circ \dots \circ\xmcal{F}$ forms a contraction with respect to $\|\cdot\|_{\xY}$, as defined in \eqref{equation:spaceY}, we start with arbitrary elements
	\[
		(\widetilde{\xvec{z}}^{+,1},\widetilde{\xvec{z}}^{-,1}), (\widetilde{\xvec{z}}^{+,2},\widetilde{\xvec{z}}^{-,2}) \in \xX_{\delta^*,k}, \quad \widetilde{\xvec{Z}}^{\pm} \coloneq \widetilde{\xvec{z}}^{\pm,1} - \widetilde{\xvec{z}}^{\pm,2}.
	\]
	For $i \in \{1,2\}$, the definition of $\xmcal{F}$ gives rise to the associated functions
	\[
		j^{\pm,i}, \quad J^{\pm} \coloneq j^{\pm,1}-j^{\pm,2}, \quad (\xvec{z}^{+,i}, \xvec{z}^{-,i}) \coloneq \xmcal{F}(\widetilde{\xvec{z}}^{+,i}, \widetilde{\xvec{z}}^{-,i}), \quad \xvec{Z}^{\pm} \coloneq \xvec{z}^{\pm,1} - \xvec{z}^{\pm,2}.
	\] 
	In particular, the differences $J^{\pm}$ solve in $\mathcal{E}\times(0,1)$ the initial value problems
	\begin{equation}\label{equation:eqJpm}
		\begin{cases}
			\partial_t J^{\pm} + (\widetilde{\xvec{z}}^{\mp,1} \cdot \xsym{\nabla}) J^{\pm} = - (\widetilde{\xvec{Z}}^{\mp} \cdot \xsym{\nabla}) j^{\pm,2} + G^{\pm}(\widetilde{\xvec{z}}^{+,1}, \widetilde{\xvec{z}}^{-,1}) - G^{\pm}(\widetilde{\xvec{z}}^{+,2}, \widetilde{\xvec{z}}^{-,2}),\\
			J^{\pm}(\cdot, 0) = 0.
		\end{cases}
	\end{equation}
	Because any pair $(\xvec{v}^+, \xvec{v}^-) \in \xX_{\delta^*,k}$ satisfies
	\[
		\|\xvec{v}^{\pm}\|_{1,\alpha,\mathcal{E}}(t) \leq \nu + \|\xvec{y}^*\|_{1,\alpha,\mathcal{E}}(t) \leq C,
	\]
	the representations for $G^{\pm}$ in \eqref{equation:reprGpm} yield
	\begin{equation}\label{equation:diffGpm}
		\|G^{\pm}(\widetilde{\xvec{z}}^{+,1}, \widetilde{\xvec{z}}^{-,1}) - G^{\pm}(\widetilde{\xvec{z}}^{+,2}, \widetilde{\xvec{z}}^{-,2})\|_{0,\alpha,\mathcal{E}} \leq C \left(\|\widetilde{\xvec{Z}}^{+}\|_{1,\alpha,\mathcal{E}} + \|\widetilde{\xvec{Z}}^{-}\|_{1,\alpha,\mathcal{E}}\right).
	\end{equation}
	Furthermore, the differences $\xvec{Z}^{\pm}$ satisfy for all $t \in [0,1]$ the div-curl system with trivial $\xZ(\mathcal{E})$-projections
	\begin{equation*}
		\begin{aligned}
			\begin{cases}
				\xwcurl{\xvec{Z}^{\pm}(\cdot,t)} = J^{\pm}(\cdot,t) & \mbox{ in }  \mathcal{E},\\
				\xdiv{\xvec{Z}^{\pm}(\cdot,t)} = 0 & \mbox{ in }  \mathcal{E},\\
				\xvec{Z}^{\pm}(\cdot,t)\cdot \xvec{n} = 0 & \mbox{ on }  \partial \mathcal{E},\\
				\int_{\mathcal{E}} \xvec{Z}^{\pm}(\xvec{x},t) \cdot \xvec{Q}^{\#}(\xvec{x}) \, \xdx{\xvec{x}} = 0.
			\end{cases}
		\end{aligned}
	\end{equation*}
	Resorting to elliptic regularity estimates for $\xvec{Z}^{\pm}$, applying \Cref{lemma:transptest} to \eqref{equation:eqJpm} with vanishing initial data, and invoking \eqref{equation:diffGpm}, we get
	\begin{equation}\label{equation:diffesta}
		\begin{aligned}
			\| \xvec{Z}^{+} \|_{1,\alpha,\mathcal{E}}(t) + \| \xvec{Z}^{-} \|_{1,\alpha,\mathcal{E}}(t) & \leq C \left(\|J^{+}\|_{0,\alpha,\mathcal{E}}(t) + \| J^{-} \|_{0,\alpha,\mathcal{E}}(t)\right)\\
			& \leq C \int_0^t \left(\|\widetilde{\xvec{Z}}^{+}\|_{1,\alpha,\mathcal{E}}(s) + \|\widetilde{\xvec{Z}}^{-}\|_{1,\alpha,\mathcal{E}}(s)\right)  \, \xdx{s}.
		\end{aligned}
	\end{equation}
	In \eqref{equation:diffesta}, we made use of the standing assumption $m \geq 2$, which provides the uniform bound (\cf~\eqref{equation:estimatee+-l})
	\begin{equation*}
		\|j^{+,2}\|_{1,\alpha,\mathcal{E}} + \|j^{-,2}\|_{1,\alpha,\mathcal{E}} \leq C.
	\end{equation*}
	Consequently, for all $t \in [0,1]$, we can infer from \eqref{equation:diffesta} that
	\begin{equation}\label{equation:indbase}
		\begin{aligned}
			\|\xmcal{F}(\widetilde{\xvec{z}}^{+,1}, \widetilde{\xvec{z}}^{-,1})-\xmcal{F}(\widetilde{\xvec{z}}^{+,2}, \widetilde{\xvec{z}}^{-,2})\|_{\widetilde{\xY}}(t) & = \| \xvec{Z}^{+} \|_{1,\alpha,\mathcal{E}}(t) + \| \xvec{Z}^{-} \|_{1,\alpha,\mathcal{E}}(t) \\
			& \leq C \int_0^t \|(\widetilde{\xvec{z}}^{+,1}, \widetilde{\xvec{z}}^{-,1})-(\widetilde{\xvec{z}}^{+,2}, \widetilde{\xvec{z}}^{-,2})\|_{\widetilde{\xY}}(s)  \, \xdx{s} \\
			& \leq Ct \|(\widetilde{\xvec{z}}^{+,1}, \widetilde{\xvec{z}}^{-,1})-(\widetilde{\xvec{z}}^{+,2}, \widetilde{\xvec{z}}^{-,2})\|_{\xY},
		\end{aligned}
	\end{equation}
	where $\|\cdot\|_{\widetilde{\xY}}$ is the norm
	\[
		\|(\xvec{f},\xvec{g})\|_{\widetilde{\xY}} \coloneq  \| \xvec{f} \|_{1,\alpha,\mathcal{E}} + \| \xvec{g} \|_{1,\alpha,\mathcal{E}}.
	\]
	
	Let us repeat the arguments that led to \eqref{equation:diffesta} and \eqref{equation:indbase}, but now with $(\widetilde{\xvec{z}}^{+,i}, \widetilde{\xvec{z}}^{-,i})$ being replaced by $\xmcal{F}(\widetilde{\xvec{z}}^{+,i}, \widetilde{\xvec{z}}^{-,i})$ for $i \in \{1,2\}$. Owing to the estimate \eqref{equation:indbase}, it follows for all $t \in [0,1]$ that
	\begin{multline*}\label{equation:secondstep}
		\|\xmcal{F}^2(\widetilde{\xvec{z}}^{+,1}, \widetilde{\xvec{z}}^{-,1})-\xmcal{F}^2(\widetilde{\xvec{z}}^{+,2}, \widetilde{\xvec{z}}^{-,2})\|_{\widetilde{\xY}}(t) = \|\xmcal{F}(\xmcal{F}(\widetilde{\xvec{z}}^{+,1}, \widetilde{\xvec{z}}^{-,1}))-\xmcal{F}(\xmcal{F}(\widetilde{\xvec{z}}^{+,2}, \widetilde{\xvec{z}}^{-,2}))\|_{\widetilde{\xY}}(t)\\
		\begin{aligned}
			& \leq C \int_0^t \|\xmcal{F}(\widetilde{\xvec{z}}^{+,1}, \widetilde{\xvec{z}}^{-,1})-\xmcal{F}(\widetilde{\xvec{z}}^{+,2}, \widetilde{\xvec{z}}^{-,2})\|_{\widetilde{\xY}}(s)  \, \xdx{s} \\
			& \leq C^2 \|(\widetilde{\xvec{z}}^{+,1}, \widetilde{\xvec{z}}^{-,1})-(\widetilde{\xvec{z}}^{+,2}, \widetilde{\xvec{z}}^{-,2})\|_{\xY} \int_0^t s \, \xdx{s}\\
			& \leq \frac{(Ct)^2}{2} \|(\widetilde{\xvec{z}}^{+,1}, \widetilde{\xvec{z}}^{-,1})-(\widetilde{\xvec{z}}^{+,2}, \widetilde{\xvec{z}}^{-,2})\|_{\xY}.
		\end{aligned}
	\end{multline*}
	Via induction over $n \in \mathbb{N}$, one likewise achieves
	\begin{equation*}
		\|\xmcal{F}^n(\widetilde{\xvec{z}}^{+,1}, \widetilde{\xvec{z}}^{-,1})-\xmcal{F}^n(\widetilde{\xvec{z}}^{+,2}, \widetilde{\xvec{z}}^{-,2})\|_{\widetilde{\xY}}(t)
		\leq \frac{(Ct)^n}{n!} \|(\widetilde{\xvec{z}}^{+,1}, \widetilde{\xvec{z}}^{-,1})-(\widetilde{\xvec{z}}^{+,2}, \widetilde{\xvec{z}}^{-,2})\|_{\xY},
	\end{equation*}
	for all $t \in [0,1]$. Therefore, for each $n \in \mathbb{N}$, we arrive at
	\begin{equation}\label{equation:contrest}
		\|\xmcal{F}^n(\widetilde{\xvec{z}}^{+,1}, \widetilde{\xvec{z}}^{-,1})-\xmcal{F}^n(\widetilde{\xvec{z}}^{+,2}, \widetilde{\xvec{z}}^{-,2})\|_{\xY}
		\leq \frac{C^n}{n!} \|(\widetilde{\xvec{z}}^{+,1}, \widetilde{\xvec{z}}^{-,1})-(\widetilde{\xvec{z}}^{+,2}, \widetilde{\xvec{z}}^{-,2})\|_{\xY}.
	\end{equation}
	As the constant $C > 0$ depends only on fixed objects, the estimate~\eqref{equation:contrest} provides a number $N \geq 1$ for which $\xmcal{F}^n$ with~$n \geq N$ is a contraction.
	
	\subsubsection{Initial data bounds}\label{subsection:initialdatabounds}
	For~$\delta^* \in (0,\nu]$ and sufficiently large $k = k(\delta^*) > 0$, let~$\xmcal{F}_{\delta^*}$ denote the mapping~$\xmcal{F}$ defined on~$\xX_{\delta^*,k}$. To determine $0 < \delta_0 < \dots < \delta_K$, we start with $\delta^* = \nu/3C_*$ and choose a small $\delta_K > 0$ such that $\|\xvec{z}^{\pm}_0\|_{m,\alpha,\mathcal{E}} < \delta_K$ facilitates a fixed point of~$\xmcal{F}_{\nu/3C_*}$ in the~$\|\cdot\|_{\xY}$-closure~of~$\xX_{\nu/3C_*,k(\nu/3C_*)}$ for both cases $\xvec{H}_0 \neq \xsym{0}$ and $\xvec{H}_0 = \xsym{0}$. Next, after selecting $\delta^* = \delta_K/6C_*$, one finds $\delta_{K-1} > 0$ yielding a unique fixed point of~$\xmcal{F}_{\delta_K/6C_*}$ in the $\|\cdot\|_{\xY}$-closure of $\xX_{\delta_K/6C_*, k(\delta_K/6C_*)}$ for $\|\xvec{z}^{\pm}_0\|_{m,\alpha,\mathcal{E}} < \delta_{K-1}$. Subsequently, we obtain $\delta_{K-2}$, then~$\delta_{K-3}$, and, after a finite number of steps, reach a good bound~$\delta_0 > 0$. 
	
	\subsection{Proof of \Cref{theorem:tkloc}}\label{subsection:proofTKLOC}
	\subsubsection{Version 1}\label{subsection:proofA}
	
	We fix $(\xvec{V}, \xvec{H}, P, \xsym{\Xi})$ by applying \Cref{theorem:ret} with initial data $(\xvec{V}_0, \xvec{H}_0) = (\widetilde{\xvec{u}}_0, \widetilde{\xvec{B}}_0)$, emphasizing that
	\begin{equation*}
		\xvec{V}, \xvec{H} \in \xCzero([0,1];\xCn{{m-1,\alpha}}(\overline{\mathcal{E}}; \mathbb{R}^2)) \cap \xLinfty((0,1);\xCn{{m,\alpha}}(\overline{\mathcal{E}}; \mathbb{R}^2)).
	\end{equation*}
	The strategy is now as follows. First, the magnetic field $\xvec{H}$ is split into two localized parts that solve individual induction problems with source terms. Second, to define the controlled trajectory $(\widetilde{\xvec{u}}, \widetilde{\xvec{B}}, \widetilde{p})$, the separated trajectories from the first step are recombined after the time $t = 1/K - K_0$ in a way that deletes the magnetic field in a neighborhood of $\Lambda$; here $K_0\in(0,1/2K)$ is the number introduced above \eqref{equation:beta} such that $\xvec{y}^*(\cdot,t) = \xsym{0}$ for all~$t \in [1/K-2K_0,1/K]$. Third, the property \eqref{equation:suppcondB} is verified, and it is observed that $\xvec{H}$ evolves during $[0, 1/K-K_0]$ in a way that facilitates \eqref{equation:suppcondB3}.

	\paragraph{Step 1. Trajectory splitting.} As shown below \eqref{equation:consvauxe}, one can write $\xvec{H} = \xnab^{\perp} \psi$ with $\psi(\xvec{x}, t) = 0$ for all~$(\xvec{x}, t) \in \partial \mathcal{E} \times [0,1]$.
	Further, let $\{\mu_1, \mu_2\} \subset \xCinfty_0(\mathbb{R}^2;\mathbb{R})$ be the partition of unity from \eqref{equation:partitionofunity} and fix
	\begin{equation*}
		\xvec{H}^1,\xvec{H}^2 \in \xCzero([0,1/K];\xCn{{m-1,\alpha}}(\overline{\mathcal{E}}; \mathbb{R}^2)) \cap \xLinfty((0,1/K);\xCn{{m,\alpha}}(\overline{\mathcal{E}}; \mathbb{R}^2))
	\end{equation*}
	by means of
	\begin{equation}\label{equation:H12_version_b}
		\xvec{H}^j(\xvec{x}, t) \coloneq \xnab^{\perp}(\mu_j(\xvec{x})\psi(\xvec{x}, t)), \quad (\xvec{x}, t) \in \overline{\mathcal{E}} \times [0,1/K], \quad \quad j \in \{1,2\}.
	\end{equation}
	Because $\mu_1$ and $\mu_2$ obey \eqref{equation:partitionofunity}, it holds $\xvec{H} = \xvec{H}^1 + \xvec{H}^2$. Furthermore, in view of \eqref{equation:H12_version_b}, and since $\mu_1\psi = \mu_2\psi = 0$ on $\partial \mathcal{E} \times [0,1]$, one has 
	\begin{equation}\label{equation:divfrcstnatb}
		\xdiv{\xvec{H}^1} = \xdiv{\xvec{H}^2} = 0 \mbox{ in } \mathcal{E}\times[0,1/K], \quad \xvec{H}^1 \cdot \xvec{n} = \xvec{H}^2\cdot \xvec{n} = 0 \mbox{ on } \partial \mathcal{E}\times[0,1/K].
	\end{equation}
	Thus, by employing the equations for~$\xvec{V}$ and~$\xvec{H}$ in \eqref{equation:MHD00unforcedlargedomain}, one can verify that~$\xvec{H}^1$ and~$\xvec{H}^2$ satisfy the respective initial boundary value problems
	\begin{equation}\label{equation:MHD00unforcedtrsp_0_version_b}
		\begin{cases}
			\partial_t \xvec{H}^{j} + (\xvec{V} \cdot \xsym{\nabla}) \xvec{H}^{j} - (\xvec{H}^{j} \cdot \xsym{\nabla}) \xvec{V} = \widehat{\xsym{\eta}}^{j}  & \mbox{ in } \mathcal{E} \times (0,1/K),\\
			\xdiv{\xvec{H}^j} = 0 & \mbox{ in } \mathcal{E} \times (0,1/K),\\
			\xvec{H}^j \cdot \xvec{n} = 0 & \mbox{ on } \partial\mathcal{E} \times (0,1/K),\\
			\xvec{H}^{j}(\cdot, 0)  =  \xnab^{\perp}(\mu_j \psi(\cdot, 0))  & \mbox{ in } \mathcal{E},
		\end{cases}
	\end{equation}
	where the source terms are for $j \in \{1,2\}$ given by
	\begin{equation}\label{equation:etahat}
		\widehat{\xsym{\eta}}^{j}  \coloneq \partial_t \xvec{H}^{j} + (\xvec{V} \cdot \xsym{\nabla}) \xvec{H}^{j} - (\xvec{H}^{j} \cdot \xsym{\nabla}) \xvec{V}.
	\end{equation}
	In addition, from the equation for $\xvec{H}$ in \eqref{equation:MHD00unforcedlargedomain}, and also \cref{equation:H12_version_b,equation:divfrcstnatb,equation:etahat}, it can be inferred that
	\begin{equation}\label{equation:prpetahatj1}
		\begin{gathered}
			\widehat{\xsym{\eta}}^{j} \in \xCzero([0,1/K];\xCn{{m-2,\alpha}}(\overline{\mathcal{E}};\mathbb{R}^2)) \cap \xLinfty((0,1/K);\xCn{{m-1,\alpha}}(\overline{\mathcal{E}};\mathbb{R}^2)),\\
			\operatorname{supp}(\widehat{\xsym{\eta}}^{j}) \subset (\mathcal{O}_1 \cap \mathcal{O}_2) \times [0, 1/K], \quad 
			\widehat{\xsym{\eta}}^{1} + \widehat{\xsym{\eta}}^{2} = 0,\\
			\xdiv{\widehat{\xsym{\eta}}^j} = 0 \mbox{ in } \mathcal{E}, \quad \widehat{\xsym{\eta}}^j \cdot \xvec{n} = 0 \mbox{ on } \partial \mathcal{E}, \quad
			\widehat{\xsym{\eta}}^j = \xnab^{\perp} \widehat{\phi}_j, \quad \widehat{\phi}_j = 0 \mbox{ on } \partial \mathcal{E},
		\end{gathered}
	\end{equation}
	which also involves the identities $(\xvec{V} \cdot \xnab)\xvec{H}^j - (\xvec{H}^j \cdot \xnab) \xvec{V} = \xnab^{\perp} (\xvec{H}^j \wedge \xvec{V})$ for $j \in \{1,2\}$.

	\paragraph{Step 2. Controlled solutions.} Given the previous constructions, we define a controlled trajectory of \eqref{equation:MHD00SimplyConnectedReducedsupp} via
	\begin{equation}\label{equation:controlsdef_version_b}
		\begin{gathered}
			\widetilde{\xvec{u}} \coloneq \xvec{V}, \quad \widetilde{\xvec{B}} \coloneq  \beta\xvec{H}^{1}+\xvec{H}^{2}, \quad \widetilde{p} \coloneq P, \quad
			\widetilde{\xsym{\eta}} \coloneq \xdrv{\beta}{t} \xvec{H}^1 + \beta \widehat{\xsym{\eta}}^1 + \widehat{\xsym{\eta}}^2,\\
			\widetilde{\xsym{\xi}} \coloneq \xsym{\Xi} + (1-\beta)(\xvec{H}^{1} \cdot \xnab) \xvec{H}^{2} + (1-\beta)(\xvec{H}^{2} \cdot \xnab) \xvec{H}^{1}  + (1-\beta^2)(\xvec{H}^{1} \cdot \xnab) \xvec{H}^{1}.
		\end{gathered}
	\end{equation}
	where the form of the controls is dictated by the induction and momentum equations in \eqref{equation:MHD00SimplyConnectedReducedsupp}.
	The proclaimed regularity of $(\widetilde{\xvec{u}}, \widetilde{\xvec{B}}, \widetilde{p}, \widetilde{\xsym{\xi}}, \widetilde{\xsym{\eta}})$ is inherited from~$(\xvec{V}, \xvec{H}, P, \xsym{\Xi})$ (\cf~\Cref{theorem:ret}) and the properties of~$(\widehat{\xsym{\eta}}^{1}, \widehat{\xsym{\eta}}^{2})$ from~\eqref{equation:etahat}; in particular, 
	\begin{gather*}
		\widetilde{\xvec{u}}, \widetilde{\xvec{B}} \in \xCn{0}([0,1/K];\xCn{{m-1,\alpha}}(\overline{\mathcal{E}}; \mathbb{R}^2)) \cap \xLinfty((0,1/K);\xCn{{m,\alpha}}(\overline{\mathcal{E}}; \mathbb{R}^2)),\\
		\widetilde{\xsym{\xi}}, \widetilde{\xsym{\eta}} \in \xCzero([0,1/K];\xCn{{m-2,\alpha}}(\overline{\mathcal{E}};\mathbb{R}^2)) \cap \xLinfty((0,1/K);\xCn{{m-1,\alpha}}(\overline{\mathcal{E}};\mathbb{R}^2)).
	\end{gather*}
	Now, the estimates in~\eqref{equation:scondB} follow after recalling the form of $(\widetilde{\xvec{u}}, \widetilde{\xvec{B}})$ given in \eqref{equation:controlsdef_version_b}, the expression of $\xvec{H}^2$ in \eqref{equation:H12_version_b}, the definition of~$\beta$ in \eqref{equation:beta}, the choice of~$C_{*} \geq 1$ with \eqref{equation:inequality1}, the property $\xvec{y}^*(\cdot, 1/K) = \xsym{0}$ from \Cref{lemma:flushing}, and the estimate
	\[
	\max_{t\in[0,1]} \left(\|\xvec{V}-\xvec{y}^*\|_{m,\alpha,\mathcal{E}}(t) + \|\xvec{H}\|_{m,\alpha,\mathcal{E}}(t)\right) < \delta_{l+1}/3C_*
	\]
	from \Cref{theorem:ret}. 
	Further, as~$\xsym{\Xi}$ is given by~\Cref{theorem:ret}, $\widehat{\xsym{\eta}}^1$ and $\widehat{\xsym{\eta}}^2$ obey \eqref{equation:prpetahatj1}, and because~$\xvec{H}^1$ and~$\xvec{H}^2$ are defined via the partition of unity~$\{\mu_1, \mu_2\}$ from \eqref{equation:partitionofunity}, which is subordinate to the covering~$\{\mathcal{O}_1, \mathcal{O}_2\}$ specified in \eqref{equation:coveringproperties}, it holds
	\[
	\operatorname{supp}(\widetilde{\xsym{\xi}}) \subset \omegaup\times[0,1/K], \quad  \operatorname{supp}(\widetilde{\xsym{\eta}}) \subset \omegaup\times[1/K-K_0, 1/K].
	\]
	Resorting to \cref{equation:H12_version_b,equation:etahat,equation:prpetahatj1,equation:controlsdef_version_b}, accounting for the boundary values of $\psi$, and utilizing a stream function for $\widetilde{\xsym{\eta}}$, it can be shown that
	\begin{equation}\label{equation:etaprop_version_b}
		\xdiv{\widetilde{\xsym{\eta}}} = 0 \mbox{ in } \mathcal{E}, \quad \widetilde{\xsym{\eta}}\cdot \xvec{n} = 0 \mbox{ on } \partial \mathcal{E}, \quad
		\widetilde{\xsym{\eta}} = \xnab^{\perp} \widetilde{\phi}, \quad \widetilde{\phi} = 0 \mbox{ on } \partial \mathcal{E}.
	\end{equation}
	
	\paragraph{Step 3. The properties \eqref{equation:suppcondB} and \eqref{equation:suppcondB3}.} In order show that $\widetilde{\xvec{B}}$ has the annihilation property \eqref{equation:suppcondB}, we recall
	\begin{itemize}
		\item the definition $\widetilde{\xvec{B}} = \beta\xvec{H}^{1}+\xvec{H}^{2}$ from \eqref{equation:controlsdef_version_b};
		\item the definition of $\beta$ in \eqref{equation:beta}, which implies $\widetilde{\xvec{B}}(\cdot, 1/K) = \xvec{H}^2(\cdot, 1/K)$;
		\item $\operatorname{supp}(\widehat{\xsym{\eta}}^{2}) \subset (\mathcal{O}_1 \cap \mathcal{O}_2) \times [0, 1/K]$;
		\item $\operatorname{supp}(\xvec{H}^2(\cdot, 0)) \subset \mathcal{O}_2$ due to \eqref{equation:H12_version_b} and \eqref{equation:partitionofunity};
		\item the maximal dragging distance \eqref{equation:draggingproperty} of $\xvec{y}^*$ from \Cref{lemma:flushing}.
	\end{itemize}
	In particular, as $\xvec{H}^2$ is governed by \eqref{equation:MHD00unforcedtrsp_0_version_b}, one can infer from Lemmas~\Rref{lemma:nu} and~\Rref{lemma:mgftrsp} that the inclusion
	\begin{equation}\label{equation:etahatitmc}
		\operatorname{supp}(\xvec{H}^2(\cdot,t)) \subset \mathcal{N}_{\nu_0}\left(\xmcal{Y}^*(\mathcal{O}_2, 0, t)\right) \cup \bigcup_{s\in[0,t]} \mathcal{N}_{\nu_0}\left(\xmcal{Y}^*(\operatorname{supp}(\widehat{\xsym{\eta}}^2(\cdot,s)), s, t)\right)
	\end{equation}
	holds for any $t \in [0,1]$. Due to the above-listed points, and by the choice of $\nu_0$ (\cf~\Cref{remark:smallnu0}), one finds
	\[
	\operatorname{dist}(\operatorname{supp}(\xvec{H}^2(\cdot, 1/K)), \mathcal{N}_{d_{\Lambda}/2}(\Lambda)) > d_{\Lambda}.
	\]
	Hence, \eqref{equation:suppcondB} holds. To show the local flushing property \eqref{equation:suppcondB3}, let~$\mathcal{S}\subset\overline{\mathcal{E}}$ be any connected set with~$\mathcal{S}\cap\partial\mathcal{E} \neq \emptyset$, and assume that
	\[
	\widetilde{\xvec{B}}_0(\xmcal{Y}^*(\mathcal{N}_{2\nu_0}(\mathcal{S}),1/K,0)) = \{\xsym{0}\}.
	\]
	Then, the properties of $\xvec{y}^*$ from \Cref{lemma:flushing}, together with Lemmas~\Rref{lemma:nu} and \Rref{lemma:mgftrsp}, allow to infer that
	\[
		\widetilde{\xvec{B}}(\mathcal{N}_{\nu_0}(\mathcal{S}), 1/K-K_0) = \{\xsym{0}\},
	\]
	where we used that by \eqref{equation:controlsdef_version_b} it holds  $\widetilde{\xsym{\eta}}(\cdot, t) = \xsym{0}$ for $t \in [0, 1/K-K_0]$, followed by applying \Cref{lemma:mgftrsp} directly to the equation \eqref{equation:MHD00SimplyConnectedReducedsupp} satisfied by  $\widetilde{\xvec{B}}$. 
	In view of \eqref{equation:beta} and \eqref{equation:controlsdef_version_b}, this means that
	\[
		(\xnab^{\perp} \psi)(\mathcal{N}_{\nu_0}(\mathcal{S}), 1/K-K_0) = \{\xsym{0}\},
	\]
	which implies by using the boundary condition $\psi(\cdot, 1/K-K_0) = 0$ at $\partial \mathcal{E}$, and by also involving the hypothesis $\mathcal{S}\cap\partial\mathcal{E} \neq \emptyset$, that
	\begin{equation}\label{equation:strzprpv1}
		\psi(\mathcal{N}_{\nu_0}(\mathcal{S}), 1/K-K_0) = \{0\}, \quad \widehat{\xsym{\eta}}^2(\mathcal{N}_{\nu_0}(\mathcal{S})), 1/K-K_0) = \xsym{0}.
	\end{equation}
	Owing to the definition of $\widehat{\xvec{\eta}}^2$ in \eqref{equation:etahat}, the equation for $\xvec{H}^2$ in \eqref{equation:MHD00unforcedtrsp_0_version_b}, the inclusion in \eqref{equation:etahatitmc}, and $\xvec{y}^*(\cdot, t) = \xsym{0}$ for all $t \in [1/K-2K_0,1/K]$, one can conclude $\widehat{\xsym{\eta}}^2(\xvec{x}, t) = \xsym{0}$ for all~$\xvec{x} \in \mathcal{S}$ and~$t \in [1/K-K_0, 1/K]$. After resorting again to Lemmas~\Rref{lemma:flushing},~\Rref{lemma:nu} and~\Rref{lemma:mgftrsp}, but this time to the equation for $\xvec{H}^2$ in \eqref{equation:MHD00unforcedtrsp_0_version_b}, while using also \cref{equation:beta,equation:controlsdef_version_b,equation:strzprpv1}, ones arrives at
	\[
		\widetilde{\xvec{B}}(\mathcal{S}, 1/K) = \beta(1/K)\xvec{H}^{1}(\mathcal{S}, 1/K)+\xvec{H}^{2}(\mathcal{S}, 1/K) = \xvec{H}^2(\mathcal{S}, 1/K) = \{\xsym{0}\}.
	\]

	\subsubsection{Version 2}\label{subsection:proofB}
	Let $(\xvec{V}, \xvec{H}, P, \xsym{\Xi})$ be selected via \Cref{theorem:ret} with $(\xvec{V}_0, \xvec{H}_0) = (\widetilde{\xvec{u}}_0, \widetilde{\xvec{B}}_0)$, which provides
	\begin{equation}\label{equation:rrH}
		\xvec{V}, \xvec{H} \in \xCzero([0,1];\xCn{{m-1,\alpha}}(\overline{\mathcal{E}}; \mathbb{R}^2)) \cap \xLinfty((0,1);\xCn{{m,\alpha}}(\overline{\mathcal{E}}; \mathbb{R}^2)).
	\end{equation}
	First, the~initial magnetic field is separated into two parts: one supported in~$\mathcal{O}_1$ and the other in~$\mathcal{O}_2$. The magnetic field~$\xvec{H}$ is recovered from the evolution of individual contributions with localized data. Second, a regularity corrector is introduced. Third, a controlled solution to~\eqref{equation:MHD00SimplyConnectedReducedsupp} is assembled. Fourth, the support of $\widetilde{\xvec{B}}$ is discussed.
	\paragraph{Step 1. Induction problems with localized data.} Let $\psi_0$ be such that $\widetilde{\xvec{B}}_0 = \xnab^{\perp} \psi_0$ and $\psi_0 = 0$ on $\partial \mathcal{E}$. We split $\xvec{H} = \xvec{H}^1 + \xvec{H}^2$ with $\xvec{H}^1, \xvec{H}^2 \in \xCzero([0,1/K];\xCn{{m-2}}(\overline{\mathcal{E}};\mathbb{R}^2))$
	being defined as the solutions to the linear problems
	\begin{equation}\label{equation:MHD00unforcedtrsp_0}
		\begin{cases}
			\partial_t \xvec{H}^{j} + (\xvec{V} \cdot \xsym{\nabla}) \xvec{H}^{j} - (\xvec{H}^{j} \cdot \xsym{\nabla}) \xvec{V} = \xsym{0} & \mbox{ in } \mathcal{E} \times (0,1/K),\\
			\xdiv{\xvec{H}^j} = 0 & \mbox{ in } \mathcal{E} \times (0,1/K),\\
			\xvec{H}^j \cdot \xvec{n} = 0 & \mbox{ on } \partial\mathcal{E} \times (0,1/K),\\
			\xvec{H}^{j}(\cdot, 0)  =  \xnab^{\perp}(\mu_j \psi_0)  & \mbox{ in } 	\mathcal{E}. 
		\end{cases}
	\end{equation}
	Because the evolution equations in \eqref{equation:MHD00unforcedtrsp_0} preserve the initial  divergence (\eg, see \cite[Section 3.1]{Glass2000}) and the initial normal trace (one has $\partial_t (\xvec{H}^j \cdot \xvec{n}) = 0$ at $\partial \mathcal{E}$), one can conclude from
	\[
		\xdiv{\left(\xnab^{\perp}(\mu_j \psi_0)\right)} = 0 \mbox{ in } \mathcal{E}, \quad \xnab^{\perp}(\mu_j \psi_0) \cdot \xvec{n} = 0 \mbox{ on } \partial \mathcal{E}, \quad j \in \{1,2\}.
	\]
	that the problems in \eqref{equation:MHD00unforcedtrsp_0} are equivalent to
	\begin{equation}\label{equation:MHD00unforcedtrsp}
		\begin{cases}
			\partial_t \xvec{H}^{j} + (\xvec{V} \cdot \xsym{\nabla}) \xvec{H}^{j} = (\xvec{H}^{j} \cdot \xsym{\nabla}) \xvec{V}  & \mbox{ in } \mathcal{E} \times (0,1/K),\\
			\xvec{H}^{j}(\cdot, 0)  =  \xnab^{\perp}(\mu_j \psi_0)  & \mbox{ in } 	\mathcal{E}.
		\end{cases}
	\end{equation}
	The $\xCzero([0,1/K];\xCn{{m-2}}(\overline{\mathcal{E}};\mathbb{R}^2))$ regularity of the unique solutions $\xvec{H}^1$ and $\xvec{H}^2$ to~\eqref{equation:MHD00unforcedtrsp} follows from the representation \eqref{equation:moc} and the explanations given there. Moreover, as~$\xvec{H}$ and~$\xvec{H}^1+\xvec{H}^2$ both solve the same well-posed problem, it holds $\xvec{H} = \xvec{H}^1 + \xvec{H}^2$.
	\paragraph{Step 2. Regularity corrector.} To ensure that $\widetilde{\xvec{B}}$ vanishes in a relative neighborhood of $\Lambda$ at $t = 1/K$, one could define it as $\beta \xvec{H}^1 +\xvec{H}^2$, where $\beta$ is the smooth function from \eqref{equation:beta} with $\beta(t) = 1$ for $t \in [0, 1/K-K_0]$ and $\beta(t) = 0$ for $t \geq 1/K - K_0/2$. However, in this way, we could not ensure that $\widetilde{B}$ is contained in the desired space
	\begin{equation}\label{equation:regsp}
		\xCn{0}([0,1/K];\xCn{{m-1,\alpha}}(\overline{\mathcal{E}}; \mathbb{R}^2)) \cap \xLinfty((0,1/K);\xCn{{m,\alpha}}(\overline{\mathcal{E}}; \mathbb{R}^2)).
	\end{equation}
	Therefore, we aim to define $\widetilde{\xvec{B}}$ of the form $\beta \xvec{H}^1 +\xvec{H}^2 - \xvec{A}$ so that it belongs to the space in \eqref{equation:regsp}, and where the correction term~$\xvec{A}$ will have the representation
	\begin{equation}\label{equation:strfprta}
		\xvec{A} = \xnab^{\perp} \theta \, \mbox{ in } \mathcal{E}\times(0,1/K), \quad \theta = 0 \, \mbox{ on } \partial \mathcal{E}\times(0,1/K).
	\end{equation}
	During $[0, 1/K-K_0]$, the function $\beta\xvec{H}^1 + \xvec{H}^2$ equals $\xvec{H}$ and is sufficiently regular; thus, we will take $\xvec{A} = 0$ on $[0, 1/K - 2K_0]$, and for $t \geq 1/K-2K_0$ localize $\operatorname{supp}(\xvec{A}(\cdot, t))$ in a relative neighborhood of 
	\begin{equation}\label{equation:suppa}
		\mathscr{M} \coloneq \bigcup_{s \in [1/K-2K_0,1/K]}\operatorname{supp}(\xvec{H}^1(\cdot, s)) \cap \operatorname{supp}(\xvec{H}^2(\cdot,s)).
	\end{equation}
	To this end, let us fix $\chi \in \xCinfty(\overline{\mathcal{E}}; \mathbb{R})$ with $\chi = 1$ in $\mathcal{N}_{\nu_0/2}(\mathscr{M})$, $\chi = 0$ in $\overline{\mathcal{E}}\setminus \mathcal{N}_{\nu_0}(\mathscr{M})$, and such that (\cf~\eqref{equation:inequality0})
	\begin{equation}\label{equation:inequality}
		\|\xnab^{\perp}(\chi f)\|_{m, \alpha, \mathcal{E}} \leq C_* \|\xnab^{\perp}f\|_{m, \alpha, \mathcal{E}}
	\end{equation} 
	holds for all $f \in \xCn{{m+1,\alpha}}(\overline{\mathcal{E}}; \mathbb{R})$. As $\xvec{A}$ should obey \eqref{equation:strfprta}, we utilize the stream function representations $\xvec{H}^j = \xnab^{\perp} \widetilde{\theta}^j$ with $\widetilde{\theta}^1 = \widetilde{\theta}^2 = 0$ on $\partial \mathcal{E}\times(0,1/K)$
	and define
	\begin{equation}\label{equation:corrector}
		\xvec{A}(\xvec{x}, t) \coloneq  \xnab^{\perp}\left(\sigma(t)\left(\beta(t)\widetilde{\theta}^1(\xvec{x}, t)+\widetilde{\theta^2}(\xvec{x}, t)\right)\chi(\xvec{x})\right),
	\end{equation}
	where $\sigma\in\xCinfty(\mathbb{R};[0,1])$ is selected such that
	\[
	\sigma(t) = 0 \mbox{ when } t \leq 1/K-2K_0, \quad \sigma(t) = 1 \mbox{ when } t \geq 1/K-K_0.
	\]
	Since either $\xvec{H}(\xvec{x}, t) = \xvec{H}^1(\xvec{x}, t)$ or $\xvec{H}(\xvec{x}, t) = \xvec{H}^2(\xvec{x}, t)$ when $\xvec{x} \notin \mathscr{M}$ for $t \geq 1/K-K_0$, the property
	\begin{equation}\label{equation:rcprp}
		\beta \xvec{H}^1 + \xvec{H}^2 - \xvec{A} \in \xCn{0}([0,1/K];\xCn{{m-1,\alpha}}(\overline{\mathcal{E}}; \mathbb{R}^2)) \cap \xLinfty((0,1/K);\xCn{{m,\alpha}}(\overline{\mathcal{E}}; \mathbb{R}^2))
	\end{equation}
	follows now from (\cf~\cref{equation:beta,equation:corrector,equation:rrH})
	\begin{equation}\label{equation:rpa0}
		\begin{gathered}
			\xvec{H} \in \xCn{0}([0,1/K];\xCn{{m-1,\alpha}}(\overline{\mathcal{E}}; \mathbb{R}^2)) \cap \xLinfty((0,1/K);\xCn{{m,\alpha}}(\overline{\mathcal{E}}; \mathbb{R}^2)),\\
			\xvec{A} \in \xCzero([1/K-K_0, 1/K];\xCn{{m-1,\alpha}}(\overline{\mathcal{E}}\setminus\mathcal{N}_{\nu_0/2}(\mathscr{M}); \mathbb{R}^2)) \\ \cap \xLinfty((1/K-K_0, 1/K);\xCn{{m,\alpha}}(\overline{\mathcal{E}}\setminus\mathcal{N}_{\nu_0/2}(\mathscr{M}); \mathbb{R}^2)),\\
			\xvec{A} = \beta\xvec{H}^{1}+\xvec{H}^{2} \mbox{ in } \mathcal{N}_{\nu_0/2}(\mathscr{M}) \times [1/K-K_0, 1/K].
		\end{gathered}
	\end{equation}

	\paragraph{Step 3. Controlled solutions.} In view of \cref{equation:rrH,equation:rcprp,equation:rpa0} a controlled solution $(\widetilde{\xvec{u}}, \widetilde{\xvec{B}}, \widetilde{p},\widetilde{\xsym{\xi}}, \widetilde{\xsym{\eta}})$ to \eqref{equation:MHD00SimplyConnectedReducedsupp} is now given by
	\begin{equation}\label{equation:vmpd}
		\begin{gathered}
			\widetilde{\xvec{u}} \coloneq \xvec{V}, \quad \widetilde{\xvec{B}} \coloneq \beta\xvec{H}^{1}+\xvec{H}^{2} - \xvec{A}, \quad \widetilde{p} \coloneq P,\\
			\widetilde{\xsym{\eta}} \coloneq \xdrv{\beta}{t} \xvec{H}^1 - \partial_t \xvec{A} - (\xvec{V}\cdot\xnab)\xvec{A} + (\xvec{A}\cdot\xnab)\xvec{V},\\
			\widetilde{\xsym{\xi}} \coloneq \xsym{\Xi} + (1-\beta)(\xvec{H}^{1} \cdot \xnab) 	\xvec{H}^{2} + (1-\beta)(\xvec{H}^{2} \cdot \xnab) \xvec{H}^{1}  + (1-\beta^2)(\xvec{H}^{1} \cdot \xnab) \xvec{H}^{1} 
			\\
			+ (\xvec{A}\cdot \xnab) (\beta \xvec{H}^1 + \xvec{H}^2 - \xvec{A}) + (\beta \xvec{H}^1 + \xvec{H}^2) \cdot \xnab) \xvec{A}.
		\end{gathered}
	\end{equation}
	The representations $\xvec{A} = \beta\xvec{H}^{1}+\xvec{H}^{2}$ in $\mathcal{N}_{\nu_0/2}(\mathscr{M}) \times [1/K-K_0, 1/K]$ and $\xvec{H} = \xvec{H}^{1}+\xvec{H}^{2}$ in $\mathcal{E}\times[0,1/K]$, and also the equations \eqref{equation:MHD00unforcedtrsp} for $\xvec{H}^1$ and~$\xvec{H}^2$ and the equation for~$\xvec{H}$ from \eqref{equation:MHD00unforcedlargedomain}, provide together with \eqref{equation:vmpd} the regularity of the controls. In particular, 
	\begin{equation*}\label{equation:prregctrl}
		\begin{gathered}
			\widetilde{\xvec{u}}, \widetilde{\xvec{B}} \in \xCn{0}([0,1/K];\xCn{{m-1,\alpha}}(\overline{\mathcal{E}}; \mathbb{R}^2)) \cap \xLinfty((0,1/K);\xCn{{m,\alpha}}(\overline{\mathcal{E}}; \mathbb{R}^2)),\\
			\widetilde{\xsym{\xi}}, \widetilde{\xsym{\eta}} \in \xCzero([0,1/K];\xCn{{m-2,\alpha}}(\overline{\mathcal{E}};\mathbb{R}^2))\cap \xLinfty((0,1/K);\xCn{{m-1,\alpha}}(\overline{\mathcal{E}};\mathbb{R}^2)),\\
			\operatorname{supp}(\widetilde{\xsym{\xi}}) \subset \operatorname{supp}(\xvec{H}^1) \cup \operatorname{supp}(\xvec{A}) \cup \operatorname{supp}(\xsym{\Xi})\subset\omegaup\times[0, 1/K],\\
			\operatorname{supp}(\widetilde{\xsym{\eta}}) \subset \operatorname{supp}(\xvec{H}^1) \cup \operatorname{supp}(\xvec{A}) \subset \omegaup\times[1/K-2K_0, 1/K],
		\end{gathered}
	\end{equation*}
	where the inclusions for $\operatorname{supp}(\widetilde{\xsym{\xi}})$ and $\operatorname{supp}(\widetilde{\xsym{\eta}})$ are verified by inserting into \eqref{equation:vmpd} the following information: 1) $\operatorname{supp}(\xsym{\Xi})\subset\omegaup\times[0, 1/K]$ by \Cref{theorem:ret}; 2) according to \eqref{equation:beta}, it holds~$\beta = 1$ on $[0, 1/K-2K_0]$; 3) $\sigma(t) = 0$ for~$t\leq 1/K-2K_0$; 4) in view of $\nu_0$'s smallness (\cf~\Cref{remark:smallnu0}) and \cref{equation:suppa,equation:corrector}, the problem for $\xvec{H}^1$ in \eqref{equation:MHD00unforcedtrsp} provides with Lemmas~\Rref{lemma:nu} and \Rref{lemma:mgftrsp} the inclusions
	\[
		\operatorname{supp}(\xvec{H}^1)\subset\omegaup\times[0, 1/K], \quad \operatorname{supp}(\xvec{A})\subset\omegaup\times[1/K-2K_0, 1/K].
	\]
	Further, since $\overline{\mathcal{E}}\setminus\mathcal{N}_{\nu_0/3}(\mathscr{M})$ consists of disjoint regions where either~$\xvec{H}^1(\cdot,1/K) = \xsym{0}$ or $\xvec{H}^2(\cdot,1/K) = \xsym{0}$ (\cf~\eqref{equation:suppa}), and because $\xvec{H} = \xvec{H}^1 + \xvec{H}^2$, it can be inferred via \cref{equation:rrH,equation:rpa0,equation:vmpd} that
	\[
		\|\widetilde{\xvec{B}}\|_{\alpha,m,\mathcal{E}}(1/K) \leq 2C_*\|\xvec{H}\|_{\alpha,m,\mathcal{E}}(1/K).
	\]
	Then, \eqref{equation:scondB} can be concluded by using $\xvec{y}^*(\cdot, 1/K) = \xsym{0}$ and (\cf~\Cref{theorem:ret})
	\[
		\max_{t\in[0,1]} \left(\|\xvec{V}-\xvec{y}^*\|_{m,\alpha,\mathcal{E}}(t) + \|\xvec{H}\|_{m,\alpha,\mathcal{E}}(t)\right) < \delta_{l+1}/3C_*.
	\]
	Finally, the representation $\widetilde{\xsym{\eta}} = \xnab^{\perp} \widetilde{\phi}$ with $\widetilde{\phi} = 0$ at $\mathcal{E}\times[0,1/K]$ is inherited from~$\xvec{A}$ and~$\xvec{H}^1$, thereby ensuring that the $\xLtwo(\mathcal{E};\mathbb{R}^2)$-projection of $\widetilde{\xvec{B}}(\cdot, t)$ to $\xZ(\mathcal{E})$ vanishes for each $t \in [0,1/K]$.

	\paragraph{Step 4. The properties \eqref{equation:suppcondB} and \eqref{equation:suppcondB3}.} In view of \Cref{lemma:mgftrsp},~$\operatorname{supp}(\xvec{H}^1)$ and~$\operatorname{supp}(\xvec{H}^2)$ are transported along $\xvec{V}$, which is a small perturbation of~$\xvec{y}^*$ by \Cref{lemma:nu}. It further holds $\widetilde{\xvec{B}} = \xvec{H}^2(\cdot, 1/K)$ (\cf~\cref{equation:beta,equation:vmpd}). At any given time~$t\in[0,1/K]$, due to the properties of $\xvec{y}^*$ from~\Cref{lemma:flushing}, but also using \cref{equation:sn0,equation:coveringproperties}, one finds that $\operatorname{supp}(\xvec{H}^2(\cdot, t))$ keeps a distance of at least~$3d_{\Lambda}/2 - \nu_0$ to~$\Lambda$, noting that~$3d_{\Lambda}/2 - \nu_0 > d_{\Lambda}$ (\cf~\Cref{remark:smallnu0}).
	Therefore, after further taking into account that $\xvec{A}$ is supported in $\mathcal{N}_{\nu_0}(\mathscr{M})$ (\cf~\eqref{equation:corrector}), one arrives at \eqref{equation:suppcondB}.  
	To verify the property \eqref{equation:suppcondB3}, we fix any connected set~$\mathcal{S}\subset\overline{\mathcal{E}}$ with~$\mathcal{S}\cap\partial\mathcal{E} \neq \emptyset$ and assume that
	\[
	\widetilde{\xvec{B}}_0(\xmcal{Y}^*(\mathcal{N}_{2\nu_0}(\mathcal{S}),1/K,0)) = \{\xsym{0}\}.
	\]
	Since $\widetilde{\xvec{B}}_0 = \xnab^{\perp} \psi_0$ with $ \psi_0 = 0$ on $\partial \mathcal{E}$, and owing to $\mathcal{S}\cap\partial\mathcal{E} \neq \emptyset$, one has $\psi_0(\xvec{x}) = 0$ for all $\xvec{x} \in \xmcal{Y}^*(\mathcal{N}_{2\nu_0}(\mathcal{S}),1/K,0)$. This implies
	\begin{equation}\label{equation:idhida}
		\left[\xnab^{\perp}(\mu_j \psi_0)\right](\xmcal{Y}^*(\mathcal{N}_{2\nu_0}(\mathcal{S}),1/K,0)) = \{\xsym{0}\}, \quad j \in \{1,2\}.
	\end{equation}
	Let us recall that we have already fixed (above \eqref{equation:beta}) the small number $K_0 \in (0,1/2K)$ in a way that~$\xvec{y}^*(\cdot, t) = \xsym{0}$ for all $t \in [1/K-2K_0, 1/K]$. Thus, referring to \Cref{lemma:nu} and applying \Cref{lemma:mgftrsp} to the trajectories $\xvec{H}^1$ and $\xvec{H}^2$ for \Cref{equation:MHD00unforcedtrsp}, where the respective initial states $\xnab^{\perp}(\mu_1 \psi_0)$ and $\xnab^{\perp}(\mu_2 \psi_0)$ satisfy \eqref{equation:idhida},
	it follows that
	\begin{equation}\label{equation:idhida2}
		\xvec{H}^1(\mathcal{N}_{\nu_0}(\mathcal{S}), t) = \xvec{H}^2(\mathcal{N}_{\nu_0}(\mathcal{S}), t) = \{\xsym{0}\}
	\end{equation}
	for all $t \in [1/K-2K_0, 1/K]$. Hence, by the definition of the cutoff $\chi$ employed in \eqref{equation:corrector}, one obtains $\xvec{A}(\mathcal{S}, t) = \{\xsym{0}\}$ for all $t\in[0,1/K]$. This allows via \eqref{equation:vmpd} to conclude $\widetilde{\xvec{B}}(\mathcal{S}, 1/K) = \{\xsym{0}\}$.
	
	\bibliographystyle{abbrv}
	\bibliography{general_domain}
\end{document}